\def\eea{\end{eqnarray*}}
\newtheorem{thm}{Theorem}[section]
\newtheorem{prop}[thm]{Proposition}
\newtheorem{lemma}[thm]{Lemma}
\begin{document}
\renewcommand{\theequation}{\thesection.\arabic{equation}}

\title[3-d $m$-quasi Einstein manifolds with two Ricci eigenvalues]{Three dimensional $m$-quasi Einstein manifolds with degenerate Ricci tensor}

\author{Jongsu Kim and Jinwoo Shin}
\date{\today}

\address{Dept. of Mathematics, Sogang University, Seoul, Korea}
\email{jskim@sogang.ac.kr}

\address{Dept. of Mathematics, Sogang University, Seoul, Korea}
\email{shinjin@sogang.ac.kr}

\thanks{This research was supported by Basic Science Research Program through the National Research Foundation of Korea(NRF) funded by the Ministry of Science, ICT \& Future Planning (NRF-2017R1A2B4004460)}

\keywords{quasi Einstein metric, warped product Einstein metric, Codazzi tensor}

\subjclass[2010]{53C21, 53C25}

\begin{abstract}
In this article we give a classification of three dimensional $m$-quasi Einstein manifolds  with two distinct Ricci-eigen values. Our study provides explicit description of local and complete metrics and potential functions.
 We also describe the associated warped product Einstein manifolds in detail.

For the proof we present a Codazzi tensor on any three dimensional $m$-quasi Einstein manifold
and use geometric properties of the tensor which help to analyze the  $m$-quasi Einstein equation effectively. A technical advance over preceding studies is made by resolving the case
when the gradient $\nabla f$ of the potential function  is not a Ricci-eigen vector field.


\end{abstract}

\maketitle

\setcounter{section}{0}
\setcounter{equation}{0}

\section{Introduction}

An $m$-quasi Einstein (denoted by $m$-QE, below)  manifold is a Riemannian manifold $(M, g, f)$ with a smooth function $f$ which satisfies
\begin{equation}
Rc+\nabla df - \frac{1}{m}df\otimes df=\lambda g \label{quasidef}
\end{equation}
for a real constant $\lambda$, an integer $m>1$ and $Rc$ denotes the Ricci tensor of $(M,g)$.
This definition is coined so that the  warped product manifold $(M \times F, \ g_E:=g + e^{-\frac{2f}{m}} g_F)$, with an $m$-dimensional Einstein manifold $(F, g_F)$, is Einstein with  $Rc_{g_E} =  \lambda  g_E$.
 If $f$ is constant, then $g$ is Enstein and we call this space $(M, g, f={\rm const})$ {\em trivial}.
One or two dimensional $m$-QE  manifolds have been thoroughly understood early and presented in the section 9.J of \cite{Be}. An $m$-QE manifold is related to a gradient Ricci soliton, as the latter corresponds to the case $m=\infty$.

Recent progress in understanding gradient Ricci solitons further motivated  the study of $m$-QE manifolds. When $M$ is compact and $\lambda \leq 0$, $\ (M, g, f)$ is proved to be trivial \cite{KK,  HPW}.
 In dimension$\geq 3$ various characterizations, triviality or rigidity  have been studied under certain geometric conditions. Here we mention only those most related to our work. If $(M, g)$ is complete locally conformally flat, then it is shown to be locally a warped product over an interval near a regular point of $f$, \cite{CMMR}. The condition of harmonic Weyl tensor on $(M, g)$ was studied in \cite{HPW} and \cite{SJ},  while half-conformally-flat condition in \cite{Cat}, certain Bach flat condition  in \cite{CH, RR} and other rigidity results in \cite{CSW, Wa1}.

We note that not much is known about three dimensional (denoted by $3$-d, below)
 $m$-QE manifolds, though some characterizations were known in certain cases, for instance  when $(M, g)$ is locally conformally flat as mentioned above,  or homogeneous \cite{BRS}.

In this paper, we shall characterize 3-d $m$-QE manifolds with two distinct Ricci eigenvalues.
Our work is a generalization of Theorem 9.119 in \cite{Be} and \cite{Pa}. As locally conformally flat 3-d $m$-QE manifolds have degenerate Ricci  tensor, one may view that our work  also generalizes the 3-dimensional part of the work in \cite{CMMR}.
We prove;
\begin{thm} \label{maint1}
  Let $(M,g,f)$ be a 3-dimensional $m$-QE manifold with Ricci-eigen values $\lambda_1\neq\lambda_2=\lambda_3$. Then near each point in the open dense subset $\{ x   | \ \nabla f(x) \neq 0 \}$ of $M$, there exist local coordinates $(x_1,x_2,x_3)$ in which $(g,f)$ can be one of the following; for constants $ a \neq 0, \rho,\mu,k$ and functions $p:=p(x_3)$, $\eta:=\eta(x_1)$, $\tau:=\tau(x_1)$ and $h:=h(x_1)$,
\begin{enumerate}
  \item $g=dx_1^2+\eta^2(p')^2 dx_2^2+\eta^2 dx_3^2$  with   $ \ (p')^2+\rho p^2+\frac{2a}{m-1}p^{1-m}=\frac{\mu}{m-1}$ and $(\eta^{'})^2+\frac{\lambda}{m+2}\eta^2=\rho.$ 
     And $f=-m \cdot \ln \{ p \cdot \eta\}$.
     \smallskip
  \item  $g=p^2 dx_1^2+(p')^2 dx_2^2+dx_3^2$ with $(p')^2+\frac{\lambda}{m+2}p^2+\frac{2a}{m}p^{-m}=k$.
     And $f=-m \cdot \ln \{\tau \cdot p\}$  with
$(\tau^{'})^2+k(\tau)^2=\frac{\mu}{m-1}$.
     \smallskip
\item $g= dx_1^2 +      h(x_1)^2 \tilde{g},$ where
 $\tilde{g}$ is a metric of constant curvature on a domain with $x_2, x_3$ coordinates. $g$ is locally conformally flat.
\end{enumerate}

\end{thm}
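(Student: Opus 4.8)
The plan is to manufacture a canonical Codazzi tensor on $(M^{3},g)$, transfer to it the eigenvalue splitting $\lambda_{1}\neq\lambda_{2}=\lambda_{3}$ of $Rc$, invoke the structure theory for Codazzi tensors with two distinct eigenvalues to obtain adapted warped-product coordinates, and finally integrate the $m$-QE equation in those coordinates. All of this is carried out on the open dense set $\{\nabla f\neq 0\}$. As preparation I would record the first-order consequences of \eqref{quasidef} in dimension three: the trace identity $R+\Delta f-\frac{1}{m}|\nabla f|^{2}=3\lambda$, and, evaluating \eqref{quasidef} on $\nabla f$ together with $\nabla_{\nabla f}\nabla f=\frac12\nabla|\nabla f|^{2}$, the identity $Rc(\nabla f)=\lambda\nabla f-\frac12\nabla|\nabla f|^{2}+\frac{1}{m}|\nabla f|^{2}\nabla f$. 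Setting $w=e^{-f/m}$ rewrites \eqref{quasidef} as $\nabla^{2}w=\frac{w}{m}(Rc-\lambda g)$ and exhibits $(M,g)$ as the base of the warped product Einstein manifold $(M\times F,\,g+w^{2}g_{F})$, whose fiber Einstein constant $\mu=w\Delta w+(m-1)|\nabla w|^{2}+\lambda w^{2}$ is constant.

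Next I would produce the Codazzi tensor. Since $(M\times F,\,g+w^{2}g_{F})$ is Einstein its Cotton tensor vanishes, so its Schouten tensor is Codazzi on the total space; evaluating this Codazzi equation on directions tangent to $M$ and inserting the warped-product curvature formulas lets the identity descend to the vanishing of the Codazzi defect of an explicit symmetric $2$-tensor $T$ on $M$, built from $Rc$, $df\otimes df$ and $g$ with coefficients (and a conformal weight in $w$) chosen so that all terms cancel. Equivalently one checks the Codazzi property directly: differentiate $Rc=\lambda g-\nabla^{2}f+\frac{1}{m}df\otimes df$, antisymmetrize, and rewrite the resulting third-derivative term through the Ricci identity and the three-dimensional formula $R_{ijkl}=R_{ik}g_{jl}-\cdots$. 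The key feature of $T$ is that its eigenspaces coincide with those of $Rc$, so the hypothesis $\lambda_{1}\neq\lambda_{2}=\lambda_{3}$ endows $T$ with a simple eigenline $E_{1}$ and a two-dimensional eigenplane $E_{23}$ at every point of $\{\nabla f\neq 0\}$.

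With a Codazzi tensor of exactly two eigenvalues available, I would apply Derdzinski's structure theory: both eigendistributions are integrable, the leaves of $E_{23}$ are totally umbilic, each eigenvalue is constant along the leaves of the complementary distribution, and hence $g$ is locally a warped product adapted to the splitting, giving coordinates $(x_{1},x_{2},x_{3})$ in block-diagonal form with $E_{1}=\mathrm{span}\,\partial_{x_{1}}$ (equivalently $E_{23}=\{x_{1}=\mathrm{const}\}$). It then remains to feed this structure back into \eqref{quasidef}, separating cases by the position of $\nabla f$. When $\nabla f\in E_{1}$ or $\nabla f\in E_{23}$ the mixed components of $\nabla^{2}f$ vanish and \eqref{quasidef} reduces to ODEs for the warping functions and for $f$; integrating them, with the trace identity as a first integral, yields the separated forms $f=-m\ln(p\,\eta)$ and $f=-m\ln(\tau\,p)$ together with the stated first integrals such as $(p')^{2}+\rho p^{2}+\frac{2a}{m-1}p^{1-m}=\frac{\mu}{m-1}$ and $(\eta')^{2}+\frac{\lambda}{m+2}\eta^{2}=\rho$, producing cases (1) and (2). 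The subcase in which $T$ is proportional to $g$ and the $E_{23}$-leaves carry constant curvature is exactly the locally conformally flat case (3).

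The main obstacle is the genuinely tilted case, where $\nabla f$ is not an eigenvector of $Rc$; this is the configuration left open in earlier work. Here the mixed Hessian components $\nabla^{2}f(E_{1},E_{23})$ need not vanish and \eqref{quasidef} couples the base and leaf directions. The device that closes the argument is the Codazzi identity for $T$ itself: it forces the eigenvalue functions, and with them $R$ and $|\nabla f|^{2}$, to be constant along the $E_{23}$-leaves, and combined with the identity for $Rc(\nabla f)$ this constrains $\nabla f$ so strongly that $f$ must split as a sum of a function of $x_{1}$ and a function along the leaf, i.e.\ into the product forms $p\cdot\eta$ and $\tau\cdot p$ appearing in (1) and (2). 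Showing that this separation is forced, and that the enumeration of cases is exhaustive, is the technical heart of the argument.
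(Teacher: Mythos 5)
Your overall scaffolding --- passing to $w=e^{-f/m}$, exhibiting a Codazzi tensor whose eigenspaces agree with those of $Rc$, invoking Derdzinski's structure theory, and splitting cases by the position of $\nabla f$ --- is exactly the paper's strategy, but the step you designate as ``the device that closes the argument'' in the tilted case is false, and that case is precisely the technical heart that you then defer. Derdzinski's theorem (Lemma \ref{derdlem}(iii)) gives constancy along leaves only for the eigenvalue whose eigenspace has dimension greater than one: thus $\mu_2$ is constant on the $E_{23}$-leaves, but $\mu_1$ --- and hence $R$ and $|\nabla w|^2$ --- is not. Indeed the paper computes (Lemma \ref{elambda}) $E_3\lambda_1=\{2\lambda-(m+2)\lambda_1\}\frac{E_3w}{w}$, which is nonzero exactly in the tilted configuration where $E_3w\neq 0$. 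So there is no a priori constancy of $R$ or $|\nabla f|^2$ along leaves to exploit; what the paper does instead is derive the pointwise identity (\ref{core}), $(E_1\mu_2)\{E_i\mu_1+\Gamma_{11}^i(\mu_2-\mu_1)\}=0$, which forces the dichotomy $E_1\mu_2\neq 0$ (Section 4, producing case (1)) versus $E_1\mu_2=0$ (Section 5, producing case (2)); each branch then needs substantial further work (Lemma \ref{e1mu2not01} rules out $\alpha\neq 0$ in the first branch, Lemma \ref{LL61z} rules out $\alpha=0$ in the second) before the separation $w=p\cdot\eta$, respectively $w=\tau\cdot p$, and the stated first integrals can be extracted. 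None of this is supplied, or even correctly anticipated, in your proposal.

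The case bookkeeping is also wrong. Case (3) cannot arise as ``the subcase in which $T$ is proportional to $g$'': under the standing hypothesis $\lambda_1\neq\lambda_2$, a tensor of the form $\phi Rc+\psi g$ has two distinct eigenvalues everywhere, so that subcase is empty. In the paper, case (3) is exactly the Ricci-eigen case $\nabla w\parallel E_1$ (Lemma \ref{claim112b3}), whereas you assign that configuration to cases (1)--(2); the other Ricci-eigen case $\nabla w\in E_{23}$ (Section 7) yields only degenerate instances of (1) and (2), with $\eta$, respectively $\tau$, constant, while the general forms come from the tilted cases. Two smaller points in the same direction: your Codazzi tensor is described as built from $Rc$, $df\otimes df$ and $g$, but any genuine $df\otimes df$ term would destroy the property that $T$ and $Rc$ share eigenspaces precisely in the tilted case where you need it --- the paper's tensor $\mathcal{C}=w^{m+1}Rc+\frac{w^{m+1}}{2(m-1)}(2\lambda-mR)g$ has no such term; and Codazzi theory does not hand you warped-product coordinates, only integrability and umbilic leaves, i.e.\ the block form (\ref{e1f5}) --- obtaining a fully diagonal metric requires Lemmas \ref{claim112b8} and \ref{3c}, and case (2) is in fact not a warped product over the $E_1$-direction at all.
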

In this article we do not aim to study the case (3) of locally conformally flat spaces. Indeed, this study includes B\"{o}hm's work \cite{Bo} and may be still a big subject for research.

By Theorem \ref{maint1} one actually understands 3-d $m$-QE manifolds with one or two distinct Ricci-eigen values; see \cite{HPW} for a summary of $m$-QE manifolds which are also Einstein.

We present {\it complete} warped product Einstein manifolds associated to the m-QE manifolds of Theorem \ref{maint1}.  Detailed descriptions are in Section 8.
\begin{thm} \label{maint2}
 Any complete warped product Einstein manifold $(N, g_E)$ associated to a three dimensional m-QE manifold $(M, g)$  with exactly two distinct Ricci eigenvalues on an open dense subset of $M$
 is one of the followings; for an m-dimensional complete Einstein manifold $(F, g_F)$ with $Rc_{g_F} =  \mu g_F$,
\begin{equation} \label{440c1}
 {\rm (i)}   \ \   g_E  = dx_1^2+(\eta(x_1))^2 \{ (p'(x_3))^2dx_2^2+dx_3^2+ p^2 g_F \}, \hspace{4cm} \nonumber
\end{equation}
where $(\eta^{'})^2+\frac{\lambda}{m+2}\eta^2=\rho $ and $ \ \ (p^{'})^2+\rho p^2+\frac{2a}{m-1}p^{1-m}=\frac{\mu}{m-1}$, in one of the three cases:

{\rm (i-1)}  $\lambda<0$, $a>0$, $\rho  \leq 0$,

{\rm (i-2)} $\lambda<0$, $a<0$, $\rho <0$, $\mu\leq (m+1)\rho (\frac{a}{ \rho })^{\frac{2}{m+1}}$, $p(0)>(\frac{a}{ \rho })^{\frac{1}{m+1}}$,

{\rm (i-3)} $\lambda=\rho =0$, $a>0$.

\smallskip
\noindent And $g_E$ is on (a non-compact quotient  of) $\mathbb{R}^3\times F$.
\begin{equation} \label{440b1}
  {\rm (ii)}  \ \ g_E  = (p(x_3))^2 (dx_1^2 + \tau(x_1)^2 g_F )+     (p^{'})^2 dx_2^2+ dx_3^2,\hspace{5.2cm} \nonumber
\end{equation}
where $(p')^2+\frac{\lambda}{m+2}p^2+\frac{2a}{m}p^{-m}=k$ and $   \ \ ({\tau}')^2  + k({\tau})^2=\frac{\mu}{m-1}$,
in one of the two cases:

{\rm (ii-1)} $a>0$, $\lambda\leq 0$,

{\rm (ii-2)} $a<0$, $\lambda<0$, $\ p(0)>\left\{\frac{a(m+2)}{ \lambda}\right\}^{\frac{1}{m+1}}$, $ \ \ k\leq \frac{\lambda}{m}\left\{\frac{a(m+2)}{ \lambda}\right\}^{\frac{2}{m+1}}.$

\noindent Depending on conditions, $g_E$ can be on (a non-compact quotient of) $ \mathbb{R}^3\times F $, $ \ \mathbb{S}^{m+1} \times  \mathbb{R}^{2},  \ {\rm or} \ \mathbb{R}^{m+3}$.

 \medskip
\noindent {\rm (iii)} $g_E= dx_1^2 +      h(x_1)^2 \tilde{g} + e^{-\frac{2f}{m}} g_F$ where
 $\tilde{g}$ is a Riemannian metric of constant curvature on a $2$-dimensional domain with $x_2, x_3$ coordinates.

\end{thm}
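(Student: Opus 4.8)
The plan is to build directly on the local classification of Theorem \ref{maint1}. For each of its three normal forms of $(g,f)$ I would form the associated warped product $g_E=g+e^{-2f/m}g_F$ and first verify, by a one-line computation, that it reduces to the stated expression. In case (1), since $f=-m\ln\{p\eta\}$ one has $e^{-2f/m}=(p\eta)^2$, so
\[
g_E = dx_1^2+\eta^2(p')^2dx_2^2+\eta^2dx_3^2+p^2\eta^2 g_F = dx_1^2+\eta^2\{(p')^2dx_2^2+dx_3^2+p^2g_F\},
\]
which is exactly form (i); likewise case (2) gives form (ii) and case (3) gives form (iii). That $g_E$ is Einstein with $Rc_{g_E}=\lambda g_E$ is immediate from the very definition of an $m$-QE manifold recalled in the introduction, once $(F,g_F)$ is taken Einstein with the constant $\mu$ appearing in the first integrals. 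Hence the entire content of the theorem lies in the \emph{completeness} analysis and the identification of the global topology.

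For form (i), $g_E=dx_1^2+\eta(x_1)^2\,g_{\mathrm{in}}$ with $g_{\mathrm{in}}=(p')^2dx_2^2+dx_3^2+p^2g_F$, a singly warped product over the $x_1$-line, I would proceed in two stages. First I study the fiber $g_{\mathrm{in}}$, itself warped in the $x_3$-direction: the first integral $(p')^2+\rho p^2+\frac{2a}{m-1}p^{1-m}=\frac{\mu}{m-1}$ confines $p$, under the sign conditions of (i-1)--(i-3), to an interval bounded away from $0$, so $g_F$ never collapses, while at a critical point of $p$ (where $p'=0$) the $x_2$-circle shrinks and the $x_2$-$x_3$ surface closes smoothly like the origin of a plane; thus $g_{\mathrm{in}}$ is complete with the topology of $\mathbb{R}^2\times F$. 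Second, the outer equation $(\eta')^2+\frac{\lambda}{m+2}\eta^2=\rho$ shows that $\eta$ stays positive over a complete $x_1$-line in each subcase, so no further collapse occurs and the total space is $\mathbb{R}^3\times F$, up to the discrete quotient arising from periodicity in the $x_2,x_3$ directions. The role of (i-1)--(i-3) is precisely to delimit the $(\lambda,a,\rho,\mu)$-regimes in which both first integrals yield such globally defined, everywhere positive warping profiles.

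Form (ii), $g_E=p^2(dx_1^2+\tau^2g_F)+(p')^2dx_2^2+dx_3^2$, is an iterated warped product and is where the topology genuinely changes. Treating $x_3$ as the principal radial variable, I would study $p(x_3)$ through $(p')^2+\frac{\lambda}{m+2}p^2+\frac{2a}{m}p^{-m}=k$ and $\tau(x_1)$ through $(\tau')^2+k\tau^2=\frac{\mu}{m-1}$, the decisive issue being the boundary behavior of these two profiles. When $\tau$ is of sinusoidal type, the $(x_1,F)$-slab $dx_1^2+\tau^2g_F$ closes up at both ends, and for a smooth extension one needs $(F,g_F)$ to be a round sphere $\mathbb{S}^m$ with $\tau$ vanishing to first order with unit-speed derivative; then the $x_1$-$F$ part becomes a topological $\mathbb{S}^{m+1}$ and, with $p>0$ throughout, the total space is $\mathbb{S}^{m+1}\times\mathbb{R}^2$. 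When instead all warping functions close up at a single end one obtains $\mathbb{R}^{m+3}$, and when nothing collapses one gets $\mathbb{R}^3\times F$. The inequalities (ii-1)--(ii-2), together with the positivity threshold $p(0)>\{a(m+2)/\lambda\}^{1/(m+1)}$ in (ii-2), are exactly the algebraic translation of which of these three boundary behaviors is realized while keeping $p$ in the region where $(p')^2\ge 0$.

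I expect the main obstacle to be the smooth-closing analysis at the zeros of a warping function. Checking that a degenerate warped-product metric extends to a smooth \emph{and} complete metric there is not formal: one must verify that the collapsing fiber is a round sphere of the correct radius and that the warping function vanishes linearly with derivative $\pm 1$, and then confirm that no conical singularity survives. This is what simultaneously forces $(F,g_F)$ to be a genuine sphere in the sphere cases and pins down the sharp numerical thresholds in (i-2) and (ii-2). A secondary task is to show that the listed cases are exhaustive, by ruling out the remaining parameter ranges: there either the warping function reaches zero with the wrong first derivative, producing a cone or a metric boundary, or the radial variable covers only a finite length, so that no complete smooth metric results.
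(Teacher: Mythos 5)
Your skeleton coincides with the paper's: both routes pass through the normal forms of Theorem \ref{maint1}, observe that $g_E=g+e^{-2f/m}g_F$ takes the three displayed shapes, note that the Einstein property is built into the $m$-QE definition once $Rc_{g_F}=\mu g_F$ for the $\mu$ of Lemma \ref{kklem}, and then determine which parameter ranges give complete spaces and which topologies arise; your sphere-closing discussion for form (ii) with sinusoidal $\tau$, and the polar-coordinate closing of the $(x_2,x_3)$-surface at a root of $p'$, are exactly the mechanisms of Subsections \ref{se1} and \ref{se2}. The organizational difference is where the hard analysis lives: in the paper, completeness is settled \emph{before} Section 8 and at the level of the base $(M,g,w)$, by matching the profile ODEs with Kobayashi's equation (2.2) in \cite{Ko}, importing his list (I)--(VI) of parameter regimes, and proving the exclusion and existence statements (Lemmas \ref{420}, \ref{LL3k}, \ref{26}, \ref{comple22}, \ref{5p6}--\ref{5p8}) that culminate in Propositions \ref{pr47}, \ref{pr56} and \ref{cl025}; the Section 8 proof is then assembly plus topology identification. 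You propose instead to rederive the completeness conditions directly on the total space, which can be made to work but amounts to reproducing that ODE classification rather than citing it.

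Two of your deferred steps are genuine work, not routine verification, and this is where your outline is thin. First, exhaustiveness: excluding the regime $a>0$, $\lambda>0$, $k>\kappa_0$ in form (ii), where $p$ is periodic and non-constant, is not a local ``wrong first derivative'' check. There the smooth-closing conditions $p''(s_1)=-p''(s_2)\neq 0$ at the minimum and maximum of $p$, combined with the first integral, must be shown to force $(p_1-p_2)^2\bigl\{\sum_{k=1}^m k(m-k+1)\,p_1^{k-1}p_2^{m-k}\bigr\}=0$, a contradiction; this algebraic factorization argument is the content of Lemma \ref{5p8} and nothing in your plan anticipates needing it. Second, globalization: Theorem \ref{maint1} is purely local (valid near points of the open dense set $\{\nabla f\neq 0\}$), while completeness is global, so before any parameter analysis you must show that on a complete space the local profiles $p$, $\eta$, $\tau$ extend to solutions defined on all of $\mathbb{R}$, bounded away from zero, with at most one critical point, and that the only identifications are the quotients allowed in the statement --- precisely what Lemmas \ref{26}, \ref{comple22}, \ref{5p6} and \ref{5p7} establish and what your proposal compresses into ``the first integral confines $p$.'' With those two items supplied, your argument closes and agrees with the paper's.
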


The starting point in proving Theorem \ref{maint1} and  Theorem \ref{maint2} is to observe the existence of a Codazzi tensor $  \mathcal{C}$ of the form $\phi Rc + \psi g$ associated to the 3-d $m$-QE equation. If $\lambda_1 \neq \lambda_2$ are Ricci-eigenvalues with multiplicity $1$ and $2$ respectively, letting $\mu_i:=\phi \lambda_i + \psi$, $i=1,2$, we have $ \mathcal{C}$-eigenvalues $\mu_1 \neq  \mu_2$ with the same eigenspaces as $\lambda_1, \lambda_2$ respectively.
Then the two dimensional $\lambda_2$-eigenspace of  $ \mathcal{C}$ forms an integrable and umbilic distribution \cite{De}.
This provides a good coordinate system, in which we can express
$g$ nicely. Some arguments and computations then reduce the  $m$-QE equation to solvable ordinary differential equations in possible cases.

When the metric is locally conformally flat, or more generally when it is of harmoic Weyl curvature or Bach-flat in higher dimension, the gradient $\nabla f$ of the potential function is readily a Ricci-eigen vector field. This fact crucially helps to resolve the $m$-QE equation in the aforementioned works. The same is true of the study for gradient Ricci solitons. It is harder to study the case when $\nabla f$  is not Ricci-eigen. So, a novelty of this article is to analyze the case  when $\nabla w$ is not Ricci-eigen, at least in dimension three.


We expect that the method explored for three dimensional $m$-QE manifolds in this article can be applied to other geometric equations.



\bigskip
The paper is organized as follows. In section 2 we present some properties of $m$-QE manifolds and find the wanted Codazzi tensor. Here by  $w=e^{-\frac{f}{m}}$, we transform $m$-QE manifold into (equivalent) $(\lambda,n+m)$-Einstein manifold $(M, g, w)$ satisfying {\rm (\ref{hpwdef})}.
In section 3, we pursue a local description of a $(\lambda,3+m)$-Einstein metric with two distinct Ricci-eigenvalues until we have to divide into four cases, to be treated in the next four sections.
For an {\it adapted} frame vector field $E_i$, $i=1,2,3$, we study in section 4 the case when $\nabla w$ is not a Ricci-eigen vector field and $E_1  \mu_2  \neq 0$,
  in section 5  when  $\nabla w$ is not  Ricci-eigen and $E_1  \mu_2  = 0$,
 in section 6  when $ \nabla w$ is  Ricci-eigen and $\frac{\nabla w}{|\nabla w |} = E_1$, and in section 7
  when $ \nabla w$ is  Ricci-eigen and $\frac{\nabla w}{|\nabla w |} = E_3$.
In section 8, we discuss the associated warped product Einstein manifolds, in particular complete ones,  and finishes the proofs of two main theorems.


\section{Three dimensional $m$-QE manifold}

 Let $(M,g, f)$ be an $n$ dimensional  $m$-QE manifold.
If we take  $w=e^{-\frac{f}{m}}$
in (\ref{quasidef}), then  we obtain the following equations which is called the $(\lambda,n+m)$-Einstein equation \cite{HPW};
\begin{equation}
  \nabla dw=\frac{w}{m}(Rc-\lambda g). \label{hpwdef}
\end{equation}
Upon this transformation between $w$ and $f$,
it is natural to assume $w>0$ on the interior of $M$
and    $w=0$  on the boundaray $\partial M$.

Although (\ref{quasidef}) and {\rm (\ref{hpwdef})} are essentially equivalent,
we prefer to use the latter for technical reason.

\begin{lemma}\label{kklem}  \cite{KK}, \cite[(2.1)]{HPW} and \cite[(3.12)]{CSW}
  Let $(M,g,w)$ be a $(\lambda,n+m)$-Einstein manifold. It holds that

 {\rm (i)} There is a constant $\mu$  such that
\begin{align}
  \mu=\frac{R+(m-n)\lambda}{m}w^2+(m-1)|\nabla w|^2. \label{consteq}
\end{align}

{\rm (ii)}  $ \   \frac{w}{2(m-1)}\nabla R  = -R(\nabla w, )+ \frac{(n - 1)\lambda - R }{ m-1}  g(\nabla w, ) .$
\end{lemma}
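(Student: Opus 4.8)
The plan is to obtain (ii) directly from the divergence of the defining equation (\ref{hpwdef}), and then to deduce (i) from (ii) by a short algebraic manipulation. Throughout I write (\ref{hpwdef}) in indices as $\nabla_i\nabla_j w=\frac{w}{m}(R_{ij}-\lambda g_{ij})$, and I fix a curvature convention so that the Bochner commutation for a gradient reads $\nabla^j\nabla_i\nabla_j w=\nabla_i(\Delta w)+R_{ij}\nabla^j w$ while the contracted second Bianchi identity reads $\nabla^j R_{ij}=\tfrac12\nabla_i R$.

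First I would trace (\ref{hpwdef}) with $g$: since $\operatorname{tr}_g Rc=R$ and $\operatorname{tr}_g g=n$, this gives $\Delta w=\frac{w}{m}(R-n\lambda)$. For (ii) I then apply the divergence $\nabla^j$ to (\ref{hpwdef}). On the left, symmetry of the Hessian and the Bochner commutation give $\nabla^j\nabla_i\nabla_j w=\nabla_i(\Delta w)+R_{ij}\nabla^j w$; inserting the traced equation for $\Delta w$ turns this into $\frac1m\big[(R-n\lambda)\nabla_i w+w\,\nabla_i R\big]+R_{ij}\nabla^j w$. On the right, the Leibniz rule together with $\nabla^j R_{ij}=\tfrac12\nabla_i R$ gives $\frac1m\big[R_{ij}\nabla^j w-\lambda\nabla_i w+\tfrac{w}{2}\nabla_i R\big]$. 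Equating the two sides and collecting the coefficients of $\nabla_i R$, of $R_{ij}\nabla^j w$, and of $\nabla_i w$ separately yields, after dividing by $m-1$, exactly the identity claimed in (ii).

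For (i) I would show $\nabla_i\mu=0$ for the stated $\mu$. Differentiating and using (\ref{hpwdef}) gives $(m-1)\nabla_i|\nabla w|^2=2(m-1)\nabla^j w\,\nabla_i\nabla_j w=\frac{2(m-1)w}{m}\big(R_{ij}\nabla^j w-\lambda\nabla_i w\big)$. I then substitute the value of $R_{ij}\nabla^j w$ supplied by (ii), which trades the Ricci contraction for terms in $\nabla_i R$ and $\nabla_i w$. In the total derivative $\nabla_i\mu$, the resulting $-\frac{w^2}{m}\nabla_i R$ cancels precisely against the $\frac{w^2}{m}\nabla_i R$ produced by differentiating $\frac{R+(m-n)\lambda}{m}w^2$; every remaining term is a multiple of $w\,\nabla_i w$, and its total coefficient is proportional to $(m-n)+(n-1)-(m-1)=0$. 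Hence $\nabla\mu\equiv0$, so $\mu$ is locally constant.

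The main obstacle is not conceptual but the consistent bookkeeping of signs: one must apply the Bochner commutator and the contracted Bianchi identity with the same curvature convention, since a single sign error in $R_{ij}\nabla^j w$ would spoil the cancellations in both parts. Once (ii) is established with the correct signs, (i) reduces to the clean numerical identity $(m-n)+(n-1)-(m-1)=0$, which is exactly what forces $\mu$ to be constant.
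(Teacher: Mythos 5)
Your proof is correct. Note, however, that the paper does not actually prove this lemma: it is stated with citations to \cite{KK}, \cite[(2.1)]{HPW} and \cite[(3.12)]{CSW}, with only the remark that (ii) follows from the identity in \cite{CSW} under the substitution $w=e^{-f/m}$. So your blind derivation fills in an argument the paper delegates to the literature, and it is essentially the standard one found there: tracing (\ref{hpwdef}) to get $\Delta w=\frac{w}{m}(R-n\lambda)$, taking the divergence of (\ref{hpwdef}) and combining the commutation $\Delta dw = d\Delta w + Rc(\nabla w,\cdot)$ with the contracted Bianchi identity $\operatorname{div}Rc=\frac12 dR$ to isolate $\frac{w}{2(m-1)}\nabla_i R = -R_{ij}\nabla^j w + \frac{(n-1)\lambda-R}{m-1}\nabla_i w$, and then differentiating the expression (\ref{consteq}) for $\mu$ and substituting this identity so that the $\nabla_i R$ terms cancel and the coefficient of $w\nabla_i w$ reduces to $\bigl[(m-n)+(n-1)-(m-1)\bigr]\lambda + (R-R)=0$. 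I verified the sign bookkeeping: with the convention that $Rc$ is positive on the round sphere, both the Bochner commutation and the Bianchi identity carry the signs you used, and the coefficients collect exactly as you claim. Two minor remarks: (a) your argument for (ii) is where (\ref{hpwdef}) enters twice (once traced, once divergence-d), so it genuinely uses the full equation, not just its trace, as it must; (b) $\nabla\mu\equiv 0$ gives a locally constant $\mu$, so the statement of (i) with a single constant implicitly uses connectedness of $M$, which is harmless here but worth stating.
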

Note that the above (ii) comes from \cite[(3.12)]{CSW} by the relation $w= e^{-\frac{f}{m}}$.
The constant $\mu$ is the Ricci curvature of the fiber $F$ of the warped product Einstein metric over $M$.

\medskip
We shall find a Codazzi tensor associated to a $(\lambda,3+m)$-Einstein manifold. This kind of Codazzi tensors is already found in 3-d static spaces \cite[Example 5]{DS}
and 3-d gradient Ricci solitons \cite{CMM}.

 \begin{lemma} \label{threesolx}
For a $(\lambda,3+m)$-Einstein manifold $(M, g, w)$,
\begin{align}
  \mathcal{C}=w^{m+1}Rc+\frac{w^{m+1}}{2(m-1)}(2\lambda-mR)g
\end{align}
 is a Codazzi tensor.

\end{lemma}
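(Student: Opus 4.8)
The plan is to verify the Codazzi identity $\nabla_i \mathcal{C}_{jk} = \nabla_j \mathcal{C}_{ik}$ in local coordinates; since $\mathcal{C}$ is symmetric it suffices to show that the skew part $A_{ijk} := \nabla_i \mathcal{C}_{jk} - \nabla_j \mathcal{C}_{ik}$ vanishes. Writing $\mathcal{C} = \phi\, Rc + \psi\, g$ with $\phi = w^{m+1}$ and $\psi = \frac{w^{m+1}}{2(m-1)}(2\lambda - mR)$, and using $\nabla g = 0$, I would expand by the Leibniz rule,
\[
A_{ijk} = (\nabla_i\phi) R_{jk} - (\nabla_j\phi) R_{ik} + \phi(\nabla_i R_{jk} - \nabla_j R_{ik}) + (\nabla_i\psi) g_{jk} - (\nabla_j\psi) g_{ik}.
\]
Here $\nabla_i\phi = (m+1)w^m \nabla_i w$, and using Lemma \ref{kklem}(ii) with $n=3$ to trade $\nabla R$ for $Rc(\nabla w)$ and $\nabla w$, a short computation reduces $\nabla_i\psi$ to the convenient form $m\, w^m R_{il}\nabla^l w - (\lambda + \tfrac{mR}{2}) w^m \nabla_i w$.

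The decisive input is an identity for the skew derivative of the Ricci tensor. Taking a covariant derivative of the Hessian equation (\ref{hpwdef}), namely $\nabla_i\nabla_j w = \tfrac{w}{m}(R_{ij} - \lambda g_{ij})$, and applying the Ricci commutation identity to the $1$-form $\nabla w$, I obtain
\[
\frac{w}{m}(\nabla_i R_{jk} - \nabla_j R_{ik}) = R_{ijkl}\nabla^l w - \frac{1}{m}\big[\nabla_i w\,(R_{jk} - \lambda g_{jk}) - \nabla_j w\,(R_{ik} - \lambda g_{ik})\big].
\]
Multiplying by $m\, w^m$ expresses $\phi(\nabla_i R_{jk} - \nabla_j R_{ik})$ through $R_{ijkl}\nabla^l w$ and first order terms in $\nabla w$. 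Since $\dim M = 3$ the Weyl tensor vanishes, so I would insert the three dimensional decomposition of the curvature tensor in terms of $Rc$, $R$ and $g$ to replace $R_{ijkl}\nabla^l w$ by a combination of $R_{jk}\nabla_i w$, of $(R_{il}\nabla^l w)\,g_{jk}$ and of $R\,\nabla_i w\, g_{jk}$, together with their $i \leftrightarrow j$ counterparts.

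Finally I would gather the three families of terms. The terms of type $\nabla_i w\, R_{jk} - \nabla_j w\, R_{ik}$ arise with magnitude $(m+1)$ from $(\nabla\phi)\,Rc$ and with magnitudes $m$ and $1$ from the Ricci part of $m\, w^m R_{ijkl}\nabla^l w$ and from its explicit $\nabla w$ bracket; in the correct sign convention they combine as $(m+1)-m-1 = 0$, which is exactly why the weight $w^{m+1}$ is forced. All surviving terms are proportional either to $(R_{il}\nabla^l w)g_{jk} - (R_{jl}\nabla^l w)g_{ik}$ or to $\nabla_i w\, g_{jk} - \nabla_j w\, g_{ik}$, and these cancel precisely against $(\nabla_i\psi)g_{jk} - (\nabla_j\psi)g_{ik}$ once the simplified form of $\nabla\psi$ above is used, giving $A_{ijk}=0$. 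I expect the main obstacle to be the bookkeeping in this last step: one must correctly match the $Rc(\nabla w)\otimes g$ contributions that appear both from contracting the three dimensional curvature tensor and from differentiating $R$ inside $\psi$ via Lemma \ref{kklem}(ii). The coefficient $\frac{1}{2(m-1)}(2\lambda - mR)$ of $g$ in $\mathcal{C}$ is engineered so that these matches are exact.
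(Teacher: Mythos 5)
Your proposal is correct and takes essentially the same route as the paper's own proof: both rest on the identical three ingredients --- differentiating the Hessian equation (\ref{hpwdef}) and applying the Ricci identity to get the skew derivative of $Rc$, substituting the three-dimensional curvature decomposition (vanishing Weyl tensor), and invoking Lemma \ref{kklem}(ii) to eliminate $\nabla R$ --- with the only difference being organizational, namely that you expand $\nabla_i\mathcal{C}_{jk}-\nabla_j\mathcal{C}_{ik}$ directly and show it vanishes, whereas the paper assembles the same cancellations into the Codazzi identity. I verified that your claimed simplification $\nabla_i\psi = m\,w^m R_{il}\nabla^l w - \bigl(\lambda + \tfrac{mR}{2}\bigr)w^m\nabla_i w$ and the coefficient count $(m+1)-m-1=0$ are both exactly right, so the argument closes.
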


\begin{proof}
The equation {\rm {\rm (\ref{hpwdef})}} in dimension $3$  gives
\begin{align*}
\nabla_i  \nabla_j  \nabla_k w  = \nabla_i  \{ \frac{w}{m} (R_{jk}  - \lambda g_{jk})   \}
 =  \frac{ \nabla_i w}{m}(R_{jk}  - \lambda g_{jk})   +\frac{w}{m}   \nabla_i R_{jk}.
\end{align*}

By Ricci identity,
\begin{align} \label{not01}
-mR_{ijkl} \nabla_l w =m\nabla_i  \nabla_j  \nabla_k w   - m\nabla_j  \nabla_i  \nabla_k w \hspace{3cm}  \\
   =   \nabla_i w  R_{jk} -\nabla_j w R_{ik}-    \nabla_i w \lambda  g_{jk}+\nabla_j w\lambda   g_{ik}  + w ( \nabla_i R_{jk}- \nabla_j R_{ik}). \nonumber
\end{align}

Meanwhile in dimension $3$,
\begin{eqnarray} \label{not02}
R_{ijkl}\nabla_l w = - (R_{ik} \nabla_j w + R_{jl}g_{ik}\nabla_l w - R_{il}g_{jk}\nabla_l w -  R_{jk}\nabla_i w) \\
+ \frac{R}{2}(g_{ik}\nabla_j w - g_{jk}\nabla_i w) \nonumber \hspace{3cm}
\end{eqnarray}
Adding (\ref{not01}) and $m \times $(\ref{not02}), we get
\begin{align*}
   0=& (m+1)R_{jk}\nabla_i w-(m+1)R_{ik} \nabla_j w - mR_{jl}g_{ik}\nabla_l w +m R_{il}g_{jk}\nabla_l w \\
   &+  w \nabla_i R_{jk}- w\nabla_j R_{ik}+(m \frac{R}{2}+ \lambda)g_{ik}\nabla_j w -(m \frac{R}{2}+ \lambda) g_{jk}\nabla_i w.
\end{align*}
  Multiplying this with $w^m$ and putting $  w^{m+1}  \nabla_i R_{jk} + (m+1)w^m\nabla_i w  R_{jk} = \nabla_i (w^{m+1}   R_{jk})$, we get
\begin{align}
0=\nabla_i (w^{m+1}   R_{jk}) - \nabla_j (w^{m+1}   R_{ik})
  +m (R_{il}g_{jk}- R_{jl}g_{ik})w^m \nabla_l w \label{16} \\
\qquad -(m \frac{R}{2}+ \lambda) g_{jk}w^m\nabla_i w+(m \frac{R}{2}+ \lambda)g_{ik}w^m \nabla_j w. \hspace{2cm} \nonumber
 \end{align}
From Lemma \ref{kklem} (ii), we have

$ \frac{-m}{2(m-1)}\nabla _i( R w^{m+1}g_{jk} )
=   mR_{il}g_{jk}w^{m}\nabla_l w
- m\frac{ R}{2} g_{jk}w^{m} \nabla_iw
-\frac{2m}{ m-1} \lambda g_{jk} w^{m}\nabla_i w.$

\noindent With this, (\ref{16}) becomes;
\begin{eqnarray} \label{eth010n}
\nabla_i (w^{m+1}   R_{jk}) - \nabla_j (w^{m+1}   R_{ik})
-\frac{m}{2(m-1)}\nabla _i( R w^{m+1}g_{jk} ) \hspace{1.3cm}\\
 \ \ \ \ +\frac{m}{2(m-1)}\nabla_j( R w^{m+1}g_{ik} )+\frac{ \lambda }{ m-1} \nabla_i (w^{m+1} g_{jk})
- \frac{\lambda }{ m-1} \nabla_j (w^{m+1} g_{ik}). \nonumber
 \end{eqnarray}

 From (\ref{eth010n}),
 $\mathcal{C}$ is a Codazzi tensor.
\end{proof}

Some properties of a Codazzi tensor are explained in \cite{De} or \cite[16.11]{Be};

\begin{lemma} \label{derdlem}
Let $A$ be a Codazzi tensor on a  Riemannian manifold $M$.
In a connected open subset of $M$ where the number of distinct eigenvalues of $A_x$, $x \in M$, is constant,

{\rm (i) } Given distinct eigenfunctions $\lambda, \mu$ of $A$ and local vector fields $v, u_1, u_2$ such that  $A v = \lambda v$, $Au_i = \mu u_i$, it holds that

$ \ \ \ \ \  v(\mu) g(u_1, u_2) = (\mu - \lambda) <\nabla_{u_1} u_2, v > $.

{\rm (ii)} For each eigenfunction $\lambda$, the $\lambda$-eigenspace distribution is integrable and its leaves are totally umbilic submanifolds of $M$.

{\rm (iii)} If $\lambda$-eigenspace $V_{\lambda}$ has dimension bigger than one, the eigenfunction $\lambda$ is constant along the leaves of $V_{\lambda}$.

{\rm (iv)} Eigenspaces of $A$ form mutually orthogonal differentiable distributions.
\end{lemma}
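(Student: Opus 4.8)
The plan is to treat (iv) first as the algebraic and regularity backbone, then to extract the single key identity (i) from the Codazzi equation combined with orthogonality of eigenspaces, and finally to read off (ii) and (iii) as formal consequences of (i). Throughout I view $A$ as a self-adjoint $(1,1)$-tensor, or equivalently a symmetric $(0,2)$-tensor, whose Codazzi property I use in the form $(\nabla_X A)(Y,Z)=(\nabla_Y A)(X,Z)$.

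For (iv), orthogonality is the usual spectral argument: if $Av=\lambda v$ and $Au=\mu u$ with $\lambda\neq\mu$, then $(\lambda-\mu)g(v,u)=g(Av,u)-g(v,Au)=0$ by self-adjointness, so $g(v,u)=0$. Differentiability of the eigenvalue functions and of the eigenspace distributions on the open set where the number of distinct eigenvalues is constant follows from standard perturbation theory, the spectral projections being smooth functions of $A$ there; I regard this regularity as the one genuinely non-algebraic point and would simply invoke it as in \cite{De}.

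For (i), I would differentiate $A$ along $v$ in the two directions $u_1,u_2$. Using $A(u_i,\cdot)=\mu\,g(u_i,\cdot)$ and metric compatibility, the expansion $(\nabla_v A)(u_1,u_2)=v(A(u_1,u_2))-A(\nabla_v u_1,u_2)-A(u_1,\nabla_v u_2)$ collapses, after the terms proportional to $\mu$ cancel, to $(\nabla_v A)(u_1,u_2)=v(\mu)\,g(u_1,u_2)$. The Codazzi property gives $(\nabla_v A)(u_1,u_2)=(\nabla_{u_1}A)(v,u_2)$; expanding the right-hand side, using $A(v,u_2)=0$ from the orthogonality of $V_\lambda$ and $V_\mu$, and using the differentiated relation $g(v,u_2)=0$ to replace $g(\nabla_{u_1}v,u_2)$ by $-g(v,\nabla_{u_1}u_2)$, the right-hand side becomes $(\mu-\lambda)\langle\nabla_{u_1}u_2,v\rangle$. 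Equating the two expressions is exactly (i).

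Then (ii) and (iii) follow. For integrability of $V_\lambda$, apply (i) with the distinguished vector $v\in V_\mu$ and the pair $u_1,u_2\in V_\lambda$; since the left side $v(\lambda)g(u_1,u_2)$ is symmetric in $u_1,u_2$, so is $\langle\nabla_{u_1}u_2,v\rangle$, whence $\langle[u_1,u_2],v\rangle=0$ for every $\mu\neq\lambda$, so $[u_1,u_2]\in V_\lambda$ and Frobenius applies. For umbilicity, the same instance of (i) evaluates the normal component of $\nabla_{u_1}u_2$ along a unit eigenvector $v_\alpha\in V_{\mu_\alpha}$ as $\frac{v_\alpha(\lambda)}{\lambda-\mu_\alpha}g(u_1,u_2)$, so the second fundamental form is $\mathrm{II}(u_1,u_2)=g(u_1,u_2)\,H$ with $H=\sum_\alpha\frac{v_\alpha(\lambda)}{\lambda-\mu_\alpha}v_\alpha$ independent of $u_1,u_2$. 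Finally, for (iii), repeating the expansion of $(\nabla_{u_1}A)(u_2,u_3)$ with all three vectors in $V_\lambda$ and using self-adjointness to write $A(\cdot,u_3)=\lambda g(\cdot,u_3)$ yields $(\nabla_{u_1}A)(u_2,u_3)=u_1(\lambda)g(u_2,u_3)$; Codazzi symmetry in $u_1,u_2$ then forces $u_1(\lambda)g(u_2,u_3)=u_2(\lambda)g(u_1,u_3)$, and when $\dim V_\lambda>1$, choosing an orthogonal pair inside $V_\lambda$ gives $u(\lambda)=0$ for every $u\in V_\lambda$. The main obstacle is essentially bookkeeping: keeping straight which slot of the Codazzi equation is the differentiation direction and correctly applying the differentiated orthogonality relations, with the regularity in (iv) as the sole analytic input.
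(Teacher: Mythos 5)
Your proof is correct, but there is nothing in the paper to compare it against step by step: the paper states Lemma \ref{derdlem} as a quotation from the literature, citing \cite{De} and \cite[16.11]{Be}, and gives no proof of its own. What you have written is a self-contained version of the standard argument behind those references, and every step checks out. The expansion $(\nabla_v A)(u_1,u_2)=v(\mu)\,g(u_1,u_2)$ (the terms proportional to $\mu$ cancel by self-adjointness and metric compatibility), the Codazzi swap $(\nabla_v A)(u_1,u_2)=(\nabla_{u_1}A)(v,u_2)$, and the differentiated orthogonality $g(\nabla_{u_1}v,u_2)=-g(v,\nabla_{u_1}u_2)$ together give exactly (i). Symmetry of the left-hand side of (i) in $u_1,u_2$ then forces $\langle[u_1,u_2],v\rangle=0$ for every eigenvector $v$ orthogonal to $V_\lambda$, which gives integrability via Frobenius; the same identity exhibits the second fundamental form of a leaf as $\mathrm{II}(u_1,u_2)=g(u_1,u_2)\,H$ with $H=\sum_\alpha\frac{v_\alpha(\lambda)}{\lambda-\mu_\alpha}v_\alpha$ independent of $u_1,u_2$, which is umbilicity; and the Codazzi symmetry $u_1(\lambda)g(u_2,u_3)=u_2(\lambda)g(u_1,u_3)$ applied to an orthogonal pair inside $V_\lambda$ gives (iii). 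The one input you do not prove, smoothness of the eigenvalues and eigendistributions on the set where the number of distinct eigenvalues is constant, is precisely the perturbation-theoretic point that the cited references also treat as standard, so delegating it to \cite{De} is consistent with the paper's own level of rigor. The net effect is that your proposal supplies an actual proof where the paper supplies only a citation.
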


\begin{lemma}\cite{HPW}
  Let $(M,g,w)$ be a $(\lambda,n+m)$-Einstein manifold. Then $g$ and $w$ are real analytic in harmonic coordinates where $w \neq 0$.
\end{lemma}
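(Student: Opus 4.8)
\emph{Proof proposal.} The plan is to show that, in harmonic coordinates, the pair $(g_{ij},w)$ solves a determined second order elliptic system with real-analytic coefficients, and then to invoke the analyticity theorem of Morrey for such systems. First I would fix harmonic coordinates: since $g$ is smooth and $w=e^{-f/m}$ is smooth, around any point there is a chart $(x^1,\dots,x^n)$ with $g^{ij}\Gamma^k_{ij}=0$ in which the components $g_{ij}$ are again smooth (DeTurck--Kazdan). In such a chart the Ricci tensor takes the quasilinear form
\[ R_{ij}=-\frac{1}{2}\,g^{ab}\partial_a\partial_b g_{ij}+B_{ij}(g,\partial g), \]
where $B_{ij}$ is a universal expression, polynomial in $\partial g$ and in the entries of $g^{-1}$, hence real-analytic in $(g,\partial g)$ wherever $g$ is positive definite.

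Next I would convert {\rm (\ref{hpwdef})} into two coupled equations. Substituting $R_{ij}=\frac{m}{w}\nabla_i\nabla_j w+\lambda g_{ij}$ into the formula above gives the metric equation
\[ -\frac{1}{2}\,g^{ab}\partial_a\partial_b g_{ij}-\frac{m}{w}\bigl(\partial_i\partial_j w-\Gamma^k_{ij}\partial_k w\bigr)=\lambda g_{ij}-B_{ij}(g,\partial g), \]
whose principal part in $g_{ij}$ is the diagonal operator $-\frac12 g^{ab}\partial_a\partial_b$, but which also carries the Hessian of the second unknown $w$. For the second equation I would take the trace of {\rm (\ref{hpwdef})}, namely $\mathrm{tr}_g(\nabla dw)=\frac{w}{m}(R-n\lambda)$, and eliminate $R$ by Lemma \ref{kklem}{\rm (i)}: solving that identity for $R=\frac{m}{w^2}\bigl(\mu-(m-1)|\nabla w|^2\bigr)-(m-n)\lambda$ and inserting yields
\[ g^{ab}\bigl(\partial_a\partial_b w-\Gamma^c_{ab}\partial_c w\bigr)=\frac{\mu-(m-1)|\nabla w|^2}{w}-\lambda w. \]
The decisive point is that this equation for $w$ contains no second derivatives of $g$; its right-hand side depends only on $w$, $\partial w$ and $g^{-1}$, and is real-analytic where $w\neq0$.

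Finally I would read off ellipticity and conclude. Ordering the $N+1$ unknowns as $(g_{ij},w)$ with $N=\frac{n(n+1)}{2}$, the principal symbol of the combined system at $\xi\neq0$ is block upper triangular: the metric block is $-\frac12|\xi|_g^2\,I_N$, the $w$ block is $|\xi|_g^2$, the lower-left block vanishes because the $w$-equation is free of $\partial^2 g$, and the only coupling is the term $-\frac{m}{w}\xi_i\xi_j$ in the upper-right block. Its determinant is therefore a nonzero multiple of $|\xi|_g^{2(N+1)}$, so the system is a determined elliptic system, and all its coefficients are real-analytic on the set where $g$ is positive definite and $w\neq0$. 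As the given solution is smooth, hence $C^{2,\alpha}$, Morrey's regularity theorem for elliptic systems with real-analytic coefficients applies and forces $g_{ij}$ and $w$ to be real-analytic in these harmonic coordinates.

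The main obstacle is structural rather than computational: arranging that the coupled system be genuinely elliptic. If one eliminated $R$ through its DeTurck--Kazdan trace instead, second derivatives of $g$ would re-enter the $w$-equation and entangle the two principal symbols into a full $2\times2$ block whose invertibility would have to be checked by hand. Using Lemma \ref{kklem}{\rm (i)} to express $R$ with no second derivatives removes this difficulty and produces the clean triangular symbol above; the remaining care lies only in confirming the a priori smoothness of $g$, of $w$, and of the harmonic chart, so that the hypotheses of Morrey's theorem are in force.
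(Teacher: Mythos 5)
Your proposal is correct, and it is essentially the standard proof of this fact: the paper itself offers no argument, deferring to \cite{HPW}, where analyticity is obtained in just this way --- harmonic coordinates, the DeTurck--Kazdan form of the Ricci tensor, the first integral of Lemma \ref{kklem}{\rm (i)} to rid the traced equation of all second derivatives of $g$, and Morrey's theorem applied to the resulting determined elliptic system. One sharpening of your final remark: if $R$ is instead kept in the $w$-equation via its expression in second derivatives of $g$, the Schur complement of the principal symbol equals $|\xi|_g^2-|\xi|_g^2=0$, so that alternative system is genuinely degenerate rather than merely awkward to check, which shows that invoking Lemma \ref{kklem}{\rm (i)} is essential and not just convenient.
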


So, if $w$ is not a constant then $\{\nabla w\neq 0\}$ is open and dense in $M$.
The eigenvalues of   $\mathcal{C}$ are real analytic in an open set  where the number of distinct eigenvalues of  $\mathcal{C}_x$ stays constant and $w \neq 0$.

\medskip
Finally we prove a basic Riemannian geometric property.
\begin{lemma} \label{claim112b8}
Let $(U, g)$ be  a 3-dimensional Riemannian manifold with an orthonormal frame field $E_1, E_2, E_3$ such that
the three distributions $E_{ij}$ spanned pointwise by $E_i$ and $E_j$ for $i \neq j$ are integrable.

\smallskip
Then for each point $p_0$ in $U$, there exists a neighborhood $V$ of $p_0$ in $U$ with coordinates $(x_1, x_2, x_3)$   such that
$g$ can be written on $V$ as
\begin{equation} \label{mtr1a}
 g= g_{11}dx_1^2 +   g_{22}  dx_2^2 +    g_{33} dx_3^2 ,
\end{equation}
 where  $g_{ij}$ are smooth functions of $x_1, x_2, x_3$. And   $E_i =\frac{1}{ \sqrt{g_{ii}} } \frac{\partial }{\partial x_i} $.

\end{lemma}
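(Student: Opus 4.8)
The plan is to pass to the dual orthonormal coframe and recast the hypothesis as a Frobenius integrability condition for three Pfaffian equations, each of which then carries an integrating factor. First I would introduce the coframe $\omega^1,\omega^2,\omega^3$ dual to $E_1,E_2,E_3$, so that $\omega^i(E_j)=\delta_{ij}$ and $g=(\omega^1)^2+(\omega^2)^2+(\omega^3)^2$. For $\{i,j,k\}=\{1,2,3\}$ the plane field $E_{ij}$ is exactly $\ker\omega^k$, since $\omega^k$ annihilates $E_i$ and $E_j$ but not $E_k$. Hence the assumption that each $E_{ij}$ is integrable translates, via the codimension-one Frobenius criterion, into $d\omega^k\wedge\omega^k=0$ for $k=1,2,3$: indeed, for $X,Y\in\ker\omega^k$ one has $d\omega^k(X,Y)=-\omega^k([X,Y])$, so $\ker\omega^k$ is closed under the Lie bracket precisely when $d\omega^k\wedge\omega^k=0$.

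Next, for each fixed $k$ I would apply the single-form (integrating-factor) version of the Frobenius theorem to the nowhere-vanishing form $\omega^k$: since $d\omega^k\wedge\omega^k=0$, the codimension-one distribution $\ker\omega^k$ is tangent to a local foliation, whose leaves near $p_0$ are the level sets of a smooth function $x_k$. Then $\ker\omega^k=\ker dx_k$, so $\omega^k$ and $dx_k$ are proportional, i.e. $\omega^k=h_k\,dx_k$ for a smooth nonvanishing $h_k$; replacing $x_k$ by $-x_k$ if needed we may take $h_k>0$. Carrying this out for $k=1,2,3$ produces three functions $x_1,x_2,x_3$, and setting $g_{kk}:=h_k^2$ gives $g=\sum_k(\omega^k)^2=g_{11}\,dx_1^2+g_{22}\,dx_2^2+g_{33}\,dx_3^2$, the desired diagonal form.

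It then remains to verify that $(x_1,x_2,x_3)$ genuinely form a coordinate chart and that $E_i=\frac{1}{\sqrt{g_{ii}}}\frac{\partial}{\partial x_i}$. Because $dx_k=h_k^{-1}\omega^k$ with $h_k\neq 0$ and the $\omega^k$ are pointwise linearly independent, the differentials $dx_1,dx_2,dx_3$ are linearly independent at $p_0$, so by the inverse function theorem the map $(x_1,x_2,x_3)$ is a diffeomorphism onto a neighborhood $V$ of $p_0$, whence the $x_k$ are valid coordinates. Finally, writing $E_i=\sum_j a_{ij}\frac{\partial}{\partial x_j}$ and evaluating $\omega^k=\sqrt{g_{kk}}\,dx_k$ on $E_i$ yields $\delta_{ik}=\omega^k(E_i)=a_{ik}\sqrt{g_{kk}}$, so $a_{ik}=\delta_{ik}/\sqrt{g_{kk}}$ and therefore $E_i=\frac{1}{\sqrt{g_{ii}}}\frac{\partial}{\partial x_i}$, as claimed.

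I expect the only real subtlety to be organizational rather than computational: one must invoke the Pfaffian (integrating-factor) form of Frobenius for each individual $\omega^k$, rather than the distribution form, and then confirm that the three first integrals produced independently assemble into a single chart. That last point is immediate from the linear independence of the $dx_k$, which is inherited from the orthonormality of the coframe, so no genuine obstruction arises.
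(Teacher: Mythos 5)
Your proof is correct and follows essentially the same route as the paper: both derive, from the integrability of each two-plane distribution $E_{ij}=\ker E_k^*$, an integrating-factor representation $E_k^*=\sqrt{g_{kk}}\,dx_k$, and then assemble the three resulting functions into a single chart using the linear independence of their differentials. The only difference is packaging: you make the Pfaffian form of Frobenius ($d\omega^k\wedge\omega^k=0$) explicit, whereas the paper obtains the same integrating factors by citing an adapted-coordinates lemma from \cite{Ki} for each foliation.
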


 \begin{proof}
 The metric $g$ can be written as
 $g= E_1^* \otimes E_1^* + E_2^* \otimes E_2^* +  E_3^* \otimes E_3^* $, where
 $\{E_i^* \}, i=1,2,3,$ is the dual co-frame field of $\{E_i\}$.

As $E_{23}$ is integrable,
we apply the argument of Lemma 4.2 in \cite{Ki} to $\{ c E_1 \ | \ c \in \mathbb{R} \}$ and $E_{23}$, and have a coordinate system $y_1,y_2,y_3$ near $p_0$ in which
 $g$ is written  as

\begin{equation} \label{ggg2}
g=    g^y_{11}dy_1^2+ g^y_{22}dy_2^2 +   g^y_{23} dy_2 \odot dy_3  +  g^y_{33}dy_3^2,
\end{equation}
and $E_1^* =  \sqrt{g^y_{11}}dy_1 $ and $E_2^* \otimes E_2^* +  E_3^* \otimes E_3^* =g^y_{22}dy_2^2 +   g^y_{23} dy_2 \odot dy_3  +  g^y_{33}dy_3^2 $. In this proof, $g_{ij}^{\bullet}$ denotes a function.

 Similarly, there is a coordinate system $z_1,z_2,z_3$ in which
 $g$ is written  as
\begin{equation} \label{ggg3}
g=    g^z_{22}dz_2^2 + g^z_{11}dz_1^2 +   g^z_{13} dz_1 \odot dz_3 +  g^z_{33}dz_3^2,
\end{equation}
and $E_2^* =  \sqrt{g^z_{22}}dz_2 $ and $E_1^* \otimes E_1^* +  E_3^* \otimes E_3^* =g^z_{11}dz_1^2 +   g^z_{13} dz_1 \odot dz_3 +  g^z_{33}dz_3^2$.

There is also a coordinate system $w_1, w_2, w_3$ and
\begin{equation} \label{ggg}
 g= g^w_{33}dw_3^2 + g^w_{11}dw_1^2 +   g^w_{12} dw_1 \odot dw_2 +  g^w_{22}dw_2^2,
\end{equation}
and $E_3^* =  \sqrt{g^w_{33}}dw_3 $ and $E_1^* \otimes E_1^* +  E_2^* \otimes E_2^* = g^w_{11}dw_1^2 +   g^w_{12} dw_1 \odot dw_2 +  g^w_{22}dw_2^2$.

The functions $y_1,z_2, w_3$ form a coordinates system near $p_0$ as $d y_1,d z_2, dw_3$ are linearly independent. We have

 $g= E_1^* \otimes E_1^* + E_1^* \otimes E_1^* +  E_1^* \otimes E_1^*  =g^y_{11}dy_1^2 +  g^z_{22}dz_2^2 +  g^w_{33}dw_3^2 $.

 We denote $y_1$, $z_2$, $w_3$  by $x_1$, $x_2$, $x_3$ respectively and then we can write (\ref{mtr1a}).

\end{proof}

For the metric of the form (\ref{mtr1a}) with  $E_i =\frac{1}{ \sqrt{g_{ii}} } \frac{\partial }{\partial x_i} $, one computes
 \begin{eqnarray} \label{e1f545}
\langle\nabla_{E_j} E_j, E_i\rangle =- \frac{ \frac{\partial g_{jj}}{\partial x_i}}{ 2g_{jj} \sqrt{g_{ii}}}  \ \ {\rm when} \ i \neq j.
\end{eqnarray}

\section{Two distinct Ricci-eigenvalues: $\lambda_1 \neq  \lambda_2= \lambda_3 $.  }

We shall study a $(\lambda,3+m)$-Einstein manifold $(M,g,w)$ with two distinct Ricci-eigenvalues, say  $\lambda_1 \neq  \lambda_2= \lambda_3$.
We can choose an orthonormal Ricci-eigen frame field $\{E_i\}$ in a neighborhood of each point in $\{\nabla w\neq 0\}$  such that $\lambda_1=R(E_1,E_1) \neq  \lambda_2= \lambda_3 $.  When $\nabla w$ is Ricci-eigen, we may choose  such $\{E_i\}$ so that
 $\frac{\nabla w}{ |\nabla w|} =E_1 $ or $\frac{\nabla w}{ |\nabla w|}  \in E_1^{\perp}$. In the latter case we may choose $E_3 = \frac{\nabla w}{ |\nabla w|}$.
  When $\nabla w$ is not Ricci-eigen, we can restrict $\{E_i\}$  as below;

\begin{lemma} Suppose $\lambda_1 \neq  \lambda_2= \lambda_3$.
  If $\nabla w$ is not a Ricci-eigen vector field, then there exists an orthonormal Ricci-eigen frame field $\{E_i\}$ such that $\lambda_1=R(E_1,E_1)$,  $E_1w \neq 0, E_3w \neq 0$ but $E_2w=0$.
\end{lemma}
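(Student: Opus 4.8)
The plan is to orient a Ricci-eigenframe so that $E_3$ points along the component of $\nabla w$ lying in the $2$-dimensional $\lambda_2$-eigenspace; then the orthogonality relations force $E_2 w = 0$ while the ``not Ricci-eigen'' hypothesis forces the other two derivatives to be nonzero. First I would fix the domain and notation. Let $L$ denote the $\lambda_1$-eigenspace (a line field) and $P = L^{\perp}$ the $\lambda_2=\lambda_3$-eigenspace (a plane field); by Lemma \ref{derdlem}(iv) these are mutually orthogonal differentiable distributions on the open set where the number of distinct eigenvalues of $Rc$ (equivalently of $\mathcal{C}$) is constant and $\nabla w \neq 0$. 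Split $\nabla w = u + v$ with $u \in L$ and $v \in P$, both smooth vector fields. The key observation is that $\nabla w$ is a Ricci-eigenvector at a point $x$ if and only if $u(x)=0$ (so $\nabla w \in P$) or $v(x)=0$ (so $\nabla w \in L$).

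Next I would argue that both components are nonzero on a suitable open set. The scalars $|u|^2$ and $|v|^2 = |\nabla w|^2 - |u|^2$ are real analytic (by the preceding lemma on analyticity of $g$ and $w$ in harmonic coordinates), hence so is the product $|u|^2\,|v|^2$. The hypothesis that $\nabla w$ is not a Ricci-eigen vector field means precisely that $|u|^2\,|v|^2$ does not vanish identically; by real analyticity its zero set is then nowhere dense, so on an open dense subset $U_0$ one has $u \neq 0$ and $v \neq 0$ simultaneously. I would construct the frame near an arbitrary point of $U_0$.

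For the construction itself, on $U_0$ set $E_3 := v/|v|$, a smooth unit field in $P$ (there is no sign or ordering ambiguity, since $v$ is a genuine vector, not merely a line). Choose a smooth unit section $E_1$ of the line field $L$ — possible locally by picking one of the two signs on a simply connected neighborhood — so that $\lambda_1 = R(E_1,E_1)$. Finally let $E_2$ be the unit vector in $P$ orthogonal to $E_3$, completing $\{E_1,E_2,E_3\}$ to an orthonormal frame; since $E_2 \in P$, it is Ricci-eigen with eigenvalue $\lambda_2$. The verification is then immediate: $E_2 w = \langle E_2, \nabla w\rangle = \langle E_2, u\rangle + \langle E_2, v\rangle = 0$, because $E_2 \perp L \ni u$ and $E_2 \perp E_3 \parallel v$ inside $P$; next $E_3 w = \langle E_3, v\rangle = |v| \neq 0$; and $E_1 w = \langle E_1, u\rangle = \pm|u| \neq 0$.

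I expect the only genuine point of care to be the global logic surrounding the hypothesis, namely translating ``$\nabla w$ not Ricci-eigen'' into ``$u$ and $v$ are simultaneously nonzero on an open dense set,'' for which the real-analyticity of $g$ and $w$ is essential (pointwise, either component could vanish on a thin set even when $\nabla w$ is generically non-eigen). Once that is secured, smoothness of $E_3$ rests exactly on $v \neq 0$, while smoothness of $E_1$ and $E_2$ is the standard local trivialization of the orthogonal eigendistributions from Lemma \ref{derdlem}(iv); the remainder is linear algebra in the splitting $T_xM = L_x \oplus P_x$.
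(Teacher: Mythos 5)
Your construction is correct and is essentially the paper's own proof: your $E_2$ (the unit vector in the eigenplane $P$ orthogonal to $E_3 = v/|v|$) spans exactly the one-dimensional intersection $E_1^{\perp} \cap (\nabla w)^{\perp}$ from which the paper picks its $E_2$, and your direct verification $E_1 w = \pm|u| \neq 0$, $E_3 w = |v| \neq 0$ is the same check the paper does contrapositively (``otherwise $\nabla w$ will be Ricci-eigen''). The only divergence is your real-analyticity/density preamble, which the paper renders unnecessary by reading the hypothesis pointwise --- at each point of the domain considered, $\nabla w$ is not an eigenvector, so both components of $\nabla w = u + v$ are automatically nonzero there.
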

\begin{proof}
Let $E_1$ be a unit  Ricci-eigen vector field with $\lambda_1=R(E_1,E_1)$.
Set $F_1 := \frac{\nabla w}{ |\nabla w|}$. Let $E_1^{\perp}$ be the 2-d distribution pointwise perpendicular to $E_1$.

 As  $E_1 \neq \pm F_1$, $E_1^{\perp}$
 and  $F_1^{\perp}$ intersect along a one-dimensional subspace. So, we may choose $E_2$ to be in that one-dimensional subspace. Choose
 $E_3$ to be orthogonal to $E_1$ and $E_2$.
  Then $E_2w =g(E_2, \nabla w)=0$. But $E_1 w \neq 0$ and $E_3 w \neq 0$; otherwise, $\nabla w$ will be Ricci-eigen.
\end{proof}
 So, regardless of whether $\nabla w$ is Ricci-eigen or not, we can have $E_2w =0$.
We shall call an orthonormal Ricci-eigen frame field  $\{E_i\}$ to be {\it adapted} if $\lambda_1=R(E_1,E_1) \neq  \lambda_2= \lambda_3 $ and  $E_2w=0$.

\medskip
 For the Codazzi tensor $\mathcal{C}= w^{m+1}   R_{jk}  -\frac{m}{2(m-1)} R w^{m+1}g_{jk} +\frac{1}{ m-1} \lambda  w^{m+1} g_{jk}$,  the Ricci tensor $Rc$ and $\mathcal{C}$ have the same eigenspaces, and the eigen-functions $\mu_i$ of $\mathcal{C}$ satisfy  $\mu_1 \neq  \mu_2= \mu_3 $.

Let $E_{23}$ be the distribution spanned by $E_2$ and $E_3$.
By  Lemma \ref{derdlem} (ii),  $E_{23}$  is integrable.    Then there exists a coordinate neighborhood $(x_1, x_2, x_3)$ so that
\begin{eqnarray} \label{e1f5}
\ \ \  \ \ \ \ \ \ \  g= g_{11} dx_1^2     + g_{22} dx_2^2   +  g_{23} dx_2 dx_3+  g_{33} dx_3^2, \ {\rm with} \ E_1 =\frac{1}{\sqrt{g_{11}}}\frac{\partial}{ \partial x_1},
\end{eqnarray}
 where $g_{ij}$ are functions of $x_1, x_2, x_3$ and  $E_{23}$ is identical to the span of $\frac{\partial}{\partial x_2}$ and $\frac{\partial}{\partial x_3}$.

By  Lemma \ref{derdlem} (iii), the eigen-function $\mu_2$ of $\mathcal{C}$ corresponding to $E_{23}$ is constant along a leaf $L$ of $E_{23}$. So, for $ i=2,3$,
\begin{align} \label{e1f5b}
E_i \mu_2=E_i  \left\{w^{m+1}   \lambda_2  -\frac{m}{2(m-1)} R w^{m+1} +\frac{1}{ m-1} \lambda  w^{m+1} \right \}=0 .
\end{align}

\medskip

As $L$ is umbilic by  Lemma \ref{derdlem} (ii), the second fundamental form $h= \frac{H}{2}g^{\sigma}$, where $H$ is the mean curvature of $L$ and $g^{\sigma}:= g|_{L}$. Henceforth, we shall often denote $ \frac{\partial }{\partial x_i}$ by $ \partial_i$.
Denoting by $\nabla^{\sigma} $ the Levi-Civita connection of $g^{\sigma}$, the Codazzi-Mainardi
equations give;
\begin{align*}
  (\nabla^{\sigma}_{\partial_i} h) (  \partial_j, \partial_k)  - (\nabla^{\sigma}_{\partial_j} h) (  \partial_i, \partial_k)
=   R( \partial_j, \partial_i, \partial_k, E_1 )
\end{align*}
 for $i,j,k \geq 2$. Taking the trace of both sides in $j,k$, we get $\partial_i H = 0$ for $i=2,3$, \cite{CMM}.
We then have
 $ E_i   \langle\nabla_{E_2} E_2, E_1 \rangle  =-\frac{1}{2}E_i H=0,\textrm{  i=2,3}.
$

\medskip
Lemma \ref{derdlem}  (i) gives the following;
for any distinct $i, j \in \{1,2,3\}$ with $\mu_j \neq  \mu_i$,
\begin{align} \label{e1f0c}
  (\mu_j - \mu_i) \langle\nabla_{E_j} E_j, E_i \rangle= E_i \mu_j.
\end{align}

Thus for $i=2,3$,
\begin{align}
0=E_i   \langle\nabla_{E_2} E_2, E_1 \rangle =E_i\left(\frac{E_1\mu_2}{\mu_2-\mu_1}\right)=\frac{E_iE_1\mu_2}{\mu_2-\mu_1}+\frac{(E_1\mu_2)(E_i\mu_1)}{(\mu_2-\mu_1)^2}. \label{noneigenst}
\end{align}

The third equality is by (\ref{e1f5b}). As $\langle\nabla_{E_i}E_1, E_1\rangle=0$,  $\nabla_{E_i}E_1$ is  of the form $a E_2 + bE_3$ and so $\nabla_{E_i}E_1 (\mu_2)=0$.
Then
$E_iE_1\mu_2=(\nabla_{E_i}E_1-\nabla_{E_1}E_i+E_1E_i)\mu_2=-(\nabla_{E_1}E_i)\mu_2$.
We denote $\nabla_{E_i}E_j=:\Gamma_{ij}^kE_k$. Then eq. (\ref{noneigenst}) can be expressed as follows.
\begin{align}
  (E_1\mu_2)\{E_i\mu_1+\Gamma_{11}^i(\mu_2-\mu_1)\}=0 \ \ {\rm for} \  i=2,3. \label{core}
\end{align}

We can prove some technical formulas.
\begin{lemma}\label{elambda}
  Let $(M,g,w)$ be a $(\lambda,3+m)$-Einstein manifold with Ricci-eigenvalues $\lambda_1\neq\lambda_2=\lambda_3$. Consider an adapted frame field $\{E_i\}$ in an open subset $\mathcal{U}$ of $\{\nabla w\neq 0\}$. Then we have the followings:
\begin{align}
 & E_2R=E_2\lambda_1=E_2\lambda_2=0 \label{e2lambda}\\
&E_3\lambda_1=\left\{2\lambda-(m+2)\lambda_1\right\}\frac{E_3w}{w} \label{e3lambda1}\\
&E_3\lambda_2=\left\{\frac{m}{2}\lambda_1+\lambda-(m+1)\lambda_2\right\}\frac{E_3w}{w} \label{e3lambda2}\\
&E_1\lambda_1=(2\lambda-m\lambda_1-2\lambda_2)\frac{E_1w}{w}+H(\lambda_2-\lambda_1) \label{e1lambda1}\\
&E_1\lambda_2=(\lambda-\lambda_2-\frac{m}{2}\lambda_1)\frac{E_1w}{w}-\frac{H}{2}(\lambda_2-\lambda_1) \label{e1lambda2}
\end{align}

\end{lemma}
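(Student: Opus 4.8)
The plan is to reduce the five identities to two inputs evaluated in the adapted frame $\{E_i\}$: the scalar-curvature gradient identity of Lemma~\ref{kklem}(ii), and the Codazzi structure of $\mathcal{C}$ (through the constancy (\ref{e1f5b}) and Lemma~\ref{derdlem}(i)), combined with the purely algebraic relation $R=\lambda_1+2\lambda_2$, which holds in dimension three because $\lambda_2=\lambda_3$. Throughout I use $Rc(E_i,E_j)=\lambda_i\delta_{ij}$ and, crucially, $E_2w=0$, together with $\mu_i=w^{m+1}\big(\lambda_i-\frac{m}{2(m-1)}R+\frac{\lambda}{m-1}\big)$, so that $\mu_1-\mu_2=w^{m+1}(\lambda_1-\lambda_2)\neq0$ on $\{w\neq0\}$ (this distinctness is what licenses the use of Lemma~\ref{derdlem}(i)).

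First I would compute $E_iR$. Since $\{E_i\}$ is Ricci-eigen, $Rc(\nabla w,E_i)=\lambda_i\,E_iw$ and $g(\nabla w,E_i)=E_iw$, so Lemma~\ref{kklem}(ii) with $n=3$ becomes $\frac{w}{2(m-1)}E_iR=(E_iw)\big(-\lambda_i+\frac{2\lambda-R}{m-1}\big)$. For $i=2$ the factor $E_2w=0$ forces $E_2R=0$; for $i=1,3$, inserting $R=\lambda_1+2\lambda_2$ yields the explicit forms $E_1R=\frac{2E_1w}{w}(2\lambda-m\lambda_1-2\lambda_2)$ and $E_3R=\frac{2E_3w}{w}\big(2\lambda-\lambda_1-(m+1)\lambda_2\big)$.

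Next, from $E_i\mu_2=0$ for $i=2,3$ (eq.~(\ref{e1f5b})) I would expand $\mu_2$: using $E_2w=E_2R=0$ this gives $E_2\lambda_2=0$ immediately, while for $i=3$ I solve the expansion for $E_3\lambda_2$, substitute the formula for $E_3R$ above, and simplify to (\ref{e3lambda2}). Differentiating $R=\lambda_1+2\lambda_2$ then gives $E_i\lambda_1=E_iR-2E_i\lambda_2$, which produces $E_2\lambda_1=0$ (completing (\ref{e2lambda})) and, after arithmetic, (\ref{e3lambda1}). For the $E_1$-formulas the only genuinely geometric input beyond $E_1R$ is the mean curvature $H$ of the umbilic leaves: applying Lemma~\ref{derdlem}(i) with $v=E_1$, $u_1=u_2=E_2$ gives $E_1\mu_2=(\mu_2-\mu_1)\langle\nabla_{E_2}E_2,E_1\rangle=-\tfrac{H}{2}(\mu_2-\mu_1)$ via the umbilicity relation $\langle\nabla_{E_2}E_2,E_1\rangle=-\tfrac{H}{2}$ recorded above. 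Since $\mu_2-\mu_1=w^{m+1}(\lambda_2-\lambda_1)$, dividing by $w^{m+1}$ and expanding $E_1\mu_2$ lets me solve for $E_1\lambda_2$; inserting $E_1R$ and collecting the $\frac{E_1w}{w}$-terms yields (\ref{e1lambda2}), whence $E_1\lambda_1=E_1R-2E_1\lambda_2$ gives (\ref{e1lambda1}).

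The main obstacle is not conceptual but lies in this last step: it is where all the pieces combine, and the coefficients of $\frac{E_1w}{w}$ must cancel to leave exactly $(2\lambda-m\lambda_1-2\lambda_2)$ alongside the mean-curvature contribution. The essential point is that Lemma~\ref{kklem}(ii) determines only the sum $E_1R=E_1\lambda_1+2E_1\lambda_2$ and cannot separate $E_1\lambda_1$ from $E_1\lambda_2$; the separation is forced precisely by the Codazzi identity through $H$, so care is needed to introduce $H$ via Lemma~\ref{derdlem}(i) and to track its sign consistently with the umbilicity normalization.
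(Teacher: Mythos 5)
Your proposal is correct and follows essentially the same route as the paper's proof: $E_2\lambda_2$ and $E_3\lambda_2$ come from the leafwise constancy of $\mu_2$ in (\ref{e1f5b}), $E_1R$ and $E_3R$ from Lemma \ref{kklem}(ii) with $R=\lambda_1+2\lambda_2$, the $E_1$-formulas from the Codazzi/umbilicity identity $E_1\mu_2=-\frac{H}{2}(\mu_2-\mu_1)$, and the $\lambda_1$-formulas by subtracting twice the $\lambda_2$-formulas from $E_iR$. The only (harmless) deviation is your derivation of $E_2R=0$ by evaluating Lemma \ref{kklem}(ii) at $E_2$ using $Rc(\nabla w,E_2)=\lambda_2E_2w=0$, whereas the paper instead differentiates the constancy identity (\ref{consteq}) along $E_2$ after establishing $E_2E_1w=E_2E_3w=0$; both arguments are valid, and yours is slightly more direct.
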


\begin{proof}  Lemma \ref{derdlem} (i) gives $\langle\nabla_{E_2} E_1, E_3\rangle = - \langle E_1, \nabla_{E_2}E_3\rangle=0$.
From (\ref{hpwdef}),
\begin{align*}
0=  \nabla dw(E_2,E_1)=E_2E_1w-(\nabla_{E_2}E_1)w=E_2E_1w.
\end{align*}
Similarly, we can get $E_2E_3w=0$. Taking $E_2$-derivative to (\ref{consteq}), we get
\begin{align*}
 \frac{w^2}{m}E_2R+2(m-1)\left\{(E_2E_1w)(E_1w)+(E_2E_3w)(E_3w)\right\}=0.
\end{align*}
Thus we get $E_2R=0$.
From (\ref{e1f5b}) and $E_2w=0$,
\begin{align*}
  0=E_2\mu_2=E_2\left\{w^{m+1}\lambda_2+\frac{w^{m+1}}{2(m-1)}(2\lambda-mR)\right\}=w^{m+1}(E_2\lambda_2)
\end{align*}
Thus $E_2\lambda_2=0$ and this implies that $E_2\lambda_1$ is also zero. Applying $E_1$ and $E_3$ to the formula (ii) of Lemma \ref{kklem},
\begin{align}
  E_1R=(4\lambda-2m\lambda_1-4\lambda_2)\frac{E_1w}{w}, \ \ \ \ \ \ \  \label{e3rcompue}\\
  E_3R=(4\lambda-2\lambda_1-2(m+1)\lambda_2)\frac{E_3w}{w}.\label{e3rcompu}
\end{align}
 $E_3\mu_2$ is computed as follows.
\begin{align*}
 0= E_3\mu_2=&(m+1)w^m(E_3w)\lambda_2+w^{m+1}(E_3\lambda_2)+\frac{m+1}{2(m-1)}w^m(E_3w)(2\lambda-mR)\\
&\qquad-\frac{m}{2(m-1)}w^{m+1}(E_3R)\\
=&w^m(E_3w)\left\{(m+1)\lambda_2-\frac{m}{2}\lambda_1-\lambda\right\}+w^{m+1}(E_3\lambda_2)
\end{align*}
The last equality is obtained from (\ref{e3rcompu}). Thus we get (\ref{e3lambda2}) and we can obtain (\ref{e3lambda1}) by comparing (\ref{e3lambda2}) and (\ref{e3rcompu}).
Next, we compute $E_1\mu_2$ in the same manner;
\begin{align*}
  E_1\mu_2=w^m(E_1w)(\lambda_2+\frac{m}{2}\lambda_1-\lambda)+w^{m+1}(E_1\lambda_2).
\end{align*}

From $\frac{E_1\mu_2}{\mu_2-\mu_1}=  \frac{E_1\mu_2}{w^{m+1} (\lambda_2-\lambda_1)}=-\frac{H}{2}$, we get (\ref{e1lambda2}) and
 then (\ref{e1lambda1}) follows from (\ref{e3rcompue}).
\end{proof}

 \begin{lemma} \label{3c}
 Let $(M,g,w)$ be a $(\lambda,3+m)$-Einstein manifold with Ricci-eigenvalues $\lambda_1\neq\lambda_2=\lambda_3$. Suppose  that  $\nabla w$ is not a Ricci-eigen vector field. Consider an adapted frame fields $\{E_i\}$ in an open subset $\mathcal{U}$ of $\{\nabla w\neq 0\}$. Then there exists locally a coordinate system $x_1, x_2, x_3$ in which
 \begin{equation} \label{16s}
g = g_{11}(x_1,  x_3)dx_1^2 +   g_{33}(x_1, x_3) k(x_2, x_3) dx_2^2  + g_{33}(x_1, x_3) dx_3^2,
\end{equation}
 for a function $k$, with $E_i =\frac{1}{ \sqrt{g_{ii}}} \frac{\partial }{\partial x_i} $ for $i=1,2,3$, where $g_{ii}$ is the coefficient function of $dx_i^2$. And for functions $\alpha, \beta$,
\begin{align}
& \nabla_{E_1} E_1=  - \alpha  E_3, \ \ \ \ \ \ \ \ \nabla_{E_1} E_2=0, \ \ \  \ \ \ \ \ \ \nabla_{E_1} E_3= \alpha E_1, \nonumber  \\
&\nabla_{E_2} E_1=\frac{H}{2}E_2,  \  \ \ \ \nabla_{E_2} E_2= -\frac{H}{2}E_1 + \beta E_3, \ \  \nabla_{E_2} E_3=  -\beta E_2,  \label{0ca2}\\
& \nabla_{E_3} E_1 = \frac{H}{2}E_3,  \ \   \  \ \ \   \ \   \    \nabla_{E_3} E_2=0, \ \ \ \ \ \  \ \ \       \nabla_{E_3} E_3=-\frac{H}{2} E_1. \nonumber
 \end{align}
 The mean curvature $H$ does not depend on $x_2, x_3$.
 Ricci curvature components are as follows;
\begin{eqnarray}
\ \ \ \ \ R_{11} := R(E_1, E_1)= -E_1H +2\alpha \beta-\frac{H^2}{2}. \ \ \   \ R_{ij} =0 \  {\rm for} \ i \neq j. \label{rc4} \\
R_{22} =R_{33}= -\frac{E_1H}{2}-\frac{H^2}{2}+\alpha \beta  + E_3\beta    -\beta^2. \ \ \ \ \ \ \ \ \ \  \nonumber
 \end{eqnarray}
Futhermore, the followings hold;
\begin{eqnarray}
\ \ \ \ \   \ E_3\alpha + \alpha^2 + \alpha \beta =0, \   \ \
  E_2 \alpha =0, \ \ \    E_1\beta +\frac{H}{2}(\alpha+ \beta )  =0.  \label{ef2}
 \end{eqnarray}

 \end{lemma}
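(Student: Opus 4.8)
The plan is to determine the nine connection coefficients $\Gamma_{ij}^k := \langle \nabla_{E_i} E_j, E_k\rangle$ of the adapted frame, then read off the coordinate form of $g$ and compute curvature. Throughout I abbreviate $w_i := E_i w$, so $w_2 = 0$, $w_1, w_3 \neq 0$, and $\nabla w = w_1 E_1 + w_3 E_3$; I will repeatedly use that $Rc$, hence $\nabla dw = \frac{w}{m}(Rc-\lambda g)$ by (\ref{hpwdef}), is diagonal in $\{E_i\}$, so $\nabla dw(E_i,E_j)=0$ for $i\neq j$.

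I first record the coefficients tied to the $\mu_2$-eigenspace $E_{23}$. Its leaves are integrable and umbilic by Lemma \ref{derdlem}(ii), so with $H$ the mean curvature one has $\langle\nabla_{E_2}E_2,E_1\rangle=\langle\nabla_{E_3}E_3,E_1\rangle=-\frac{H}{2}$ and $\langle\nabla_{E_2}E_3,E_1\rangle=0$, hence $\nabla_{E_2}E_1=\frac{H}{2}E_2$ and $\nabla_{E_3}E_1=\frac{H}{2}E_3$; moreover $\partial_2 H=\partial_3 H=0$ was already shown. The crucial and only delicate point is $\langle\nabla_{E_1}E_1,E_2\rangle=0$. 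I obtain this from Lemma \ref{derdlem}(i) with $v=E_2$ (a $\mu_2$-eigenvector) and $u_1=u_2=E_1$ (a $\mu_1$-eigenvector): it gives $E_2\mu_1=(\mu_1-\mu_2)\langle\nabla_{E_1}E_1,E_2\rangle$, and since $E_2R=E_2\lambda_1=E_2w=0$ by (\ref{e2lambda}) we have $E_2\mu_1=0$, so $\langle\nabla_{E_1}E_1,E_2\rangle=0$ as $\mu_1\neq\mu_2$. The diagonal Hessian alone cannot decide this, since $\nabla dw(E_1,E_2)=0$ only yields $\langle\nabla_{E_1}E_1,E_2\rangle\,w_1=\langle\nabla_{E_1}E_2,E_3\rangle\,w_3$; the Codazzi input is exactly what breaks this tie. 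Once $\langle\nabla_{E_1}E_1,E_2\rangle=0$, that same relation $\nabla dw(E_1,E_2)=0$ reads $\langle\nabla_{E_1}E_2,E_3\rangle\,w_3=0$, so $\langle\nabla_{E_1}E_2,E_3\rangle=0$; and $\nabla dw(E_3,E_2)=0$ reads $\langle\nabla_{E_3}E_3,E_2\rangle\,w_3=0$, giving $\langle\nabla_{E_3}E_3,E_2\rangle=0$. Putting $\alpha:=-\langle\nabla_{E_1}E_1,E_3\rangle$ and $\beta:=\langle\nabla_{E_2}E_2,E_3\rangle$ and using skew-symmetry of $\nabla$ in its last two slots, these fix every entry of (\ref{0ca2}).

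Next, $\langle\nabla_{E_1}E_2,E_3\rangle=0$ forces $\langle[E_1,E_2],E_3\rangle=\langle[E_1,E_3],E_2\rangle=0$, so $E_{12}$ and $E_{13}$ are integrable (as is $E_{23}$), and Lemma \ref{claim112b8} supplies coordinates $(x_1,x_2,x_3)$ with $g=\sum_i g_{ii}\,dx_i^2$ and $E_i=g_{ii}^{-1/2}\partial_i$. Feeding (\ref{0ca2}) into (\ref{e1f545}) yields $\partial_2 g_{11}=0$ (from $\langle\nabla_{E_1}E_1,E_2\rangle=0$), $\partial_2 g_{33}=0$ (from $\langle\nabla_{E_3}E_3,E_2\rangle=0$), and $\partial_1\ln g_{22}=H\sqrt{g_{11}}=\partial_1\ln g_{33}$ (from the two equal values $-\frac{H}{2}$); the last identity shows $g_{22}/g_{33}$ is independent of $x_1$, i.e. $g_{22}=g_{33}(x_1,x_3)\,k(x_2,x_3)$, which is (\ref{16s}), and that $H=H(x_1)$.

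The Ricci identities (\ref{rc4}) then follow by directly expanding $\langle R(E_1,E_j)E_j,E_1\rangle$ and $\langle R(E_2,E_j)E_j,E_2\rangle$ from (\ref{0ca2}), the off-diagonal $R_{ij}=0$ holding because $\{E_i\}$ is a Ricci-eigenframe. Finally, (\ref{ef2}) comes from reading off these vanishing off-diagonal components together with one commutator identity: the computation $R_{23}=-E_2\alpha$ gives $E_2\alpha=0$; $R_{13}=E_1\beta+\frac{H}{2}(\alpha+\beta)$ gives the third relation; and expanding $[E_1,E_3]w_3$ through the diagonal Hessian relations $E_1 w_3=\alpha w_1$, $E_3 w_1=\frac{H}{2}w_3$, $E_3 w_3=-\beta w_3$ yields $\bigl(E_3\alpha+\alpha^2+\alpha\beta\bigr)w_1+\bigl(E_1\beta+\frac{H}{2}(\alpha+\beta)\bigr)w_3=0$, whence $E_3\alpha+\alpha^2+\alpha\beta=0$ after using the previous relation and $w_1\neq 0$. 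I expect the single genuine obstacle to be the vanishing of $\langle\nabla_{E_1}E_1,E_2\rangle$; its clean resolution via Lemma \ref{derdlem}(i) and $E_2\mu_1=0$ is the key, and everything else is bookkeeping with the frame.
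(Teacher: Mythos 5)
Your proof is correct, and its skeleton is the same as the paper's: the delicate coefficient $\langle\nabla_{E_1}E_1,E_2\rangle=0$ is obtained exactly as the paper does it, from Lemma \ref{derdlem}(i) (the paper's (\ref{e1f0c})) together with $E_2\mu_1=0$ coming from $E_2w=0$ and (\ref{e2lambda}); the rest of (\ref{0ca2}) then follows from umbilicity and the off-diagonal vanishing of $\nabla dw$, the three distributions are integrable, and Lemma \ref{claim112b8} with (\ref{e1f545}) gives (\ref{16s}) and $H=H(x_1)$ --- all identical to the paper. Where you genuinely deviate is in (\ref{ef2}). The paper obtains $E_2\alpha=0$ and $E_1\beta+\frac{H}{2}(\alpha+\beta)=0$ from the Jacobi identity $[[E_1,E_2],E_3]+[[E_2,E_3],E_1]+[[E_3,E_1],E_2]=0$, and obtains $E_3\alpha+\alpha^2+\alpha\beta=0$ purely Riemannian-geometrically by equating the expanded $R_{22}$ and $R_{33}$ (i.e.\ $R_{1221}=R_{1331}$), using only $\lambda_2=\lambda_3$. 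You instead read the first two identities off the vanishing Ricci entries $R_{23}=-E_2\alpha$ and $R_{13}=E_1\beta+\frac{H}{2}(\alpha+\beta)$ --- equivalent in content to the Jacobi computation, and your expansions check out --- and you get the third by expanding $[E_1,E_3]w_3$ against the Hessian relations $E_1w_3=\alpha w_1$, $E_3w_1=\frac{H}{2}w_3$, $E_3w_3=-\beta w_3$, a derivation that additionally leans on the quasi-Einstein equation and on $w_1\neq 0$; both routes are legitimate under the lemma's hypotheses, though the paper's version of that identity does not involve $w$ at all. One ordering caveat: direct expansion gives $R_{11}=-E_1H+\alpha\beta-E_3\alpha-\alpha^2-\frac{H^2}{2}$, so the closed form $R_{11}=-E_1H+2\alpha\beta-\frac{H^2}{2}$ in (\ref{rc4}) (and the fact that the expanded $R_{33}$ coincides with the displayed $R_{22}$) already presupposes $E_3\alpha+\alpha^2+\alpha\beta=0$; in your write-up that identity is established after (\ref{rc4}), so the two steps should be swapped or merged --- the paper sidesteps this by producing the identity from the requirement $R_{22}=R_{33}$ itself. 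This is purely presentational; every ingredient is present in your argument.
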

 \begin{proof}

\bigskip
From $E_2 w=0$ and (\ref{e2lambda}), $E_2 \mu_1 =0$.  So, $\langle\nabla_{E_1} E_1, E_2\rangle=0$ from (\ref{e1f0c}).
  The equation {\rm (\ref{hpwdef})} gives
  \begin{align*}
    0=\nabla d w (E_1, E_2)   = E_1 E_2 w - (\nabla_{E_1} E_2) w
 =- \langle\nabla_{E_1} E_2, E_3\rangle(E_3 w).
  \end{align*}
 As $\nabla w$ is not Ricci-eigen and $E_2 w=0$, we have  $E_1 w \neq 0$ and $E_3w \neq 0$.
 So, $ \langle\nabla_{E_1} E_2, E_3\rangle=0$ and $\nabla_{E_1} E_2=0$. Now, $\nabla_{E_1} E_3= \alpha E_1$ for a function $\alpha$.
 As  a leaf $L$ of $E_{23}$ is umbilic with $h  = \frac{H}{2} g|_{L}$, $\ 0=h(E_2, E_3) = -\langle \nabla_{E_3} E_2, E_1  \rangle $.
Then
$0=\nabla d w (E_3, E_2)   = E_3 E_2 w - (\nabla_{E_3} E_2) w=  - \langle\nabla_{E_3} E_2, E_3\rangle(E_3 w).$
 So, $\langle\nabla_{E_3} E_2, E_3\rangle=0$ and $\nabla_{E_3} E_2=0$. As $\langle\nabla_{E_3} E_3, E_1 \rangle  =-\frac{1}{2} H $,  $\nabla_{E_3} E_1 = \frac{H}{2}E_3$.
Similarly, $\nabla_{E_2} E_1=\frac{H}{2}E_2$.  We can easily compute the rest of formulas in (\ref{0ca2}).


As $[E_1, E_2] = -\frac{H}{2}E_2 $  and  $[E_1, E_3] =\alpha E_1 -\frac{H}{2}E_3$, we find that
$E_{12}, E_{13}, E_{23}$ are all integrable.

By Lemma \ref{claim112b8} there exists a new coordinate system, which we still denote by $x_1, x_2, x_3$ in which
\begin{equation}
g = g_{11}(x_1,  x_3)dx_1^2 +    g_{22}(x_1, x_2, x_3) dx_2^2  + g_{33}(x_1,  x_3) dx_3^2,
\end{equation}
with $E_i =\frac{1}{ \sqrt{g_{ii}}} \frac{\partial }{\partial x_i} $;
  here $g_{11}$ and $g_{33}$ does not depend on $x_2$, from $\langle\nabla_{E_1} E_1, E_2\rangle=0$,  $\langle\nabla_{E_3} E_3, E_2\rangle=0$ and (\ref{e1f545}).

   We compute Jacobi identity $[[E_1, E_2], E_3]  +[[E_2, E_3], E_1] + [[E_3, E_1], E_2]=0$ as follows;
$(E_1 \beta) E_2+\frac{H}{2} (\alpha+\beta) E_2   +(E_2 \alpha) E_1=0,$
 so we get $E_2 \alpha =0$,  and $E_1\beta+\frac{H}{2}(\alpha+ \beta ) =0 $.

We compute some curvature components of $g$;
\begin{align*}
  &R_{1221}:= R(E_1, E_2, E_2, E_1)=-\frac{E_1H}{2}+ \alpha \beta -\frac{H^2}{4},\\
  &R_{1331}= -\frac{E_1H}{2}-E_3\alpha - \alpha^2-\frac{H^2}{4},  \ \ \ \ \ \ \ \ \ \  \\
  &R_{2332}=-\frac{H^2}{4}+ E_3\beta    -\beta^2.  \ \ \ \ \ \ \ \ \ \
\end{align*}
Other $R_{ijkl}=0$.
By $R_{22}=R_{33}$, we get $ R_{1221} =R_{1331} $ which is $E_3\alpha + \alpha^2 + \alpha \beta =0  $. From these, we get Ricci curvature formulas.

As $\frac{H}{2}=\langle\nabla_{E_2} E_1, E_2\rangle = \frac{ \partial_1 g_{22}}{ 2g_{22} \sqrt{g_{11}}}$ by (\ref{e1f545}) and
 $\frac{H}{2}=\langle\nabla_{E_3} E_1, E_3\rangle= \frac{ \partial_1 g_{33}}{ 2g_{33} \sqrt{g_{11}}}$, we get $ \frac{\partial_1 g_{22}}{ 2g_{22} \sqrt{g_{11}}}=\frac{ \partial_1 g_{33}}{ 2g_{33} \sqrt{g_{11}}} $. Then
$g_{22}=g_{33} k(x_2, x_3)$ for a function $k$.  From $E_i H =0$ for $i=2,3$, we get $H=H(x_1)$.
 \end{proof}

In the (proofs of) lemmas below,  $c_i$, $i=2,3, \cdots, $ will denote a function, mostly with $x_1$ variable only.

 \begin{lemma} \label{41d3}
Under the hypothesis of Lemma {\rm \ref{3c}}, we further assume that  $\alpha =\langle\nabla_{E_1} E_3, E_1 \rangle= 0$.
 Then there exists locally a coordinate system $(x_1, x_2, x_3)$ in which
\begin{align} \label{gg5}
  g = dx_1^2 +    e^{ \int^{x_1}_{c} H(u_1)du_1} (q( x_3))^2  dx_2^2  + e^{ \int^{x_1}_c H(u_1)du_1} dx_3^2
\end{align}
 for a constant $c$, a function $q(x_3)$, $E_1 =\frac{\partial}{\partial x_1}$, $E_2 =   \frac{1}{q}e^{ -\frac{1}{2}\int^{x_1}_c H(u_1)du_1} \frac{\partial}{\partial x_2}$ and $E_3 =   e^{ -\frac{1}{2}\int^{x_1}_c H(u_1)du_1} \frac{\partial}{\partial x_3}.$  And  $R(E_1, E_1)= -\partial_1H -\frac{H^2}{2}$.

 \end{lemma}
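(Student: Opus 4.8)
The plan is to normalize the coordinate system supplied by Lemma \ref{3c} in stages, and to extract the warped structure from the one component of the Einstein equation (\ref{hpwdef}) that has not yet been used.

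\emph{Normalizing the $x_1$-direction and the $x_1$-dependence.} With $\alpha=0$ the frame relations (\ref{0ca2}) collapse to $\nabla_{E_1}E_1=\nabla_{E_1}E_2=\nabla_{E_1}E_3=0$, so the $E_1$-curves are geodesics and the frame is parallel along them. In particular $\langle\nabla_{E_1}E_1,E_3\rangle=0$, which by (\ref{e1f545}) forces $\partial_3 g_{11}=0$; since $g_{11}$ already does not depend on $x_2$, we have $g_{11}=g_{11}(x_1)$, and the single-variable reparametrization $x_1\mapsto\int\sqrt{g_{11}}\,dx_1$ achieves $g_{11}=1$, $E_1=\partial_1$. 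Next, from $\tfrac{H}{2}=\langle\nabla_{E_2}E_1,E_2\rangle=\tfrac{\partial_1 g_{22}}{2g_{22}}$ together with the analogous identity for $g_{33}$ (both via (\ref{e1f545}), as in the proof of Lemma \ref{3c}), one gets $\partial_1\ln g_{22}=\partial_1\ln g_{33}=H(x_1)$. Integrating in $x_1$, and recalling from Lemma \ref{3c} that $g_{33}$ is independent of $x_2$, yields $g_{22}=e^{\int_c^{x_1}H}\,G(x_2,x_3)$ and $g_{33}=e^{\int_c^{x_1}H}\,\phi(x_3)$ for positive functions $G$ and $\phi$.

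\emph{The key separation.} It remains to show that the factor $G$ separates as $P(x_2)Q(x_3)$. By (\ref{e1f545}), $\beta=\langle\nabla_{E_2}E_2,E_3\rangle=-\tfrac{\partial_3 g_{22}}{2g_{22}\sqrt{g_{33}}}=-\tfrac{\partial_3\ln G}{2\sqrt{g_{33}}}$, and since $g_{33}$ does not depend on $x_2$ this gives $E_2\beta=0\iff\partial_2\partial_3\ln G=0$; so it suffices to prove $E_2\beta=0$. Here the potential function enters. Using (\ref{0ca2}) and $E_2w=0$, the $(E_2,E_2)$-component of (\ref{hpwdef}) is
\[
\tfrac{H}{2}E_1 w-\beta E_3 w=\nabla dw(E_2,E_2)=\tfrac{w}{m}(\lambda_2-\lambda).
\]
Apply $E_2$ to this identity. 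From $E_2w=0$ and the brackets $[E_2,E_1]=\tfrac{H}{2}E_2$ and $[E_2,E_3]=-\beta E_2$ (read off from (\ref{0ca2})) one obtains $E_2E_1w=E_1E_2w+[E_2,E_1]w=0$ and likewise $E_2E_3w=0$; moreover $E_2H=0$ and $E_2\lambda_2=0$ by (\ref{e2lambda}). Hence the left side differentiates to $-(E_2\beta)(E_3w)$ while the right side differentiates to $0$, so $(E_2\beta)(E_3w)=0$. As $\nabla w$ is not Ricci-eigen we have $E_3w\neq0$, and therefore $E_2\beta=0$.

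\emph{Conclusion.} Thus $\partial_2\partial_3\ln G=0$, so $\ln G=a(x_2)+b(x_3)$ and $G=P(x_2)Q(x_3)$ with $P,Q>0$. The single-variable reparametrizations $\tilde x_2=\int\sqrt{P}\,dx_2$ and $\tilde x_3=\int\sqrt{\phi}\,dx_3$ preserve the diagonal form and the coordinate directions, and transform the metric into $g=dx_1^2+e^{\int_c^{x_1}H}\,q(x_3)^2\,dx_2^2+e^{\int_c^{x_1}H}\,dx_3^2$, where $q^2:=Q$ is read in the new $x_3$. Reading off $E_i=\tfrac{1}{\sqrt{g_{ii}}}\partial_i$ gives the stated frame, and $R(E_1,E_1)=R_{11}=-E_1H-\tfrac{H^2}{2}=-\partial_1H-\tfrac{H^2}{2}$ follows from (\ref{rc4}) with $\alpha=0$.

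I expect the separation step to be the crux. The frame identities of Lemma \ref{3c} are by themselves consistent with an arbitrary leaf metric $f(x_2,x_3)^2dx_2^2+dx_3^2$, so they cannot produce the warped form; it is precisely the extra rigidity carried by $w$ through the $(E_2,E_2)$ Hessian identity, combined with $E_3w\neq0$, that yields $E_2\beta=0$. This is exactly the place where the non-Ricci-eigen analysis emphasized in the introduction does its work.
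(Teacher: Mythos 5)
Your proof is correct, and its crux differs from the paper's in an interesting way. Both arguments share the same normalization skeleton: use $\alpha=0$ and (\ref{e1f545}) to get $\partial_3 g_{11}=0$ and rescale to $g_{11}=1$, integrate $\partial_1\ln g_{22}=\partial_1\ln g_{33}=H(x_1)$ to peel off the factor $e^{\int_c^{x_1}H}$, and absorb one-variable factors by coordinate changes. Where you diverge is the separation step. The paper uses two components of (\ref{hpwdef}): the $(E_1,E_3)$-component, which with $\alpha=0$ gives $E_1E_3w=0$ and hence $\partial_3 w=e^{\frac{1}{2}\int H}q(x_3)$ (so the paper's $q$ is canonically the normalized leaf derivative of the potential), and then the equality $\nabla dw(E_2,E_2)=\nabla dw(E_3,E_3)$, which in coordinates becomes $\frac{\partial_3 r}{r}=\frac{\partial_3 q}{q}$ and integrates to $r(x_2,x_3)=q(x_3)p_1(x_2)$. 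You instead differentiate the single identity $\nabla dw(E_2,E_2)=\frac{w}{m}(\lambda_2-\lambda)$ along $E_2$, using $E_2w=E_2E_1w=E_2E_3w=E_2H=E_2\lambda_2=0$ (the latter from (\ref{e2lambda}) and Lemma \ref{3c}) to isolate $(E_2\beta)(E_3w)=0$, and then invoke $E_3w\neq 0$ from the non-Ricci-eigen hypothesis to conclude $E_2\beta=0$, i.e. $\partial_2\partial_3\ln G=0$, which yields the separation abstractly. Each route buys something: the paper's yields the extra geometric identification of $q$ with $e^{-\frac{1}{2}\int H}\partial_3 w$ (not needed for the statement, and re-derived later where it matters, e.g. $Q=Cp'$ in the proof of Lemma \ref{e1mu2not0}); yours makes explicit that the true mechanism is $E_2\beta=0$, a fact the paper establishes in the complementary case $\alpha\neq 0$ (Lemma \ref{41d}) by the argument $0=\partial_2R_{11}=2\alpha\,\partial_2\beta$, which visibly fails at $\alpha=0$ --- so your use of the potential to recover $E_2\beta=0$ precisely where the curvature argument breaks down is a genuine, and correct, alternative.
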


\begin{proof} We start from (\ref{16s}).
As $\alpha=0$,  by (\ref{e1f545}) $\partial_3 g_{11}=0$.
By replacing $x_1$ by a new variable, which we still denote by $x_1$,  we can replace $g_{11}(x_1)dx_1^2$ by $dx_1^2$ and we have
\begin{align*}
  g = dx_1^2 +    g_{33}(x_1, x_3) k(x_2, x_3) dx_2^2  + g_{33}(x_1, x_3) dx_3^2.
\end{align*}
 We remark that when we replace coordinates, it may affect statements proved in previous ccordinates,  but by checking  through the unchangeable frame $\{ E_i\} $, the statements still can be verified in new coordinates.

Now, by definition, $ H(x_1)=2g(E_3, \nabla_{E_3} E_1) =\partial_1\ln g_{33}  $ so that $ g_{33}=h_2(x_3)e^{ \int_c^{x_1} H(u_1) du_1}$  for a function $h_2$ and a constant $c$.
We replace $h_2(x_3) dx_3^2$ by $dx_3^2$, as done above, and write $(r(x_2, x_3))^2 $ for $k(x_2, x_3) h_2(x_3) $ so that
\begin{align} \label{gg1}
  g = dx_1^2 +    e^{ \int H(x_1)} (r(x_2, x_3))^2  dx_2^2  + e^{ \int H(x_1)} dx_3^2
\end{align}
 with $E_1 = \frac{\partial}{\partial x_1}$ and $E_3 =   e^{ -\frac{1}{2}\int H(x_1)} \frac{\partial}{\partial x_3}.$
 From {\rm (\ref{hpwdef})} and $\alpha =0$ we have $0=\nabla d w (E_1, E_3)= E_1 E_3 w $, which gives
   $\partial_1  (e^{ -\int \frac{H(x_1)}{2}} \partial_3w)=0 .$ From this for a function $q$  we get
 \begin{equation} \label{34a}
\partial_3w=    e^{ \int \frac{H(x_1)}{2}}q(x_3).
\end{equation}
From {\rm (\ref{hpwdef})}, we also have $\nabla d w (E_2, E_2)= \nabla d w (E_3, E_3) $ which reduces to $ - \langle \nabla_{E_2} E_2, E_3\rangle E_3 w=E_3E_3 w$. This equation gives
  $\frac{\partial_3 g_{22}}{2 g_{22} \sqrt{g_{33}}}\partial_3 w=\partial_3(\frac{\partial_3w}{\sqrt{g_{33}}}) =  \partial_3q $, where $g_{ii}$ means the coefficient of $dx_i^2$ in (\ref{gg1}). From this we get
$\frac{\partial_3 r}{r}= \frac{\partial_3 q}{q} .  $  Then $r(x_2, x_3)=q (x_3) p_1( x_2)$ for a function  $p_1$.
Replacing  $p_1^2(x_2)dx_2^2$ by  $dx_2^2$,
\begin{align*}
  g = dx_1^2 +    e^{\int H(x_1)} (q(x_3))^2  dx_2^2  + e^{ \int H(x_1)} dx_3^2.
\end{align*}
From (\ref{rc4}) we get $R_{11}$.
\end{proof}

 \begin{lemma} \label{41d}
 Under the hypothesis of Lemma \ref{3c}, we assume that  $\alpha \neq 0$.
 Then there exists locally a coordinate system $(x_1, x_2, x_3)$ in which
\begin{equation} \label{32av}
g =g_{11}(x_1,  x_3) dx_1^2 +     g_{33}(x_1, x_3) v(x_3) dx_2^2  +  g_{33}(x_1, x_3)  dx_3^2,
\end{equation}
for a function $v$, with $E_1 = \frac{\partial_1}{\sqrt{g_{11}}}$, $E_2=\frac{\partial_2}{\sqrt{g_{33}v}}$, $E_3=\frac{\partial_3}{\sqrt{g_{33}}}$ and
\begin{equation} \label{32ajh}
\partial_3 g_{11}=c_{3}(x_1)g_{33}\sqrt{g_{11} \cdot v}.
\end{equation}
for a function $c_3 \neq 0$.
 \end{lemma}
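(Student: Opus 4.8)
The plan is to begin with the metric (\ref{16s}) of Lemma \ref{3c}, read off $\alpha$ and $\beta$ from the structure equations (\ref{0ca2}) via the connection formula (\ref{e1f545}), and then use the standing hypothesis $\alpha\neq 0$ together with the relations (\ref{ef2}) first to eliminate the $x_2$-dependence of $k$ and afterwards to establish (\ref{32ajh}).

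First I would record the two coefficients. From $\nabla_{E_1}E_1=-\alpha E_3$ and (\ref{e1f545}),
\[
\alpha=\frac{\partial_3 g_{11}}{2g_{11}\sqrt{g_{33}}},
\]
which depends only on $x_1,x_3$ since $g_{11},g_{33}$ do. From $\nabla_{E_2}E_2=-\frac{H}{2}E_1+\beta E_3$ and $g_{22}=g_{33}k$,
\[
\beta=-\frac{\partial_3 g_{22}}{2g_{22}\sqrt{g_{33}}}=-\frac{1}{2\sqrt{g_{33}}}\bigl(\partial_3\ln g_{33}+\partial_3\ln k\bigr),
\]
so a priori $\beta$ carries $x_2$-dependence through $\partial_3\ln k$.

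Next I would invoke (\ref{ef2}). Because $\alpha\neq 0$, the relation $E_3\alpha+\alpha^2+\alpha\beta=0$ solves to $\beta=-\alpha-E_3\alpha/\alpha$; as $\alpha=\alpha(x_1,x_3)$ and $E_3=g_{33}^{-1/2}\partial_3$, the right side is a function of $x_1,x_3$ alone, hence $\partial_2\beta=0$. Feeding this back into the $\beta$-formula and using that $g_{33}$ is independent of $x_2$ forces $\partial_2\partial_3\ln k=0$, whence $k(x_2,x_3)=k_1(x_2)k_2(x_3)$. Replacing $k_1(x_2)\,dx_2^2$ by $dx_2^2$ --- a change of the $x_2$-coordinate that leaves $g_{11},g_{33}$, the vector fields $E_1,E_3$, and the adapted frame untouched --- brings $g$ to the form (\ref{32av}) with $v:=k_2$. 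This is where $\alpha\neq 0$ is essential and where the argument diverges from the case $\alpha=0$ of Lemma \ref{41d3}.

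Finally, for (\ref{32ajh}) I would set $c_3:=\partial_3 g_{11}/(g_{33}\sqrt{g_{11}v})=2\alpha\sqrt{g_{11}}/\sqrt{g_{33}v}$, which is nonzero since $\alpha\neq 0$ and clearly $x_2$-independent; it remains to prove $\partial_3 c_3=0$. Writing $a=\ln g_{11}$, $b=\ln g_{33}$ and subscripting $\partial_3$, a short computation shows $c_3=a_3\,e^{a/2-b}v^{-1/2}$, so that $\partial_3 c_3=0$ is equivalent to
\[
a_{33}-a_3 b_3+\tfrac12 a_3^2-\tfrac12 a_3(\ln v)'=0,
\]
and this is precisely $E_3\alpha+\alpha^2+\alpha\beta=0$ expressed in terms of $a,b,v$. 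Hence $c_3=c_3(x_1)$ and (\ref{32ajh}) follows. The main obstacle is organizational rather than conceptual: one must arrange the bookkeeping so that the single scalar equation $E_3\alpha+\alpha^2+\alpha\beta=0$ simultaneously yields the splitting of $k$ (through $\partial_2\beta=0$) and the $x_1$-only dependence of $c_3$ (through $\partial_3 c_3=0$).
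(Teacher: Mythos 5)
Your proof is correct and follows the same skeleton as the paper's: establish $\partial_2\beta=0$, deduce the splitting $k(x_2,x_3)=q(x_2)v(x_3)$, rescale $x_2$ to reach (\ref{32av}), and then convert the structure equation $E_3\alpha+\alpha^2+\alpha\beta=0$ of (\ref{ef2}) into the statement that $c_3:=\partial_3 g_{11}/(g_{33}\sqrt{g_{11}v})$ is killed by $\partial_3$; your direct verification of $\partial_3 c_3=0$ is exactly the paper's integration of $\partial_3\ln\{\alpha^2 g_{11}/g_{22}\}=0$ run in reverse, and both get $c_3\neq 0$ from $\alpha\neq 0$. The one place you genuinely diverge is the proof of $\partial_2\beta=0$. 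The paper derives it from curvature data: it combines $\partial_2 R_{11}=0$ (which rests on (\ref{e2lambda}), i.e.\ on Lemma \ref{elambda}) with the formula for $R_{11}$ in (\ref{rc4}) and with $E_2\alpha=0$, and then cancels the factor $2\alpha\neq 0$. You instead solve the first equation of (\ref{ef2}) for $\beta=-\alpha-E_3\alpha/\alpha$ --- legitimate precisely because $\alpha\neq 0$ --- and observe that the right-hand side is manifestly a function of $(x_1,x_3)$ alone, since $\alpha=\partial_3 g_{11}/(2g_{11}\sqrt{g_{33}})$ and $E_3=g_{33}^{-1/2}\partial_3$ involve only $g_{11},g_{33}$, which are $x_2$-independent by (\ref{16s}). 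Your route is slightly more economical: it stays entirely within the frame equations (\ref{0ca2}), (\ref{ef2}), (\ref{e1f545}) supplied by Lemma \ref{3c} and never needs the Ricci-eigenvalue identity (\ref{e2lambda}), and it makes transparent that $\alpha\neq 0$ is used twice for the same purpose, namely to extract information from the single ODE in (\ref{ef2}) --- first about $\beta$, then about $c_3$. Both arguments are sound, and all steps after $\partial_2\beta=0$ coincide.
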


\begin{proof} We again start from (\ref{16s}).
By (\ref{rc4}), (\ref{ef2}) and (\ref{e2lambda}), $0= \partial_2R_{11} = \partial_2 \{-\frac{1}{\sqrt{g_{11}}}\partial_1H\} +2\alpha \partial_2\beta= 2\alpha \partial_2\beta .$
As $\alpha \neq 0$,  $\partial_2 \beta=0.$
From $\beta=\langle\nabla_{E_2} E_2, E_3\rangle $ and (\ref{e1f545}), we get $-2\beta(x_1, x_3)\sqrt{g_{33}}(x_1, x_3)=\partial_3 (\ln g_{22} ) =\partial_3 (\ln g_{33}(x_1, x_3)  +  \ln k(x_2, x_3)).  $

Then $\partial_2 \partial_3 ( \ln k(x_2, x_3)) =0$, so  $k(x_2, x_3)= q(x_2) v(x_3)  $ for functions $v$ and $q$.
Replacing $q(x_2)dx_2^2$ by $dx_2^2$, we may write
$g =g_{11}(x_1,  x_3)dx_1^2 +     g_{33}(x_1, x_3) v(x_3) dx_2^2  +  g_{33}(x_1, x_3)  dx_3^2.$

From (\ref{ef2}) and (\ref{e1f545}),  we get
 $0=2 \sqrt{g_{33}}( \frac{E_3 \alpha}{\alpha} + \alpha +\beta) =  ( \partial_3 \ln  (\alpha^2) + \partial_3 \ln g_{11} -\partial_3 \ln g_{22})  $.
 We get  $\partial_3\left\{ \ln  \frac{(\alpha^2 { g_{11}} )}{ {g_{22}}} \right\} =0$  or equivalently  $\partial_3  \left\{ \frac{(\partial_3 g_{11})^2}{ {g_{11}g_{22} g_{33}}}\right\}  =0.$ So, we get  (\ref{32ajh})
 for a function $c_3$.
 As $\alpha \neq 0$, $\partial_3 g_{11} \neq 0 $. So, $c_3 \neq 0$.
\end{proof}

\section{When $\nabla w$ is not a Ricci-eigen field and $E_1 \mu_2  \neq 0$.  }
For a $(\lambda,3+m)$-Einstein manifold $(M,g,w)$, we are assuming that $\lambda_1 \neq  \lambda_2= \lambda_3 $, $\nabla w$ is not a Ricci-eigen field.
Then there are two possible cases by (\ref{core}): for an adapted frame field $\{E_i\}$ of $g$ in the form of (\ref{16s}),
  \begin{itemize}
     \item $ E_3\mu_1-\alpha(\mu_2-\mu_1)=0$
\item $E_1\mu_2=0$
        \end{itemize}
In this section, we consider the first case. To avoid redundant discussion, it is assumed that $E_1\mu_2\neq 0$.
First we show that $\alpha$ must be zero.

\begin{lemma} \label{e1mu2not01}
   Let $(M,g,w)$ be a $(\lambda,3+m)$-Einstein manifold with Ricci-eigen values $\lambda_1\neq\lambda_2=\lambda_3$.  Suppose  that  $\nabla w$ is not a Ricci-eigen vector field and that $E_1\mu_2\neq 0$
for an adapted field $\{E_i\}$ in an open subset $\mathcal{U}$ of $\{\nabla w\neq 0\}$.
Then $\alpha=\langle\nabla_{E_1}E_3,E_1\rangle=0$.
\end{lemma}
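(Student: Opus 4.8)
The plan is to argue by contradiction: assuming $\alpha\neq 0$ I will derive $H=0$, which is impossible under the hypothesis $E_1\mu_2\neq 0$. Indeed, as already observed in the proof of Lemma \ref{elambda}, $\frac{E_1\mu_2}{\mu_2-\mu_1}=-\frac{H}{2}$, so (since $\mu_1\neq\mu_2$ and $w\neq 0$) the assumption $E_1\mu_2\neq 0$ is \emph{equivalent} to $H\neq 0$; this is the fact I will contradict. First I extract a pointwise relation from the case hypothesis. Expanding $E_3\mu_1$ by means of (\ref{e3lambda1}), (\ref{e3rcompu}) and $R=\lambda_1+2\lambda_2$ gives $E_3\mu_1=w^{m+1}\frac{E_3w}{w}\big(\lambda-\frac{m+2}{2}\lambda_1\big)$, while $\mu_2-\mu_1=w^{m+1}(\lambda_2-\lambda_1)$. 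Hence the first-case relation $E_3\mu_1=\alpha(\mu_2-\mu_1)$ becomes
\[ \frac{E_3w}{w}\Big(\lambda-\tfrac{m+2}{2}\lambda_1\Big)=\alpha(\lambda_2-\lambda_1). \]
Because $\lambda_1\neq\lambda_2$ and $\alpha\neq 0$, the right side is nonzero, so $E_3w\neq 0$ and $\lambda-\frac{m+2}{2}\lambda_1\neq 0$, and the displayed identity solves $\alpha$ explicitly as a quotient built from $E_3w/w$, $\lambda-\frac{m+2}{2}\lambda_1$ and $\lambda_2-\lambda_1$.

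The decisive step is to differentiate this solved expression for $\alpha$ along $E_3$ and compare it with the \emph{structural} evolution equation $E_3\alpha=-\alpha(\alpha+\beta)$ of (\ref{ef2}). Since $\alpha\neq 0$ I may pass to the logarithmic derivative $\frac{E_3\alpha}{\alpha}$; substituting $\frac{E_3(E_3w)}{E_3w}=-\beta$ (a consequence of (\ref{hpwdef}), obtained by equating the $(E_2,E_2)$- and $(E_3,E_3)$-components of the Hessian), together with $E_3\lambda_1$ from (\ref{e3lambda1}) and $E_3\lambda_2$ from (\ref{e3lambda2}), the additive $\beta$ terms cancel against $-(\alpha+\beta)$ and the common nonzero factor $E_3w/w$ divides out. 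What remains is the purely algebraic constraint
\[ 2\lambda-m\lambda_1-2\lambda_2=0. \]

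To finish I differentiate this constraint along $E_1$ and invoke (\ref{e1lambda1})--(\ref{e1lambda2}). The coefficients of $E_1w/w$ in those formulas are exactly $2\lambda-m\lambda_1-2\lambda_2$ and $\lambda-\lambda_2-\frac{m}{2}\lambda_1$, and both vanish once the displayed constraint holds; therefore $E_1\lambda_1=H(\lambda_2-\lambda_1)$ and $E_1\lambda_2=-\frac{H}{2}(\lambda_2-\lambda_1)$. Substituting these into the differentiated constraint $m\,E_1\lambda_1+2\,E_1\lambda_2=0$ yields $(m-1)H(\lambda_2-\lambda_1)=0$; since $m>1$ and $\lambda_1\neq\lambda_2$ this forces $H=0$, contradicting $E_1\mu_2\neq 0$. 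Hence $\alpha=0$.

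I expect the main difficulty to be diagnostic rather than computational. The whole first-order system --- the Hessian equations for $w$, the eigenvalue derivatives of Lemma \ref{elambda}, the connection coefficients (\ref{0ca2})--(\ref{ef2}), and the Codazzi relations of Lemma \ref{derdlem} --- turns out to be \emph{mutually consistent} irrespective of whether $\alpha$ vanishes, so the obvious integrability tests (applying $[E_1,E_3]$ to $w$, to $\lambda_1$, or to $\beta$) all collapse to $0=0$ and yield nothing. The single genuinely new equation is produced only by matching the two independent expressions for $E_3\alpha$ --- the one from differentiating the solved form of $\alpha$ and the structural one (\ref{ef2}) --- and even then the incompatibility with $H\neq 0$ does not surface until that algebraic constraint is differentiated once more along $E_1$. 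Recognizing that this particular two-step differentiation is what breaks the symmetry is the crux of the argument.
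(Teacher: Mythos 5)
Your proof is correct, and it takes a genuinely different route from the paper's. I checked your key computation: writing $A=\lambda-\frac{m+2}{2}\lambda_1$, $B=\lambda_2-\lambda_1$, the case relation $E_3\mu_1=\alpha(\mu_2-\mu_1)$ indeed gives $\alpha=\frac{E_3w}{w}\cdot\frac{A}{B}$; taking the logarithmic $E_3$-derivative, using $E_3E_3w=-\beta\,E_3w$, $\frac{E_3A}{A}=-(m+2)\frac{E_3w}{w}$ and $E_3B=C\,\frac{E_3w}{w}$ with $C:=\frac{3m+4}{2}\lambda_1-\lambda-(m+1)\lambda_2$ (both from Lemma \ref{elambda}), and equating with $\frac{E_3\alpha}{\alpha}=-\alpha-\beta$ from (\ref{ef2}), the $\beta$'s cancel, the nonzero factor $\frac{E_3w}{w}$ divides out, and the residual identity $(m+3)B+C=A$ simplifies exactly to $2\lambda-m\lambda_1-2\lambda_2=0$. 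The paper reaches this same pivotal identity by an entirely different mechanism: it passes to the coordinates of Lemma \ref{41d} (available since $\alpha\neq0$), integrates several ODEs along $x_3$ to produce functions $c_3,c_4,c_6,c_7,c_8$ of $x_1$ alone, and combines them into the algebraic relation $4c_4c_6c_8/c_3=(c_8-c_7w)^2$, concluding that $c_7\neq0$ would force $w=w(x_1)$ (making $\nabla w$ Ricci-eigen), so $c_7=w^{m+1}(2\lambda-m\lambda_1-2\lambda_2)=0$. Your frame-level cross-differentiation avoids that coordinate machinery entirely and needs only Lemmas \ref{3c} and \ref{elambda}, so it is shorter and more intrinsic; what the paper's version buys is that the setup (\ref{32av})--(\ref{32ajh}) it relies on is reused in Section 5. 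One economy you missed at the end: since $\mu_2=\frac{w^{m+1}}{2(m-1)}(2\lambda-m\lambda_1-2\lambda_2)$, your constraint already says $\mu_2\equiv 0$, hence $E_1\mu_2=0$, an immediate contradiction --- this is how the paper finishes. Your extra $E_1$-differentiation giving $(m-1)H(\lambda_2-\lambda_1)=0$ and hence $H=0$ is also valid (it rests on the correct equivalence $E_1\mu_2\neq0\Leftrightarrow H\neq0$, from $E_1\mu_2=-\frac{H}{2}w^{m+1}(\lambda_2-\lambda_1)$), but it is a detour.
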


\begin{proof}
 We shall show that there is no metric with $\alpha\neq0$.
Suppose $\alpha \neq 0$. Then we may use $g$ in the form of (\ref{32av}).
From {\rm (\ref{hpwdef})}, we also have $\nabla d w (E_2, E_2)= \nabla d w (E_3, E_3) $ which reduces by (\ref{0ca2})   to $ - \beta E_3 w=E_3E_3 w$.
  Thus we have $\frac{\partial_3g_{22}}{2g_{22}}(E_3w)= \partial_3(E_3w)$.
By integrating, we obtain $E_3w=c_4(x_1)\sqrt{g_{22}}$ for a function $c_4$. As $g_{22}=v(x_3)g_{33}$, we get
\begin{align}
  \partial_3w=c_4(x_1)g_{33}\sqrt{v}. \label{partial3w2}
\end{align}

 $E_3\mu_1-\alpha(\mu_2-\mu_1)=0$ gives $\partial_3\mu_1-\frac{\partial_3g_{11}}{2g_{11}}(\mu_2-\mu_1)=0$. Since $\partial_3\mu_2=0$, we get $2\partial_3\{\ln(\mu_2-\mu_1)\}
+\partial_3(\ln g_{11})=0$.
As  we  have  $\partial_2\mu_1=\partial_2\mu_2=0$ from (\ref{e2lambda}), the integration of the above gives
\begin{align}
  (\mu_2-\mu_1)\sqrt{g_{11}}=c_6(x_1) \label{mu2mu1}
\end{align}
for a function $c_6(x_1)\neq 0$. From (\ref{e3lambda1}) and (\ref{e3lambda2}), we have
\begin{align}
 {E_3\{2\lambda-m\lambda_1-2\lambda_2\}}+ (m+1)\frac{E_3w}{w}({2\lambda-m\lambda_1-2\lambda_2})=0.
\end{align}
Then, $E_3[w^{m+1}(2\lambda-m\lambda_1-2\lambda_2)] =0$, so for a function $c_7(x_1)$ we get
\begin{align} \label{356}
  w^{m+1}(2\lambda-m\lambda_1-2\lambda_2)=c_7(x_1).
\end{align}
 But $\lambda_2=\lambda_1+\frac{c_6}{\sqrt{g_{11}}}w^{-m-1}$ by (\ref{mu2mu1}). So we get
\begin{align*}
  w^{m+1}\{2\lambda-(m+2)\lambda_1\}=c_7+\frac{2c_6}{\sqrt{g_{11}}}.
\end{align*}
Now (\ref{e3lambda1}) gives $E_3[w^{m+2}\{ 2\lambda-(m+2)\lambda_1 \}]=0$, so
$w^{m+2}\{ 2\lambda-(m+2)\lambda_1 \}=c_8(x_1)$ for a function $c_8$. Therefore,
\begin{align} \label{350}
  \frac{c_8}{w}=c_7+\frac{2c_6}{\sqrt{g_{11}}}.
\end{align}
Taking $\partial_3$-derivative, we get $\frac{c_8}{w^2}(\partial_3w)=\frac{c_6}{g_{11}\sqrt{g_{11}}}(\partial_3g_{11})$. Apply (\ref{32ajh}) and (\ref{partial3w2}) to this, we get
\begin{align} \label{351}
  \frac{c_4c_8}{w^2}=\frac{c_3c_6}{g_{11}}.
\end{align}
Taking square of (\ref{350}),  $(\frac{c_8}{w}-c_7)^2=\frac{4c_6^2}{{g_{11}}}$. Recall that $c_3 c_6 \neq 0$. Put this into (\ref{351}), we get
$\frac{c_4c_8}{w^2}=\frac{c_3}{4c_6}(\frac{c_8}{w}-c_7)^2 $ so that $c_4c_8\frac{4c_6}{c_3}=(c_8-c_7w)^2 $. As
$c_i$'s in this formula depend only on $x_1$,
 if $c_7\neq 0$, then $w$ is a function of $x_1$ only, which means that $\nabla w$ is a Ricci-eigen field, a contradiction. Thus $c_7=0$, By (\ref{356}) $2\lambda-m\lambda_1-2\lambda_2=0$. But note that $\mu_2=\frac{w^{m+1}}{2(m-1)}(2\lambda-2\lambda_2
-m\lambda_1)$. So, $\mu_2=0$. This is a contradiction to the hypothesis $E_1\mu_2\neq 0$.
\end{proof}

Now we analyze $(g, w)$.

\begin{lemma} \label{e1mu2not0}
   Let $(M,g,w)$ be a $(\lambda,3+m)$-Einstein manifold with Ricci-eigen values $\lambda_1\neq\lambda_2=\lambda_3$.
      Suppose  that  $\nabla w$ is not a Ricci-eigen vector field and that $E_1\mu_2\neq 0$
for an adapted field $\{E_i\}$ in an open subset $\mathcal{U}$ of $\{\nabla w\neq 0\}$.

    Then there exist local coordinates $(x_1,x_2,x_3)$ in which $(g,w)$ can be as follows:
\begin{align}
  g=dx_1^2+(\eta(x_1))^2 (p'(x_3))^2dx_2^2+ (\eta(x_1))^2dx_3^2 \label{result1a}
\end{align}
where $p(x_3)$ is a non-constant positive solution of
\begin{align}
  (p')^2+\rho p^2+\frac{2a}{m-1}p^{1-m}=\frac{\mu}{m-1} \label{koeq}
\end{align}
 for  constants $a\neq0$, $\rho $, $\mu$ and $\eta:=\eta(x_1)$ is a non-constant positive solution of
 \begin{align}\label{eta3}
   (\eta^{'})^2+\frac{\lambda}{m+2}\eta^2=\rho .
\end{align}
And the potential function $w=p(x_3) \eta(x_1)$. Conversely, any metric $g$ and $w$ in the above form  satisfy that
 $\nabla w$ is not a Ricci-eigen vector field,
$\lambda_1\neq\lambda_2=\lambda_3$, $E_1\mu_2\neq 0$ and {\rm {\rm (\ref{hpwdef})}}.

\end{lemma}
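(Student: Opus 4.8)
The plan is to start from the structural facts already established. Since we are in the first case of (\ref{core}) with $E_1\mu_2\neq 0$, Lemma \ref{e1mu2not01} gives $\alpha=0$, so Lemma \ref{41d3} applies and I may take coordinates with $g=dx_1^2+e^{\int_c^{x_1}H}\,q(x_3)^2\,dx_2^2+e^{\int_c^{x_1}H}\,dx_3^2$, $E_1=\partial_1$, $E_3=e^{-\frac12\int H}\partial_3$, together with $\partial_3 w=e^{\frac12\int H}q(x_3)$ from (\ref{34a}). Writing $\eta(x_1):=e^{\frac12\int_c^{x_1}H}$, so $H=2\eta'/\eta$ and $R_{11}=-\partial_1H-\tfrac{H^2}{2}=-2\eta''/\eta$, the metric becomes $g=dx_1^2+\eta^2q^2\,dx_2^2+\eta^2\,dx_3^2$. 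Because $E_2w=0$ and $\partial_3w=\eta q$, I may write $w=\eta(x_1)P(x_3)+G(x_1)$ with $P'=q$. Substituting into the $(E_1,E_1)$ component $\partial_1^2w=\frac{w}{m}(\lambda_1-\lambda)$ of (\ref{hpwdef}) and separating the $P$-dependent part (legitimate since $q=E_3w\neq0$, as $\nabla w$ is not Ricci-eigen) forces $\eta''=-\frac{\lambda}{m+2}\eta$; integrating gives (\ref{eta3}) with a constant $\rho$, and also yields $\lambda_1=\frac{2\lambda}{m+2}$ and $G''=-\frac{\lambda}{m+2}G$, so that $G=c_1\eta+c_2\tilde\eta$ for the second solution $\tilde\eta$ of $y''=-\frac{\lambda}{m+2}y$.

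The decisive step is to show that $w$ is a genuine product, i.e. that the $\tilde\eta$-part of $G$ is absent. Here I would exploit that the present case is $E_3\mu_1-\alpha(\mu_2-\mu_1)=0$, which with $\alpha=0$ reads $E_3\mu_1=0$. Using the Ricci formulas (\ref{rc4}) of Lemma \ref{3c} one computes $\lambda_1-\lambda_2=\frac1{\eta^2}\big(\rho+\frac{q''}{q}\big)$, whence, since $\lambda_1=\frac{2\lambda}{m+2}$, one gets $\mu_1=\frac{m}{m-1}\,w^{m+1}(\lambda_1-\lambda_2)=\frac{m}{m-1}\,\frac{w^{m+1}}{\eta^2}\,T$, where $T:=\rho+\frac{q''}{q}$ depends on $x_3$ alone. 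Writing $w=a(x_3)\eta+b\,\tilde\eta$ with $a=P+c_1$ and $b=c_2$ constant, the condition $\partial_3\mu_1=0$ becomes $[(m+1)a'T+aT']\eta+bT'\tilde\eta=0$; linear independence of $\eta,\tilde\eta$ separates it into $(m+1)a'T+aT'=0$ and $bT'=0$. If $b\neq0$ then $T'=0$, and then $(m+1)a'T=0$ with $a'=q\neq0$ forces $T=0$, hence $\mu_1=0$, contradicting $\lambda_1\neq\lambda_2$ and $w\neq0$. Thus $b=0$ and $w=p(x_3)\eta(x_1)$ with $p:=a$, $p'=q$, which puts $g$ in the form (\ref{result1a}).

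To finish I would derive (\ref{koeq}). The surviving relation $(m+1)p'T+pT'=0$ integrates to $T=\rho+\frac{p'''}{p'}=C\,p^{-m-1}$ for a constant $C$, and integrating once more gives $p''=-\rho p-\frac{C}{m}p^{-m}+C_2$. Substituting $w=p\eta$ into the $(E_3,E_3)$ component of (\ref{hpwdef}), which after the $\eta$-terms cancel reduces to $m p'p''+(m+1)\rho p p'+p'''p=0$, kills the integration constant $C_2$. Setting $a:=-C/m$ (nonzero, since $a=0$ would give $T=0$ and $\mu_1=0$) yields $p''=-\rho p+a\,p^{-m}$, and multiplying by $2p'$ and integrating produces exactly (\ref{koeq}); comparison with the conserved quantity (\ref{consteq}) identifies the integration constant with $\frac{\mu}{m-1}$, $\mu$ being the fiber constant.

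The converse is a direct verification: given $g$, $w=p\eta$ and the two ODEs (\ref{eta3}), (\ref{koeq}), one computes $\nabla dw$ and $Rc$ in the adapted frame and checks (\ref{hpwdef}), notes that $\nabla w=p\eta'E_1+p'E_3$ has nonzero $E_1$- and $E_3$-components and so is not Ricci-eigen because $\lambda_1\neq\lambda_2$, and evaluates $E_1\mu_2\neq0$. I expect the genuine obstacle to be the decisive step above, namely ruling out the second homogeneous solution $\tilde\eta$ so that $w$ is truly a product; this is precisely where all the hypotheses of this case ($\alpha=0$, hence $E_3\mu_1=0$, together with $\lambda_1\neq\lambda_2$) must be used simultaneously, after which the ODE manipulations are routine.
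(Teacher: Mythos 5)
Your proposal is correct, and while it relies on the same preparatory results as the paper (Lemma \ref{e1mu2not01} for $\alpha=0$ and Lemma \ref{41d3} for the coordinates), it handles the two decisive steps by a genuinely different mechanism. The paper expands $E_3\mu_1=0$ using the derivative formulas of Lemma \ref{elambda} to get $\lambda_1=\frac{2\lambda}{m+2}$, and then obtains the product structure in one stroke: $E_1\lambda_1=0$ together with (\ref{e1lambda1}) factors as $\left(2\frac{E_1w}{w}-H\right)\left(\frac{2\lambda}{m+2}-\lambda_2\right)=0$, so $\lambda_1\neq\lambda_2$ forces $\frac{\partial_1 w}{w}=\frac{H}{2}$, i.e. $w=\eta p$. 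You instead integrate (\ref{34a}) to $w=\eta P+G$, obtain $\eta''=-\frac{\lambda}{m+2}\eta$ and $\lambda_1=\frac{2\lambda}{m+2}$ by separating variables in the $(E_1,E_1)$ Hessian equation (without invoking the case hypothesis at this stage), and then spend $E_3\mu_1=0$ on eliminating the $\tilde\eta$-component of $G$ by linear independence; the formula $\mu_1=\frac{m}{m-1}w^{m+1}(\lambda_1-\lambda_2)=\frac{m}{m-1}\frac{w^{m+1}}{\eta^2}T$ you use there is valid precisely because $2\lambda=(m+2)\lambda_1$, and your identity $\lambda_1-\lambda_2=\frac{1}{\eta^2}\left(\rho+\frac{q''}{q}\right)$ checks out against (\ref{rc4}). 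The ODE derivation also differs: the paper gets (\ref{pppp}) from the $(E_3,E_3)$ equation and integrates it twice, while you integrate the surviving relation $(m+1)p'T+pT'=0$ to $T=Cp^{-m-1}$ and use the $(E_3,E_3)$ equation only to kill the constant $C_2$ (substitution indeed leaves exactly $mC_2p'=0$); the identification of the integration constant with $\frac{\mu}{m-1}$ via (\ref{consteq}) is the same routine computation the paper performs around (\ref{pppp2}), and it does confirm your assertion. The trade-off: the paper's route is shorter once Lemma \ref{elambda} is in hand, whereas yours bypasses Lemma \ref{elambda} altogether and makes visible exactly where a non-product potential would live (the second solution $\tilde\eta$) and why this case excludes it. Two points you leave implicit but should state: $\eta$ is non-constant (otherwise $E_1w=p\eta'=0$, contradicting that $\nabla w$ is not Ricci-eigen), and positivity of $p$ and $\eta$ follows from $w>0$ after adjusting signs.
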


\begin{proof} As $E_1\mu_2 \neq 0$,
we have $ E_3\mu_1-\alpha(\mu_2-\mu_1)=0$. As  $\alpha=0$ from Lemma \ref{e1mu2not01}, $E_3\mu_1=0$. Calculating this using Lemma \ref{elambda},
\begin{align*}
  E_3\mu_1=&(m+1)\lambda_1 w^m(E_3w)+w^{m+1}(E_3\lambda_1)-\frac{m}{2(m-1)}w^{m+1}(E_3R)\\
  &+\frac{m+1}{2(m-1)}w^m(E_3w)(2\lambda-mR)\\
=  &\left(\lambda-\frac{m+2}{2}\lambda_1\right)w^m (E_3w)=0.
\end{align*}
Thus we obtain $\lambda_1=\frac{2\lambda}{m+2}$. We may assume $g$ as in (\ref{gg5}).
From the value of $\lambda_1=R_{11}$ in  Lemma \ref{41d3}, $H(x_1)$ satisfies $-\partial_1H-\frac{H^2}{2}=\frac{2\lambda}{m+2}$.
We set $H=2\frac{\partial_1\eta}{\eta}$ for a function $\eta(x_1) $ so that $e^{ \int H(x_1)} = c^2 \cdot \eta^2(x_1)$ for a constant $c>0$.
And $\eta(x_1)$ satisfies
\begin{align} \label{me2g}
\partial_1 \partial_1\eta + \frac{\lambda}{m+2}\eta =0, \ \ {\rm and }  \  (\partial_1\eta)^2+\frac{\lambda}{m+2}\eta^2=\rho  \  {\rm for} \ {\rm a} \ {\rm  constant} \ \rho.
\end{align}
From  (\ref{gg5}), the metric can be written as $ g=dx_1^2+\eta^2(x_1)q^2(x_3)c^2dx_2^2+\eta^2(x_1)c^2 dx_3^2$. Replacing $c^2 dx_2^2 $  by $dx_2^2 $ and  $c^2 dx_3^2 $  by $dx_3^2 $,
we rewrite;
\begin{align} \label{me2}
  g=dx_1^2+\eta^2(x_1)Q^2(x_3)dx_2^2+\eta^2(x_1)dx_3^2.
\end{align}

As $E_1\lambda_1=0$, by (\ref{e1lambda1}),
\begin{align*}
  0=&E_1\lambda_1=\left(2\lambda-m\lambda_1-2\lambda_2\right)\frac{E_1w}{w}+H(\lambda_2-\lambda_1)\\
  =&\left(2\frac{E_1w}{w}-H\right)\left(\frac{2\lambda}{m+2}-\lambda_2\right).
\end{align*}
Since we assume $\lambda_1 \neq\lambda_2$, we have
\begin{align*}
  \frac{\partial_1 \eta}{\eta}=\frac{H}{2}=\frac{E_1w}{w}=\frac{\partial_1w}{w}.
\end{align*}
Thus $w=\eta(x_1)p( x_3)$ for a function $p$. Neither  $\eta$ nor $p$ can be constant; otherwise $\nabla w$ becomes a Ricci-eigen field. From $\nabla dw(E_2,E_2)=\nabla dw(E_3,E_3)$ and (\ref{0ca2}), we again get $E_3E_3w=-\beta(E_3w)$. We denote $\partial_3X$ by $X'$.
Since $E_3 = \frac{\partial_3}{\eta} $, we get
$\beta=-\frac{p^{''}}{p^{'}\eta}  $.

From (\ref{me2}),  $\beta= g(\nabla_{E_2} E_2, E_3)=-\frac{Q'}{Q\eta}$, so  we get $Q=Cp'$ for a constant $C$. Assigning the values obtained so far to $\nabla dw(E_3,E_3)=\frac{w}{m}(\lambda_2-\lambda)$, (\ref{rc4}) and (\ref{me2g}) give
\begin{align*}
  \frac{p''}{\eta}+\frac{\partial_1 \eta}{\eta}(p\partial_1\eta)=\frac{p\eta}{m}  \left\{\frac{\lambda}{m+2}-(\frac{\partial_1\eta}{\eta})^2-\frac{1}{\eta^2}\frac{p'''}{p'}-\lambda\right\},
\end{align*}
which reduces to
\begin{align} \label{pppp}
   \frac{m+1}{m}\rho =-\frac{p''}{p}-\frac{1}{m}\frac{p'''}{p'}.
\end{align}
 Multiplying $2mpp'$ and splitting $pp''$-terms,
\begin{align*}
2\rho  (m+1)pp'=-2p'p''-2p p'''-2(m-1)p'p''.
\end{align*}
Integrating, for a constant $\mu_0$ we get
\begin{align} \label{pppp2}
   \mu_o=\rho  (m+1) p^2+2p p''+(m-1)(p')^2.
\end{align}
Meanwhile, the constant $\mu$ of Lemma \ref{kklem} is
\begin{align*}
  \mu=&\frac{w^2}{m}\{R+(m-3)\lambda\}+(m-1)|\nabla w|^2\\
=&\frac{p^2\eta^2}{m}\left\{\left(\frac{4\lambda}{m+2}-2\left(\frac{\partial_1\eta}{\eta}\right)^2-\frac{2}{\eta^2}\frac{p'''}{p'}\right) +(m-3)\lambda\right\}  \\& +(m-1)\{p^2(\partial_1\eta)^2+(p')^2\}\\
  =&\frac{(m-2)(m+1)}{m}\rho p^2-\frac{2}{m}p^2\frac{p'''}{p'}+(m-1)(p')^2.
\end{align*}
The last equality holds by $(\partial_1\eta)^2+\frac{\lambda}{m+2}\eta^2=\rho $. Put (\ref{pppp}) into the right hand side of the above
equation, we can see that $\mu=\mu_0$.

Multiplying $p^{m-2}p'$ to (\ref{pppp2}) and integrating, we get
\begin{align}
  \frac{\mu}{m-1}p^{m-1}=\rho p^{m+1}+p^{m-1}(p')^2+\frac{2a}{m-1} \label{odep'}
\end{align}
for a constant $a$. We  obtain (\ref{koeq}). Differentiating we get
\begin{align}
  p''+\rho p=ap^{-m}. \label{koeqdif}
\end{align}

Now compute $\mu_2$ using (\ref{me2g}) and (\ref{koeqdif});
\begin{align*}
  \mu_2=&w^{m+1}\left\{\lambda_2+\frac{\lambda}{m-1}-\frac{m}{2(m-1)}R\right\}\\
=&\frac{p^{m+1}\eta^{m+1}}{m-1}\left\{\frac{\lambda}{m+2}+\left(\frac{\partial_1\eta}{\eta}\right)^2+\frac{1}{\eta^2}\frac{p'''}{p'}\right\}
=\frac{m}{1-m}a\eta^{m-1}.
\end{align*}
As  $E_1\mu_2\neq0$, $a$ cannot be zero.

 \medskip
We can easily check the converse part.
\end{proof}

Lemma \ref{e1mu2not0} brings us to the Kobayashi's solutions to the ordinary differential equation  \cite[Equation (2.2)]{Ko}, which is identical to (\ref{koeq}) if we match $n \leftrightarrow m+1$, $k \leftrightarrow \frac{\mu}{m-1}$ and $R \leftrightarrow m(m+1)\rho $.
As $\eta >0$ and  $w = \eta p$ are positive in the open dense subset of the manifold under consideration, so is $p$. Therefore in solving (\ref{koeq}), we may assume a positive initial value for $p$ just as in Kobayashi's paper.
In his study of equation, Kobayashi discussed complete conformally flat static spaces but did not mention on the completeness of metrics of the form $p'(x_3)^2dx_2^2+dx_3^2$ that we have in (\ref{result1a}).
We want to sort out
   possible complete  $(\lambda,3+m)$-Einstein manifolds under the hypothesis of Lemma \ref{e1mu2not0}.   First we state;

\begin{lemma} \label{420}
  If either  $\lambda \geq 0$, or  $\rho >0$ and $\lambda < 0$, then $(g,w)$ of  Lemma {\rm \ref{e1mu2not0}} cannot be complete.
\end{lemma}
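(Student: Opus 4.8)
The plan is to read off from Lemma~\ref{e1mu2not0} the explicit shape of $(g,w)$ and then exhibit, in each regime allowed by the hypothesis, a unit-speed geodesic of finite length along which the curvature is unbounded; completeness is then excluded by Hopf--Rinow. The first thing I would record is that \emph{in every case of the statement one has $\rho>0$}. Indeed $\eta$ is a non-constant positive solution of $(\eta')^2+\frac{\lambda}{m+2}\eta^2=\rho$ from (\ref{eta3}). If $\lambda>0$, then $(\eta')^2=\rho-\frac{\lambda}{m+2}\eta^2\ge 0$ forces $\rho>0$, since otherwise no positive $\eta$ exists; if $\lambda=0$, then $(\eta')^2=\rho$ with $\eta$ non-constant again forces $\rho>0$; and the remaining regime assumes $\rho>0$ outright. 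Thus $\rho>0$ throughout, and this is precisely the condition that makes $\eta$ reach a zero at a finite value of $x_1$ (in contrast to the complete cases (i-1),(i-2), where $\rho\le 0,\lambda<0$ keeps $\eta$ bounded away from $0$).

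Next I would analyze $\eta$ using $\eta''+\frac{\lambda}{m+2}\eta=0$, obtained by differentiating (\ref{eta3}), together with (\ref{eta3}) itself. Since $\rho>0$, at any zero of $\eta$ one has $(\eta')^2=\rho>0$, so every zero is simple. Solving explicitly, $\eta$ is (up to translation) a positive multiple of a sine, a linear function, or a $\sinh$, according to $\lambda>0$, $\lambda=0$, or $\lambda<0$; in each case the maximal interval on which $\eta>0$ has a finite endpoint $x_1^\ast$ with $\eta(x_1^\ast)=0$ and $\eta'(x_1^\ast)\neq0$. Because $g_{11}=1$ in (\ref{result1a}) and $\alpha=0$ (Lemma~\ref{e1mu2not01}) gives $\nabla_{E_1}E_1=0$, the coordinate curve $\gamma(s)=(s,x_2^0,x_3^0)$ is a unit-speed geodesic that reaches the level $\{\eta=0\}$ after finite arclength.

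The decisive step is to show that the geometry genuinely degenerates there. Using (\ref{rc4}), the value $\lambda_1=\frac{2\lambda}{m+2}$ established in the proof of Lemma~\ref{e1mu2not0}, and (\ref{koeqdif}), I would compute the repeated Ricci eigenvalue along $\gamma$ as
\[
\lambda_2=R(E_2,E_2)=\frac{2\lambda}{m+2}+\frac{ma}{p^{m+1}\eta^{2}},
\]
which may also be extracted from the identity $\mu_2=\frac{m}{1-m}a\eta^{m-1}$ together with $\mu_2=w^{m+1}(\lambda_2-\lambda_1)\cdot(\text{const})$. Along $\gamma$ the point $x_3^0$ is fixed, so $p(x_3^0)>0$ is a fixed constant; since $a\neq0$ and $\eta\to0$, we get $|\lambda_2|\to\infty$, i.e.\ the curvature blows up at finite distance along $\gamma$. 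If $(M,g)$ were complete, then by Hopf--Rinow the finite-length geodesic $\gamma$ would extend to its endpoint, a smooth point of $M$ at which the curvature is finite, contradicting the blow-up. Hence $(g,w)$ cannot be complete.

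The main obstacle I anticipate is not any individual step but making the degeneration watertight: one must verify that $\frac{ma}{p^{m+1}\eta^2}$ truly diverges (immediate here, as $p$ is constant along the chosen fiber while $\eta\to0$) and, more importantly, rule out that $\{\eta=0\}$ is a smooth interior point at which the apparent collapse is merely a coordinate artifact. Phrasing the argument through the \emph{curvature}, rather than through the degenerating coordinate frame, settles this cleanly, since the scalar and Ricci curvatures are coordinate-independent; the hypothesis $a\neq0$ (equivalently, that $g$ is not locally conformally flat, which would land us in case (3)) is exactly what forces the divergence and thereby the incompleteness.
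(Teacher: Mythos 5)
Your proof is correct, and it reaches the contradiction by a genuinely different mechanism than the paper's. The two arguments share the same starting point: under the stated hypotheses, (\ref{me2g})/(\ref{eta3}) force $\eta$ to vanish at some finite parameter value (your preliminary observation that $\rho>0$ holds in every case is a correct refinement, though the paper does not need it). From there the paper stays inside the warped-product picture $g=dx_1^2+\eta^2\tilde{g}$ with $\tilde{g}=(p')^2dx_2^2+dx_3^2$ and invokes the smoothness criterion at a zero of the warping function: $g$ can close up smoothly there only if $\tilde{g}$ has positive \emph{constant} curvature, while $k_{\tilde{g}}=-p'''/p'$ is non-constant because $a\neq 0$ and $p$ is non-constant. (Incidentally, differentiating (\ref{koeqdif}) gives $k_{\tilde{g}}=\rho+ma\,p^{-m-1}$, not $\frac{\lambda}{m+2}+ma\,p^{-m-1}$ as printed in the paper; this slip is harmless there since only non-constancy is used, and your eigenvalue formula is consistent with the corrected version.) You instead quantify the degeneration: the repeated Ricci eigenvalue $\lambda_2=\frac{2\lambda}{m+2}+\frac{ma}{p^{m+1}\eta^2}$ --- which checks out both against the warped-product curvature formulas and against $\mu_2=\frac{m}{1-m}a\eta^{m-1}$ combined with $\mu_2=-\frac{1}{m-1}w^{m+1}(\lambda_2-\lambda_1)$ --- blows up along the unit-speed $x_1$-geodesic of finite length, and Hopf--Rinow together with continuity of the Ricci tensor at the limit point yields the contradiction. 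What your route buys: it is self-contained (the smoothness-at-a-pole criterion, which the paper cites without proof, is replaced by the elementary fact that curvature is bounded near an interior point of a smooth manifold), and it rules out any smooth completion whatsoever, not merely a warped-product one. What the paper's route buys: brevity, given the standard criterion, and a clean identification of $a=0$ as exactly the borderline (round fiber, i.e.\ the locally conformally flat case (3) of Theorem \ref{maint1}). Both proofs ultimately hinge on the same two facts --- $\eta\to 0$ at finite parameter and $a\neq 0$ --- but your key step is a curvature blow-up argument where the paper's is a fiber-rigidity argument.
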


\begin{proof}
  The metric $g$ in (\ref{result1a}) can be viewed as a warped product metric $g=dx_1^2+\eta^2\tilde{g}$ where $\tilde{g}=(p'(x_3))^2dx_2^2+dx_3^2$. If either  $\lambda \geq 0$, or  $\rho >0$ and $\lambda < 0$,
from (\ref{me2g}) there exists $0\leq t_0<\infty$ such
  that $\eta(t_0)=0$. Then $g$ is smooth at $t_0$ only if $\tilde{g}$ has positive constant curvature. Differentiating (\ref{koeqdif}), the sectional curvature $k_{\tilde{g}}$ of $\tilde{g}$ satisfies
  \begin{align*}
    k_{\tilde{g}}=-\frac{p'''}{p'}=ma\cdot p^{-m-1}+\frac{\lambda}{m+2}.
  \end{align*}
So, for $\tilde{g}$ to have constant curvature, $a$ must be zero, which is a contradiction.

\end{proof}

For his equation (2.2) Kobayashi made a complete list of conditions for parameters and initial values which comprise (I) through (VI); see page 670 of \cite{Ko}.
 To solve (\ref{koeq}), we shall use that list under the match $n \leftrightarrow m+1$, $k \leftrightarrow \frac{\mu}{m-1}$ and $R \leftrightarrow m(m+1)\rho $.

In our case $a$ cannot be zero and $p$ cannot be a constant. And $\rho  \leq0$  and $\lambda < 0$ from Lemma \ref{420}.   In his case (I), $p(x_3)$ becomes  constant by his Proposition 2.2. In (III.1)$\sim$(III.3) cases;

\begin{lemma} \label{LL3k}
  If one of the conditions {\rm (III.1)$\sim$(III.3)} is satisfied, then $(g,w)$ of  Lemma {\rm \ref{e1mu2not0}} cannot be complete.
\end{lemma}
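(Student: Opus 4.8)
The plan is to read off, via the correspondence $n\leftrightarrow m+1$, $k\leftrightarrow\frac{\mu}{m-1}$, $R\leftrightarrow m(m+1)\rho$ recorded above, exactly which orbit of $(p')^2=F(p)$ each of Kobayashi's subcases (III.1)--(III.3) singles out, where $F(p)=\frac{\mu}{m-1}-\rho p^2-\frac{2a}{m-1}p^{1-m}$. Lemma \ref{420} has already reduced us to $\rho\le 0$ and $\lambda<0$, and the configurations with $a>0$ yield the complete metric (i-1); so the regime relevant to (III.1)--(III.3) is $a<0$, in which $F(0^+)=+\infty$ and the branch $0<p\le p_1$ below the smaller root of $F$ is admissible. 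I expect each of the three subcases to force $p$ onto this lower branch (or, when $F>0$ throughout, onto a globally monotone orbit), so that $p$ decreases to $0$ after at most one turning point, rather than escaping to $+\infty$ as on the branch $p\ge p_2$ that produces the complete case (i-2).

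The analytic core is a single finite-length estimate, uniform across the three subcases. Near $p=0$ one has $F(p)\sim\frac{-2a}{m-1}p^{1-m}$, which is positive since $a<0$, so $\frac{dp}{\sqrt{F(p)}}\sim C\,p^{(m-1)/2}\,dp$ with $C>0$; as $\frac{m-1}{2}>-1$ this integrates to a finite value at $0$, whence $p$ runs down to $0$ within a finite $x_3$-interval. Along this approach $(p')^2=F(p)\to+\infty$, so the coefficient $g_{22}=\eta^2(p')^2$ of $dx_2^2$ diverges while the $x_3$-direction stays at finite length: a curve with $x_1,x_2$ fixed and $x_3$ running to the endpoint has $g$-length $\eta(x_1)\int dx_3<\infty$ (here $\eta(x_1)>0$ is finite since $\rho<0,\lambda<0$) and is inextensible, because the metric does not extend smoothly across $\{p=0\}=\{w=0\}$. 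Indeed, by the formula $k_{\tilde g}=ma\,p^{-m-1}+\frac{\lambda}{m+2}$ from the proof of Lemma \ref{420}, the Gauss curvature of $\tilde g=(p')^2dx_2^2+dx_3^2$ tends to $-\infty$ as $p\to 0$. This exhibits an inextensible curve of finite length, so $(g,w)$ cannot be complete.

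The remaining work, which I expect to be the main obstacle, is the bookkeeping that pins each of (III.1)--(III.3) to the descending branch. This means comparing the prescribed value of $\mu$ with the threshold $\mu_*=(m+1)\rho(a/\rho)^{2/(m+1)}$ at which the minimum of $F$, attained at $p_*=(a/\rho)^{1/(m+1)}$, vanishes, and comparing the prescribed initial value $p(0)$ with $p_*$; precisely the complementary inequalities $p(0)>p_*$ and $\mu\le\mu_*$ carve out the complete case (i-2), so I must check that (III.1)--(III.3) each violate these in a way that leaves only the orbit descending to $0$. Once every subcase is matched to that orbit, the finite-length argument above applies verbatim and no case-by-case curvature or cone-angle analysis is needed.
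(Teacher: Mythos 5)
Your analytic core is sound and is essentially the paper's own concluding step: once the orbit of $p$ is known to reach $p=0$ at some finite $s_0$, the integrability of $dp/\sqrt{F(p)}\sim C\,p^{(m-1)/2}dp$ near $0$ gives a curve of finite $g$-length (at fixed $x_1$, $x_2$), along which the sectional curvature
$R_{2332}=-\bigl(\tfrac{\partial_1\eta}{\eta}\bigr)^2+\tfrac{1}{\eta^2}\bigl(ma\,p^{-m-1}+\tfrac{\lambda}{m+2}\bigr)$
diverges because $a\neq 0$; a complete metric cannot have unbounded curvature at finite distance from a fixed point. This is exactly how the paper finishes.

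The genuine gap is the step you yourself defer as ``remaining work'': showing that each of (III.1)$\sim$(III.3) actually forces the orbit onto the branch descending to $p=0$. That step is not peripheral bookkeeping --- it is the entire content of the lemma, and it cannot be carried out within your proposal because the conditions (III.1)$\sim$(III.3) are never stated in this paper; they exist only on page 670 of \cite{Ko}. Your reduction to $a<0$ (``the configurations with $a>0$ yield the complete metric (i-1), so the relevant regime is $a<0$'') is likewise not a proof: it tacitly assumes that Kobayashi's cases are mutually exclusive and that every $a>0$, $\rho\le 0$ configuration falls under (IV.1), which again requires the precise statements. The paper closes this gap with a single citation: Kobayashi's Proposition 2.4 asserts that under any of (III.1)$\sim$(III.3) one has $\lim_{s\to s_0}(p')^2=\infty$ for some finite $s_0$. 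The conservation law (\ref{koeq}) then forces $p\to 0$ or $p\to\infty$ as $s\to s_0$, and the paper rules out both at finite distance --- curvature blow-up if $p\to 0$, divergence of the potential $w=p\eta$ if $p\to\infty$ --- so it never needs to identify the branch, the sign of $a$, or the threshold comparisons with $\kappa_0$ and $\rho_0$ that you anticipate. If you replace your deferred matching step by the same appeal to Kobayashi's Proposition 2.4, and either add the $p\to\infty$ alternative or explicitly rule it out (for $\rho\le 0$ it needs infinite $s$-time), your argument becomes complete and coincides with the paper's.
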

\begin{proof}
  By proposition 2.4 in \cite{Ko}, under one of (III.1)$\sim$(III.3),  $\lim_{s\rightarrow s_0}(p')^2=\infty$ for some $s_0$. By (\ref{koeq}) either $\lim_{s\rightarrow s_0}p(s)=0$ or $\lim_{s\rightarrow s_0}p(s)=\infty$.
 Then either the sectional curvature $R_{2332}=-\left(\frac{\partial_1\eta}{\eta}\right)^2
+\frac{1}{\eta^2}\left(ma \cdot p^{-m-1}+\frac{\lambda}{m+2}\right)$ diverges or the potential function
$w=p\eta$ diverges at a point of finite distance from a fixed point.
So, we cannot get a complete metric.
\end{proof}

 So,  we have a shorter list of conditions for possibly complete solutions:
\begin{eqnarray}
   \textrm{ IV.1.}&\quad \textrm{ $a>0$, $\rho  \leq0$.} \ \ \ \ \ \ \ \ \ \ \ \ \ \ \ \ \label{tw11}\\
  \textrm{ IV.3.} &\quad \textrm{ $a<0$, $\rho  <0$, $\mu\leq \kappa_0$, $p(0)>\rho_0$.} \nonumber
\end{eqnarray}
Here, when  $a\rho >0$, $\rho_0=\left\{\frac{a}{ \rho }\right\}^{1/(m+1)}$ and $\kappa_0=(m+1)\rho \rho_0^2$.

The next two lemmas follow the line of argument made in Kobayashi's proposition 2.5.
\begin{lemma} \label{26}
 Assume that {\rm (IV.1)} is satisfied and $\lambda < 0$.
 Then a positive solution $p$ of  {\rm (\ref{koeq})} is defined on $\mathbb{R}$ and  $p^{'}(x_3)$ has a unique root.
 And the space $(g,w)$ of  Lemma {\rm \ref{e1mu2not0}}   is defined on $\mathbb{R}^3$ and complete.

\end{lemma}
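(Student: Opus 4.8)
The plan is to establish two things: first, that a positive solution $p$ of the ODE (\ref{koeq}) under condition (IV.1) with $\lambda<0$ extends to all of $\mathbb{R}$ and has $p'$ vanishing exactly once; second, that the resulting metric $g$ on $\mathbb{R}^3$ is geodesically complete. Since condition (IV.1) gives $a>0$ and $\rho\leq 0$, I would set $P(p):=\frac{\mu}{m-1}-\rho p^2-\frac{2a}{m-1}p^{1-m}$ so that $(p')^2=P(p)$, and analyze the phase portrait of this first-order autonomous relation. The key qualitative observation is that as $p\to 0^+$ the term $p^{1-m}$ blows up (since $m>1$), forcing $P(p)\to -\infty$, so $p$ is bounded away from $0$; and as $p\to\infty$ with $\rho\leq 0$ the term $-\rho p^2\geq 0$ keeps $P$ from forcing a finite escape. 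One shows $P$ has a unique positive root $p_{\min}$ where $p'$ vanishes, and that $P'(p_{\min})\neq 0$ so that $p$ turns around smoothly and is symmetric about the turning point.

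First I would verify that $p$ is defined on all of $\mathbb{R}$: the right-hand side of (\ref{koeqdif}), namely $p''=ap^{-m}-\rho p$, is smooth and (because $p$ stays in a region bounded away from $0$ and, as I will argue, bounded above) has at most linear growth in $p$, so by the standard continuation argument the maximal solution cannot blow up in finite $x_3$ and hence exists for all $x_3\in\mathbb{R}$. To see $p$ stays bounded above when $\rho<0$, note $P(p)\to -\infty$ as $p\to\infty$ as well, so $p$ oscillates between two positive roots $p_{\min}<p_{\max}$ of $P$, giving a periodic solution; when $\rho=0$ the term $-\rho p^2$ vanishes and $P(p)=\frac{\mu}{m-1}-\frac{2a}{m-1}p^{1-m}$ is increasing in $p$ with a single positive root $p_{\min}$, so $p'$ vanishes once and $p\to\infty$ monotonically on each side of that root. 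In either sub-case $p'$ has a unique root (one turning point per period, or the single minimum when $\rho=0$), which is the claim on $p'$.

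Next I would address completeness of $(g,w)$ on $\mathbb{R}^3$. The metric (\ref{result1a}) has the warped form $g=dx_1^2+\eta^2\tilde g$ with $\tilde g=(p')^2dx_2^2+dx_3^2$. Because $\lambda<0$, equation (\ref{eta3}) with $(\eta')^2=\rho-\frac{\lambda}{m+2}\eta^2$ has $\frac{-\lambda}{m+2}>0$, so $\eta$ is a (rescaled) hyperbolic-type solution such as $\eta=\sqrt{\tfrac{(m+2)\rho}{-\lambda}}\cosh\!\bigl(\sqrt{\tfrac{-\lambda}{m+2}}\,x_1\bigr)$ when $\rho>0$, or $\eta=ce^{\pm\sqrt{-\lambda/(m+2)}\,x_1}$ when $\rho=0$; in every case $\eta>0$ for all $x_1\in\mathbb{R}$ and $\eta$ never vanishes, so the warping factor is smooth and positive everywhere and the $x_1$-lines are complete geodesics. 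For the fiber $\tilde g$, I would check that the surface $(p')^2dx_2^2+dx_3^2$ is complete: along $x_3$ the coordinate runs over all of $\mathbb{R}$ (as $p$ is defined on $\mathbb{R}$) and the only possible degeneration is where $p'=0$; but at a simple root $x_3^0$ of $p'$ one has $p''(x_3^0)\neq 0$ (since $P'(p_{\min})\neq 0$ forces $p''\neq 0$ there), so near $x_3^0$ the fiber looks like a smooth rotationally symmetric cap and $\tilde g$ closes up smoothly, exactly as in the standard treatment of warped surfaces. Hence $\tilde g$ is complete, and a warped product over $\mathbb{R}$ with everywhere-positive smooth warping factor and complete fiber is complete.

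The main obstacle I anticipate is the behavior of the fiber metric $\tilde g$ at the turning point of $p$: one must confirm that $(p')^2dx_2^2+dx_3^2$ is genuinely a smooth complete metric there rather than a cone or puncture, which requires the turning point to be simple (so $p''\neq 0$) and the correct periodicity/range of $x_2$ to be arranged so no conical singularity appears. This is precisely the point Kobayashi left unaddressed (as the text notes), so the careful verification that $p'$ has a \emph{simple} zero — via $P'(p_{\min})\neq 0$ — and the resulting smooth closing-up of the fiber is the crux of the argument; the extension of $p$ to $\mathbb{R}$ and the non-vanishing of $\eta$ are comparatively routine consequences of $\rho\leq 0$ and $\lambda<0$.
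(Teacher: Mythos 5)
Your overall architecture (phase-plane analysis of (\ref{koeq}), global existence, a unique simple turning point of $p$, smoothing the fiber $(p')^2dx_2^2+dx_3^2$ by giving $x_2$ the period $2\pi/p''(0)$, and completeness of a warped product over a never-vanishing $\eta$) matches the paper's proof, but your treatment of the subcase $\rho<0$ rests on a sign error that breaks the argument. With $P(p)=\frac{\mu}{m-1}-\rho p^2-\frac{2a}{m-1}p^{1-m}$ and $\rho<0$, the term $-\rho p^2=|\rho|p^2$ tends to $+\infty$, so $P(p)\to+\infty$ as $p\to\infty$, not $-\infty$ as you assert (contradicting, incidentally, your own earlier remark that $-\rho p^2\ge 0$ prevents escape). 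In fact $P'(p)=-2\rho p+2ap^{-m}>0$ for all $p>0$ under (IV.1), so $P$ is strictly increasing on $(0,\infty)$ with exactly one positive root, and by (\ref{koeqdif}) one has $p''=ap^{-m}-\rho p>0$ everywhere: $p$ is strictly convex, has a unique minimum, and is unbounded on both sides. Your claimed oscillation ``between two positive roots $p_{\min}<p_{\max}$'' therefore never occurs; it is also internally inconsistent, since a nonconstant periodic $p$ has at least two critical points per period, so ``one turning point per period'' cannot yield the asserted unique root of $p'$ --- and a genuinely periodic $p$ would in fact falsify the lemma (this is exactly the mechanism by which the periodic solutions of case (V) produce incomplete metrics in Lemma \ref{5p8}). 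The correct route, which is the paper's, is uniform in $\rho\le 0$: strict convexity gives at most one root of $p'$, and a Miao--Tam style argument gives at least one (if $p'<0$ everywhere, then $p\le p(0)$ on $[0,\infty)$, hence $p''\ge ap(0)^{-m}>0$ there, forcing $p'>0$ eventually, a contradiction).

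Two secondary repairs are needed as well. First, your global-existence step invokes an upper bound on $p$ that is false when $\rho<0$; it is also unnecessary: since $p\ge p_1>0$, the right-hand side of (\ref{koeqdif}) obeys the linear bound $|ap^{-m}-\rho p|\le ap_1^{-m}+|\rho|\,p$, which already rules out finite-time blow-up (the paper instead traps $p$ below an explicit solution $h$ of $h''+\rho h=ap_1^{-m}$). Second, your formula for $\eta$ attaches the $\cosh$ solution to the case $\rho>0$, but under (IV.1) one has $\rho\le 0$; with $\lambda<0$ the $\cosh$ solution (coefficient $\sqrt{(m+2)\rho/\lambda}$) occurs precisely when $\rho<0$, whereas $\rho>0$ would give a $\sinh$-type $\eta$ vanishing at some point --- the situation already excluded by Lemma \ref{420}. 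Your conclusion that $\eta>0$ on all of $\mathbb{R}$ is correct for the cases actually at hand, but the case labeling should be fixed.
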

\begin{proof}
We may denote by $s:=x_3$.
Set $F(p) = \rho p^2+\frac{2a}{m-1}p^{1-m}$. $F(p)$ is decreasing on $p >0$ and $ \lim_{p \rightarrow 0^{+}} F(p) = +\infty$.
As $\frac{\mu}{m-1}=(p')^2+\rho p^2+\frac{2a}{m-1}p^{1-m} \geq  F(p),$ so  $p > p_1$ for some $p_1>0$.

   The equation $h^{''}+\rho h=  \frac{a}{p_1^{m}}$ has a solution $h(s)= c_1 e^{\sqrt{-\rho } s} + c_2 e^{-\sqrt{-\rho } s} +\frac{a}{p_1^{m}\rho }$ when $\rho<0$, and $h(s)= c_1 s + c_2  +\frac{as^2}{2p_1^{m}} $ when $\rho=0$.
$h$ is defined on  $\mathbb{R}$.
Then from (\ref{koeqdif}),
$(p-h)^{''} +\rho (p-h) =         \frac{a}{p^{m}}-  \frac{a}{p_1^{m}}= a\frac{1}{p^{m}p_1^{m}}( p_1^{m}- p^{m})   \leq 0 $. From $h^{'}(0)=p^{'}(0) $ and  $h(0) = p(0)$, we have $p \leq h$. As $p \geq p_1>0$,
we have $p$ defined on  $\mathbb{R}$.

Next, we can show $p^{'}(s)$ has a unique root. The following simple argument  is  from  \cite[Lemma 3.4]{MT}.
We have $p^{''} \geq \frac{a}{p^m}$ from (\ref{koeqdif}). Assume $p^{'} <0$ everywhere, then $p(s) \leq  p(0)$  for all $s \geq  0$. So  $p^{''} \geq C$ for some positive constant $C$ for $s > 0$. This implies $p^{'}(s) > 0$ somewhere,  a contradiction. Similarly, it is impossible to have $p^{'}(s) > 0$ everywhere. Hence $p^{'}(s) = 0$ for some $s$. Since  $p^{''} >0$, the root of $p^{'}(s)=0$ is unique. We may assume $p^{'}(0)=0$. Then
$p$ is an even function. So, $p^{'}(s)$ is a smooth odd function and $p^{''}(0)>0$.
We regard $(x_3,x_2)$ as polar coordinates  $(r, \theta)$ on $\mathbb{R}^2$ so that $dx_3^2 + (p^{'}(x_3))^2 d x_2^2$ can be a smooth metric near $x_3=0$ where $x_2 \in [0, \frac{2 \pi}{ p^{''}(0) }]$.
As $\rho \leq 0$ and $ \lambda<0$, $\eta$ can be some cosine hyperbolic function ($\rho<0$) or an exponential function  ($\rho=0$) from (\ref{eta3}), defined on $\mathbb{R}$.
The metric $g=dx_1^2+\eta^2(p')^2dx_2^2+\eta^2dx_3^2$ on $\{(x_1,x_2,x_3) \ | \ x_1 \in \mathbb{R}, \  0<x_3<\infty, \ x_2 \in [0, \frac{2 \pi}{ p^{''}(0) }] \}$ can be extended smoothly over the set $\{(x_1,x_2,x_3) \ | \ x_1 \in \mathbb{R}, \  0\leq x_3<\infty, \ x_2 \in [0, \frac{2 \pi}{ p^{''}(0) }] \}$ which is  $\mathbb{R}^3$. The potential function
$w=p(x_3) \eta(x_1)$ is also smooth positive on  $\mathbb{R}^3$.
\end{proof}

\begin{lemma} \label{comple22}
Suppose $a<0$, $\rho <0$, $p(0)>\rho_0$ and $\lambda < 0$.

 If $\mu=\kappa_0$, then
  a positive solution $p$ of {\rm (\ref{koeq})} is defined on $\mathbb{R}$ and  $p^{'}(s)$ dose not have a root. And  the space $(g,w)$ of  Lemma {\rm \ref{e1mu2not0}}   is defined on $\mathbb{R}^3$ and complete.

If $\mu<\kappa_0$,  then
  a positive solution $p$ of {\rm (\ref{koeq})} is defined on $\mathbb{R}$ and  $p^{'}(s)$ has a unique root. And  the space $(g,w)$ of  Lemma {\rm \ref{e1mu2not0}}   is defined on $\mathbb{R}^3$ and complete.

\end{lemma}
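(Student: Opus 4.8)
The plan is to analyze the first integral (\ref{koeq}) by a phase--line argument and then read off completeness from the warped product shape of (\ref{result1a}). Set $F(p):=\rho p^{2}+\frac{2a}{m-1}p^{1-m}$, so that (\ref{koeq}) reads $(p')^{2}=\frac{\mu}{m-1}-F(p)$ and (\ref{koeqdif}) reads $p''=ap^{-m}-\rho p$. With $a<0$ and $\rho<0$ one computes $F'(p)=2p^{-m}(\rho p^{m+1}-a)$, which is positive on $(0,\rho_0)$ and negative on $(\rho_0,\infty)$; hence $F$ climbs from $-\infty$ as $p\to 0^{+}$ to the maximum $F(\rho_0)=\frac{(m+1)\rho\rho_0^{2}}{m-1}=\frac{\kappa_0}{m-1}$ and then falls back to $-\infty$ as $p\to\infty$. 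Moreover $p=\rho_0$ is the unique equilibrium of (\ref{koeqdif}), with $F''(\rho_0)=2(m+1)\rho<0$, and the hypothesis $p(0)>\rho_0$ keeps the orbit in the outer region $p>\rho_0$ on which $F$ is strictly decreasing.

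First I would treat $\mu=\kappa_0$, where $(p')^{2}=F(\rho_0)-F(p)>0$ for every $p>\rho_0$. Thus $p'$ never vanishes, $p$ is strictly monotone, and by uniqueness of solutions it cannot meet the equilibrium $\rho_0$; so $p$ stays in $(\rho_0,\infty)$ and $p'$ has no root. Global existence then comes from the two ends: near $p=\infty$, $(p')^{2}\sim|\rho|p^{2}$ gives exponential growth, so infinite $x_3$--length is needed to reach $p=\infty$; near $\rho_0$, $F(\rho_0)-F(p)\sim(m+1)|\rho|(p-\rho_0)^{2}$ forces only an exponential approach to $\rho_0$ as $x_3\to\mp\infty$. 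Hence $p$ is defined on all of $\mathbb{R}$.

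Next, for $\mu<\kappa_0$, since $F$ decreases on $(\rho_0,\infty)$ from $F(\rho_0)>\frac{\mu}{m-1}$ to $-\infty$, there is a unique $p_{+}>\rho_0$ with $F(p_{+})=\frac{\mu}{m-1}$, and the requirement $(p')^{2}\ge 0$ together with $p(0)>\rho_0$ forces $p(0)\ge p_{+}$ and confines the orbit to $[p_{+},\infty)$. At $p_{+}$ one has $p'=0$ and, by (\ref{koeqdif}) and $a=\rho\rho_0^{m+1}$, $p''(p_{+})=\rho p_{+}^{-m}(\rho_0^{m+1}-p_{+}^{m+1})>0$, so $p_{+}$ is a strict minimum, a genuine turning point. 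Translating $x_3$ so this turning point sits at $x_3=0$, autonomy of (\ref{koeqdif}) with $p'(0)=0$ makes $p$ even, negative-sloped for $x_3<0$ and positive-sloped for $x_3>0$; thus $p'$ has a unique root. As before, $(p')^{2}\sim|\rho|p^{2}$ at $p=\infty$ yields global existence on both sides.

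Finally I would establish completeness from $g=dx_1^{2}+\eta^{2}\tilde g$ with $\tilde g=(p')^{2}dx_2^{2}+dx_3^{2}$. As $\lambda<0$ and $\rho<0$, (\ref{eta3}) gives $\eta^{2}\ge\frac{(m+2)|\rho|}{|\lambda|}=:A^{2}>0$ and $\eta$ is the $\cosh$-type solution of $\eta''=-\frac{\lambda}{m+2}\eta$, defined on all of $\mathbb{R}$. When $\mu<\kappa_0$, the root of $p'$ is capped exactly as in Lemma \ref{26}: since $p''(0)>0$, treating $(x_3,x_2)$ as polar coordinates with $x_2\in[0,2\pi/p''(0)]$ turns $\tilde g$ into a smooth complete metric on $\mathbb{R}^{2}$, and $(g,w)$ lives on $\mathbb{R}^{3}$. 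When $\mu=\kappa_0$, $p'$ never vanishes, so $\tilde g$ is already smooth on $\mathbb{R}^{2}=\{(x_2,x_3)\}$; I would prove its completeness directly by Hopf--Rinow, using that $(x_2,x_3)\mapsto x_3$ is $1$-Lipschitz for $\tilde g$ and that $|p'|\ge c_r>0$ on each strip $|x_3|\le r$ to confine $\tilde g$-balls inside coordinate boxes. Once $\tilde g$ is complete, $\eta^{2}\ge A^{2}$ gives $g\ge dx_1^{2}+A^{2}\tilde g$, and a metric dominating the complete product $dx_1^{2}+A^{2}\tilde g$ is itself complete; so $(g,w)$ is complete on $\mathbb{R}^{3}$. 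The hard part is this last step in the case $\mu=\kappa_0$: because the warping function $|p'|$ of $\tilde g$ degenerates to $0$ as $x_3\to-\infty$ (where $p\to\rho_0$), no ``warping function bounded below'' criterion applies to $\tilde g$, and its completeness must be argued by hand.
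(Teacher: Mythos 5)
Your overall strategy is essentially the paper's: a phase--line analysis of the first integral (\ref{koeq}), the dichotomy between the degenerate case $\mu=\kappa_0$ (orbit trapped in $(\rho_0,\infty)$, no root of $p'$, metric in standard coordinates on $\mathbb{R}^3$) and $\mu<\kappa_0$ (turning point, polar coordinates $(x_3,x_2)$ with angle range $[0,2\pi/p''(0)]$), and completeness of $g=dx_1^2+\eta^2\tilde g$ from the lower bound $\eta^2\geq (m+2)|\rho|/|\lambda|$. Your treatment of $\mu=\kappa_0$ is correct, and in fact your Hopf--Rinow argument for the completeness of $\tilde g=(p')^2dx_2^2+dx_3^2$ (where the warping factor $p'$ decays to $0$ at one end) supplies a detail the paper only asserts.

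In the case $\mu<\kappa_0$, however, there is a genuine gap: you never prove that the turning point $p_+$ is actually attained, i.e.\ that $p'$ has a root at all. Confinement of the orbit to $[p_+,\infty)$ does not give this by itself --- a priori the solution could decrease toward $p_+$ only asymptotically as $x_3\to\infty$, which is exactly what happens at the level $\rho_0$ in your own $\mu=\kappa_0$ case. Since ``$p'$ has a unique root'' is part of the statement, and your whole subsequent construction (evenness of $p$, polar coordinates) begins with ``translating $x_3$ so this turning point sits at $x_3=0$,'' existence of the root must be argued. The point that makes it work is that $p_+$ is a \emph{simple} zero of $G(p):=\frac{\mu}{m-1}-F(p)$ (since $F'(p_+)<0$ strictly, as $p_+>\rho_0$), so $\int dp/\sqrt{G(p)}$ converges at $p_+$ and the minimum is reached in finite time, whereas $\rho_0$ is a double zero when $\mu=\kappa_0$. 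The paper avoids this analysis altogether by first proving the uniform convexity bound $p''(s)\geq\tau_0>0$: the right-hand side $ap^{-m}-\rho p$ of (\ref{koeqdif}) is increasing in $p$ on $[\rho_0,\infty)$, vanishes at $\rho_0$, and $p\geq p_+>\rho_0$; hence $p'$ is strictly increasing with $p'\to\pm\infty$ and has exactly one root. That bound would also tighten your somewhat informal argument for uniqueness of the root. Either repair is routine, but without one of them the $\mu<\kappa_0$ case is incomplete.
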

\begin{proof}
Set $G(p)= \frac{\mu}{m-1} - \frac{2a }{m-1}p^{1-m} -\rho  p^{2}$, which  is increasing on $p > \rho_0 $,  decreasing on  $0<p <\rho_0 $ and  $G( \rho_0) = \frac{\mu}{m-1}- \frac{ k_0 }{m-1} \leq 0$.
We have  $p(0) > \rho_0$.  In case $\mu<\kappa_0$, $G( \rho_0) <0$. And $(p^{'})^2=G(p) \geq 0$ means that $p(s)> \rho_0$. In case $\mu=\kappa_0$, if $p(s_0)=\rho_0$ for some $s_0$  then $p'(s_0)=0$.
From (\ref{koeqdif}) we see that $p(s) \equiv \rho_0$, a contradiction. We again have  $p(s)> \rho_0>0$.

As in the second paragraph in the proof of Lemma \ref{26} we can see that $p$ is defined on  $\mathbb{R}$ and $p(s)>\rho_0$.

Suppose $\mu=\kappa_0$. If $p'(s_1)=0$ for some $s_1$, then
 $G(p(s_1))=0= G( \rho_0) $.
By the behaviour of $G$, $p(s_1)=\rho_0$, which is impossible. So, $p'(s)$ does not have a root.
 As $\rho < 0, \lambda<0$, $\eta$ can be some cosine hyperbolic function from (\ref{eta3}), defined on $\mathbb{R}$.
The metric $g=dx_1^2+\eta^2(p')^2dx_2^2+\eta^2dx_3^2$ and $w=p(x_3) \eta(x_1)$
 are complete on  $\mathbb{R}^3= \{ (x_1, x_2, x_3) \ |  \ x_i \in \mathbb{R}   \}$.

\medskip
Now suppose $\mu<\kappa_0$. Set $ B(p)= - \frac{2a }{m-1}p^{1-m} -\rho  p^{2}$.   As $(p'(s))^2=B(p(s))+\frac{\mu}{m-1}$, we have $B(p(s))\geq -\frac{\mu}{m-1}>-\frac{\kappa_0}{m-1}=B(\rho_0)$. $B(p)$ is increasing on $[\rho_0,\infty)$. Set $b= B|_{[\rho_0,\infty)}$. As $p(s) > \rho_0$,
$p(s)\geq b^{-1}(-\frac{\mu}{m-1})>\rho_0$. In the formula $p''=ap^{-m}-\rho p$, the right hand side is a positive increasing function of $p$ when $p\geq\rho_0$.
So, $p''(s)\geq a\{b^{-1}(-\frac{\mu}{m-1})\}^{-m}-\rho b^{-1}(-\frac{\mu}{m-1}):=\tau_0>a\rho_0^{-m}-\rho \rho_0=0$.

Now, following the argument in the proof of Lemma \ref{26}, one can show that $p^{'}(s)=0$ has a unique root, say $0$,  and that  regarding $(x_3,x_2)$ as polar coordinates $(r,\theta)$ on $\mathbb{R}^2$, the metric $g=dx_1^2+\eta^2(p')^2dx_2^2+\eta^2dx_3^2$ can be defined on $\{(x_1,x_2,x_3) \ | \ x_1 \in \mathbb{R},  \ 0\leq x_3<\infty, \ x_2 \in [0, \frac{2 \pi}{ p^{''}(0) }] \}$ which is  $\mathbb{R}^3$.
$w=p(x_3) \eta(x_1)$ is also smooth positive on  $\mathbb{R}^3$.

\end{proof}

\begin{prop} \label{pr47}
  Let $(M,g,w)$ be a complete $(\lambda,3+m)$-Einstein manifold such that there are exactly two distinct Ricci-eigen values on an open dense subset and that $\nabla w$ is not a Ricci-eigen vector field and $E_1 \mu_2\neq 0$ for
an adapted frame field $\{ E_i\}$.

Then in some coordinates $(x_1,x_2,x_3)$,  $(g,w)$ can be as follows:
\begin{align}
  g=dx_1^2+(\eta(x_1))^2(p'(x_3))^2dx_2^2+(\eta(x_1))^2dx_3^2 \label{result1}
\end{align}
where $p(x_3)$ is a non-constant positive solution on $\mathbb{R}$ of
\begin{align}
  (p')^2+\rho p^2+\frac{2a}{m-1}p^{1-m}=\frac{\mu}{m-1}, \label{koeq2}
\end{align}
\noindent and $\eta(x_1)$  is a non-constant positive solution on $\mathbb{R}$ of
 \begin{align} \label{koeq2v}(\eta^{'})^2+\frac{\lambda}{m+2}\eta^2=\rho ,
\end{align}
for constants $a\neq0$, $\rho $ and $\mu$ under one of the following conditions;

 {\rm (i-1)}  $\lambda<0$, $a>0$, $\rho \leq 0$,

 {\rm (i-2)} $\lambda<0$, $a<0$, $\rho <0$, $\mu\leq (m+1)\rho (\frac{a}{ \rho })^{2/(m+1)}$, $p(0)>(\frac{a}{ \rho })^{1/(m+1)}$.

\medskip
\noindent  And $w=p \cdot \eta$.

Conversely, any $(g, w)$ as in the above is complete and satisfies $\lambda_1\neq\lambda_2=\lambda_3$, $E_1\mu_2\neq 0$ and {\rm (\ref{hpwdef})}.

\end{prop}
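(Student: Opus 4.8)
The plan is to read this proposition as the global completion of the local normal form already established in Lemma~\ref{e1mu2not0}, with the bulk of the analytic work outsourced to the completeness lemmas \ref{420}, \ref{LL3k}, \ref{26} and \ref{comple22}; what remains is to bookkeep Kobayashi's case list under the stated dictionary and to globalize the chart. For the forward direction I would first apply Lemma~\ref{e1mu2not0} near any point of the open dense set where $\nabla w\neq 0$ and exactly two Ricci-eigenvalues occur, fixing the warped-product form $g=dx_1^2+\eta^2(p')^2dx_2^2+\eta^2dx_3^2$ with $w=p\,\eta$, where $p$ solves (\ref{koeq2}) with $a\neq 0$ and $\eta$ solves (\ref{koeq2v}). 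Completeness then imposes sign constraints: Lemma~\ref{420} rules out $\lambda\geq 0$ and the subcase $\rho>0,\ \lambda<0$, so necessarily $\lambda<0$ and $\rho\leq 0$. Feeding (\ref{koeq2}) into Kobayashi's classification via $n\leftrightarrow m+1$, $k\leftrightarrow\frac{\mu}{m-1}$, $R\leftrightarrow m(m+1)\rho$, I would discard his case (I) because there $p$ is forced constant, contradicting that $\nabla w$ is not Ricci-eigen, and discard cases (III.1)--(III.3) by Lemma~\ref{LL3k}. What survives is precisely the short list (\ref{tw11}), namely (IV.1) and (IV.3), which records both the parameter signs and the initial-value constraint $p(0)>\rho_0$.

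Next I would push the two surviving branches through the completeness lemmas. Branch (IV.1) together with $\lambda<0$ is exactly the hypothesis of Lemma~\ref{26}, giving condition (i-1) and a solution $p$ defined on all of $\mathbb{R}$ with $p'$ having a unique root, so that after the polar-coordinate completion in $(x_3,x_2)$ the pair $(g,w)$ extends smoothly and completely to $\mathbb{R}^3$. Branch (IV.3) with $\lambda<0$ is the hypothesis of Lemma~\ref{comple22}, whose subcases $\mu=\kappa_0$ and $\mu<\kappa_0$ jointly yield condition (i-2) and again a complete metric on $\mathbb{R}^3$. Since $\lambda<0$ and $\rho\leq 0$, the equation (\ref{koeq2v}) forces $\eta$ to be a hyperbolic-cosine ($\rho<0$) or exponential ($\rho=0$) profile, everywhere positive and defined on all of $\mathbb{R}$, which furnishes the complete $dx_1^2$-direction. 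The converse is immediate: the converse clause of Lemma~\ref{e1mu2not0} shows any $(g,w)$ of the stated form satisfies (\ref{hpwdef}), $\lambda_1\neq\lambda_2=\lambda_3$, $E_1\mu_2\neq 0$ and that $\nabla w$ is not Ricci-eigen, while Lemmas~\ref{26} and \ref{comple22} supply completeness.

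The step I expect to demand the most care is the \emph{globalization} from the single warped-product chart to all of $M$, since Lemma~\ref{e1mu2not0} is purely local. Here I would argue that the $\partial_1$-curves are unit-speed geodesics (because $g_{11}\equiv 1$ and the off-diagonal $g_{1k}$ vanish), so completeness of $M$ forces $x_1$ to range over all of $\mathbb{R}$, consistently with $\eta>0$ on $\mathbb{R}$; combined with the real-analyticity of $g$ and $w$ in harmonic coordinates and the density of $\{\nabla w\neq 0\}$, this upgrades the local isometry to a global identification of $(M,g)$ with the warped product on $\mathbb{R}^3$. The secondary point requiring attention is confirming that Kobayashi's list (I)--(VI) is genuinely exhaustive for our sign regime $\lambda<0,\ \rho\leq 0,\ a\neq 0$, so that no complete configuration is overlooked; this is a finite reading-off of his cases through the dictionary above rather than a new computation.
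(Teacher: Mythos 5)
Your proposal is correct and follows essentially the same route as the paper: the paper's Proposition~\ref{pr47} is exactly the assembly of Lemma~\ref{e1mu2not0} (local normal form and converse), Lemma~\ref{420} (forcing $\lambda<0$, $\rho\leq 0$), the Kobayashi dictionary with cases (I) and (III.1)--(III.3) discarded via his Proposition 2.2 and Lemma~\ref{LL3k}, and Lemmas~\ref{26} and \ref{comple22} for the surviving branches (IV.1) and (IV.3). Your closing paragraph on globalizing the chart via $\partial_1$-geodesics and analyticity is extra care the paper does not itself supply (it only asserts the form ``in some coordinates'' and later allows quotients in Theorem~\ref{maint2}), so it strengthens rather than diverges from the paper's argument.
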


\section{When $\nabla w$ is not a Ricci-eigen field and $E_1  \mu_2  = 0$.  }

 In this section we treat the case
when  $E_1\mu_2= 0$, as planned in the beginning of Section 4.  Then $ \mu_2$ is constant from (\ref{e1f5b}).
We can have more information than Lemma \ref{3c}.
 From (\ref{e1f0c}) and $E_1 \mu_2   = 0$, for $j=2,3$ we get $\langle \nabla_{E_j} E_j, E_1 \rangle =0$ and $H=0$ in (\ref{0ca2}). Since a leaf $L$ tangent to $E_{23}$ is umbilic, $L$ is totally geodesic.

 We shall show that the subcase of $\alpha= \langle \nabla_{E_1} E_3, E_1 \rangle=0$ cannot happen.

 \begin{lemma}  \label{LL61z}  Let $(M,g,w)$ be a $(\lambda,3+m)$-Einstein manifold with Ricci-eigen values $\lambda_1\neq\lambda_2=\lambda_3$.
     Suppose  that  $\nabla w$ is not a Ricci-eigen vector field and that $E_1\mu_2= 0$
for an adapted field $E_i$ in an open subset $\mathcal{U}$ of $\{\nabla w\neq 0\}$.
Then there is no solution with $\alpha=0$.
 \end{lemma}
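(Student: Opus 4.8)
The plan is to show that imposing $\alpha=0$ on top of the already-established $H\equiv 0$ over-determines the system: it collapses both $\lambda_1$ and $\lambda$ to zero and then produces two incompatible expressions for $E_1\lambda_2$, forcing $\lambda_2=\lambda_1$ and contradicting the two-eigenvalue hypothesis. First I would substitute $\alpha=0$ and $H\equiv 0$ into the Ricci formula (\ref{rc4}): since $R_{11}=-E_1H+2\alpha\beta-\tfrac{H^2}{2}$ and $H$ vanishes identically, this reduces to $\lambda_1=R_{11}=0$. Hence $\lambda_1$ is the constant function $0$, so in particular $E_3\lambda_1=0$.

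Next I would feed this into (\ref{e3lambda1}), $E_3\lambda_1=\{2\lambda-(m+2)\lambda_1\}\tfrac{E_3w}{w}$. With $\lambda_1\equiv 0$ and $E_3\lambda_1=0$ this reads $2\lambda\,\tfrac{E_3w}{w}=0$. Because the adapted frame arising from $\nabla w$ not being Ricci-eigen forces $E_3w\neq 0$ (and $w>0$), I conclude $\lambda=0$. This is the step where the non-Ricci-eigen hypothesis really does work for us, and it is worth recording that the parallel conclusions $E_1w\neq0$ and $E_3w\neq0$ from the proof of Lemma \ref{3c} are both needed later.

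Finally I would extract a contradiction from two independent handles on $E_1\lambda_2$. Since $E_1\mu_2=0$ means $\mu_2$ is constant, and with $\lambda=0$, $\lambda_1=0$ the scalar curvature is $R=2\lambda_2$, the eigenvalue of $\mathcal{C}$ becomes $\mu_2=-\tfrac{1}{m-1}w^{m+1}\lambda_2$; thus $w^{m+1}\lambda_2$ is constant, and differentiating along $E_1$ gives $E_1\lambda_2=-(m+1)\lambda_2\tfrac{E_1w}{w}$. On the other hand, the structural identity (\ref{e1lambda2}) with $\lambda=\lambda_1=H=0$ gives $E_1\lambda_2=-\lambda_2\tfrac{E_1w}{w}$. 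Equating the two yields $m\lambda_2\tfrac{E_1w}{w}=0$; since $m>1$, $w>0$ and $E_1w\neq0$, this forces $\lambda_2=0=\lambda_1$, contradicting $\lambda_1\neq\lambda_2$.

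The main obstacle is really just the bookkeeping that isolates these two genuinely different expressions for $E_1\lambda_2$: the factor $m+1$ comes from differentiating the conserved quantity $w^{m+1}\lambda_2$, while the factor $1$ comes from (\ref{e1lambda2}), and their mismatch is exactly what kills $\lambda_2$. I would take care to verify that $\lambda_1=0$ is genuinely forced, i.e.\ that with $\alpha=0$ and $H\equiv0$ no free constant survives in $R_{11}$. A more geometric alternative, which I would keep in reserve as a sanity check, is to note that $\alpha=H=0$ makes every $\nabla_{E_i}E_1$ and $\nabla_{E_1}E_j$ vanish, so $E_1$ is parallel and $(M,g)$ splits locally as $\mathbb{R}\times\Sigma^2$ with $\lambda_2=\lambda_3$ the ($x_1$-independent) Gauss curvature of $\Sigma$; then $w^{m+1}\lambda_2$ constant with $\lambda_2$ independent of $x_1$ would force $w$ independent of $x_1$, again contradicting $E_1w\neq0$.
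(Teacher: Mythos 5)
Your proof is correct, and it takes a genuinely different route from the paper's. The paper passes to the explicit normal form $g = dx_1^2 + q^2(x_3)dx_2^2 + dx_3^2$ (available since $\alpha = H = 0$), writes out the $(\lambda,3+m)$-Einstein equation as the PDE system $\partial_1\partial_1 w = -\tfrac{w}{m}\lambda$, $\ \tfrac{q'}{q}\partial_3 w = -\tfrac{w}{m}(\tfrac{q''}{q}+\lambda)$, $\ \partial_1\partial_3 w = 0$, and then separates variables, splitting into the cases $\lambda \neq 0$ and $\lambda = 0$ to show in each case that $w$ must depend on one variable only, contradicting the non-Ricci-eigen hypothesis. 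You instead stay entirely at the level of the frame and eigenvalue identities: $\lambda_1 = 0$ from (\ref{rc4}) with $\alpha = H = 0$, then $\lambda = 0$ from (\ref{e3lambda1}) and $E_3 w \neq 0$, and finally the mismatch between the two expressions for $E_1\lambda_2$ — the factor $-(m+1)$ coming from constancy of $\mu_2 = -\tfrac{1}{m-1}w^{m+1}\lambda_2$, versus the factor $-1$ from the structural identity (\ref{e1lambda2}) with $\lambda = \lambda_1 = H = 0$ — which forces $\lambda_2 = 0 = \lambda_1$. The two expressions are not circular: the first implicitly differentiates $R = 2\lambda_2$ directly, while (\ref{e1lambda2}) was derived using the conservation-type formula for $E_1 R$ from Lemma \ref{kklem}(ii), so equating them genuinely exploits the over-determination of the system, exactly as you announced. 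Your argument is coordinate-free, handles $\lambda \neq 0$ and $\lambda = 0$ uniformly (indeed it derives $\lambda = 0$ rather than case-splitting on it), and exposes the structural reason for the obstruction; the paper's argument is more computational but self-contained within the coordinate normal form and does not need the full strength of Lemma \ref{elambda}. Both contradictions ultimately bottom out at the same place, $E_1 w \neq 0$ and $E_3 w \neq 0$, i.e.\ the non-Ricci-eigen hypothesis. Your reserve splitting argument ($E_1$ parallel, so $g = dx_1^2 + g_\Sigma$ locally with $\lambda_2$ independent of $x_1$) is also sound once $\lambda = 0$ is in hand, since then $\mu_2$ constant gives $w^{m+1}\lambda_2$ constant with $\lambda_2 \neq 0$ on the set where the eigenvalues are distinct, forcing $E_1 w = 0$.
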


\begin{proof}
Assume $\alpha=0$.
We use $\langle \nabla_{E_j} E_j, E_1 \rangle =0$, $j=2,3,$ and (\ref{e1f545}) for the metric of  Lemma \ref{41d3} and can write
\begin{equation} \label{ggg221}
g= dx_1^2 +     q^2(x_3)dx_2^2+ dx_3^2,
 \end{equation}

Use Lemma \ref{3c} to evaluate the equation {\rm (\ref{hpwdef})} on $(E_i, E_i)$, $i=1,2$ and on $(E_1, E_3)$;
\begin{eqnarray}
 \partial_1 \partial_1 w =- \frac{w}{m}\lambda, \hspace{1.5cm} \label{bb1} \\
\frac{q^{'}}{q} \partial_3 w = -\frac{w}{m}(\frac{q^{''}}{q} +\lambda), \label{bb2}\\
\partial_1 \partial_3 w = 0 \hspace{2cm} \label{bb3}.
 \end{eqnarray}
 (\ref{bb3}) gives $w = w_1(x_1) +  w_3(x_3)$. (\ref{bb1}) then gives $\partial_1 \partial_1 w_1 =-\frac{ w_1(x_1) +  w_3(x_3)}{m}\lambda$. If $\lambda \neq 0$, then $w_3(x_3)$ is a constant. Then $\nabla w$ is Ricci-eigen vector field, a contradiction to the hypothesis.


If $\lambda = 0$, then $\partial_1 \partial_1 w_1 =0$. So, $w_1 = ax_1 + b$ for constants $a, b$. Note that $q^{''} \neq 0$ because $g$ will have zero curvature if $q^{''} = 0$.
  (\ref{bb2}) gives $m \frac{q^{'}\partial_3 w_3}{q^{''}}+w_1(x_1)+ w_3 =0$, where only $w_1$ depends on $x_1$, so $w_1 =b$. Then $\nabla w$ is Ricci-eigen vector field,  a contradiction to the hypothesis.
\end{proof}

 \begin{lemma}  \label{LL61}
 Let $(M,g,w)$ be a $(\lambda,3+m)$-Einstein manifold with Ricci-eigen values $\lambda_1\neq\lambda_2=\lambda_3$.     Suppose  that  $\nabla w$ is not a Ricci-eigen vector field and $E_1\mu_2= 0$ for an adapted field $E_i$ in an open subset $\mathcal{U}$ of $\{\nabla w\neq 0\}$.

Then
there exist local coordinates $(x_1,x_2,x_3)$ near any point in  $\mathcal{U}$  where $(g,w)$ can be follows:
 \begin{align*}
  g= (p(x_3))^2 dx_1^2 +     (p^{'})^2 dx_2^2+ dx_3^2,
\end{align*}
for a non-constant positive function $p(x_3)$ satisfying
\begin{equation} \label{mu2const1}
(p')^2+\frac{\lambda}{m+2}p^2+\frac{2a}{m}p^{-m}=k.
\end{equation}
for  constants $a \neq 0, k$.
And  $w = {\tau}(x_1) p(x_3)$, where ${\tau}(x_1)$ is  a non-constant  positive function satisfying
$  ({\tau}')^2=-k \cdot {\tau}^2+\frac{\mu}{m-1}$. Conversely, any $(g, w)$ in the above form  satisfies that
 $\nabla w$ is not a Ricci-eigen vector field,
$\lambda_1\neq\lambda_2=\lambda_3$, $E_1\mu_2= 0$ and {\rm {\rm (\ref{hpwdef})}}.

 \end{lemma}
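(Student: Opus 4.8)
The plan is to mirror the derivation of Lemma \ref{e1mu2not0}, exploiting the strong simplifications forced by $E_1\mu_2=0$. Since $E_1\mu_2=0$ makes $H=0$ in (\ref{0ca2}) and Lemma \ref{LL61z} excludes $\alpha=0$, we are in the setting of Lemma \ref{41d}: the metric is (\ref{32av}) subject to (\ref{32ajh}), and with $H=0$ the connection relations (\ref{0ca2}) collapse to $\nabla_{E_1}E_1=-\alpha E_3$, $\nabla_{E_1}E_3=\alpha E_1$, $\nabla_{E_2}E_2=\beta E_3$, $\nabla_{E_2}E_3=-\beta E_2$ and, crucially, $\nabla_{E_3}E_i=0$ for every $i$. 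From $\nabla_{E_3}E_3=0$ and (\ref{e1f545}) I get $\partial_1 g_{33}=0$, so $g_{33}=g_{33}(x_3)$; replacing $x_3$ I may take $g_{33}\equiv 1$, i.e. $E_3=\partial_3$. Then $\alpha=\frac{1}{2}\partial_3\ln g_{11}$, $\beta=-\frac{1}{2}\partial_3\ln v$, and (\ref{32ajh}) reads $\partial_3 g_{11}=c_3(x_1)\sqrt{g_{11}\,v}$ with $c_3\neq 0$.

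Next I would integrate the Hessian equation (\ref{hpwdef}) in this frame. Evaluating it on $(E_2,E_2)$ and on $(E_3,E_3)$ and subtracting (recall $R_{22}=R_{33}$) gives $\partial_3^2 w+\beta\,\partial_3 w=0$, so $\partial_3 w=C(x_1)\sqrt{v}$; evaluating on $(E_1,E_3)$ gives $\partial_3\partial_1 w=\alpha\,\partial_1 w$, so $E_1 w=\partial_1 w/\sqrt{g_{11}}=A(x_1)$ depends on $x_1$ alone. Writing $V(x_3):=\int\sqrt{v}\,dx_3$, the relations $\partial_3 w=C\sqrt{v}$ and $\partial_3\sqrt{g_{11}}=\frac{c_3}{2}\sqrt{v}$ integrate to
\[
 w=C(x_1)\,V(x_3)+D(x_1),\qquad \sqrt{g_{11}}=\frac{c_3(x_1)}{2}\,V(x_3)+B(x_1),
\]
so both $w$ and $\sqrt{g_{11}}$ are affine in the single $x_3$-profile $V$.

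The technical heart — and the step I expect to be the main obstacle — is to show that these affine data assemble into a genuine product. Substituting the two expressions above into the one remaining Hessian equation $\nabla dw(E_1,E_1)=\frac{w}{m}(R_{11}-\lambda)$, with $R_{11}=2\alpha\beta$ from (\ref{rc4}), produces an identity of the form $\sum_i f_i(x_1)\,\phi_i(x_3)=0$, where the $\phi_i$ range over $1,V,V^2,(V')^2,V'',VV''$. As $V$ does not depend on $x_1$ while the coefficients $f_i$ do, the identity can hold only if these six functions of $x_3$ are linearly dependent; the dependence relation is exactly a second order ODE for $V$ (equivalently for $p$), while matching the $x_1$-coefficients forces $c_3$ to be constant and the ratios $B/c_3$ and $D/C$ to be constant. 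After a reparametrization of $x_1$ and a shift of the base point in $V$ I then normalize $c_3=2$, $B=0$, $D=0$, whence $\sqrt{g_{11}}=V=:p(x_3)$, $v=(p')^2$ and $w=C(x_1)\,p(x_3)=:\tau(x_1)\,p(x_3)$, giving $g=p^2\,dx_1^2+(p')^2\,dx_2^2+dx_3^2$.

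With the product form established the ordinary differential equations fall out cleanly. Re-evaluating $\nabla dw(E_1,E_1)=\frac{w}{m}(\lambda_1-\lambda)$ separates as $\frac{\tau''}{\tau}=\frac{-2pp''-\lambda p^2}{m}-(p')^2=-k$ for a constant $k$; the $x_1$-part is $\tau''+k\tau=0$, hence $(\tau')^2+k\tau^2$ is constant, the constant being identified with $\frac{\mu}{m-1}$ through Lemma \ref{kklem}(i). The $x_3$-part is $m(p')^2+2pp''+\lambda p^2=mk$; setting $u=(p')^2$ as a function of $p$ turns this into the linear equation $p\,\frac{du}{dp}+mu=mk-\lambda p^2$, and the integrating factor $p^m$ yields $(p')^2+\frac{\lambda}{m+2}p^2+\frac{2a}{m}p^{-m}=k$ with integration constant $a$. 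That $a\neq 0$ follows because $a=0$ would force $\lambda_1=\lambda_2=\frac{2\lambda}{m+2}$ (from $\lambda_1=-2p''/p$ and $\lambda_2=-p''/p-p'''/p'$), contradicting $\lambda_1\neq\lambda_2$; and $p,\tau$ must be non-constant to keep $g$ nondegenerate and $\nabla w=\tau'E_1+\tau p'E_3$ away from a Ricci-eigen direction. The converse is a direct substitution verifying (\ref{hpwdef}), the eigenvalue pattern $\lambda_1\neq\lambda_2=\lambda_3$, $E_1\mu_2=0$, and that $\nabla w$ is not Ricci-eigen.
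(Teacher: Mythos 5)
Your overall architecture does match the paper's: you use $E_1\mu_2=0$ to get $H=0$, invoke Lemma \ref{LL61z} to exclude $\alpha=0$, normalize $g_{33}\equiv 1$ via (\ref{e1f545}), and integrate the Hessian equation (\ref{hpwdef}) on $(E_2,E_2)$, $(E_3,E_3)$ and $(E_1,E_3)$ to conclude that both $w=CV+D$ and $\sqrt{g_{11}}=\frac{c_3}{2}V+B$ are affine in $V=\int\sqrt{v}\,dx_3$; and your closing ODE work (separation of variables in the $(E_1,E_1)$ equation, the integrating factor $p^m$ yielding (\ref{mu2const1}), the argument that $a\neq 0$, and non-constancy of $p$ and $\tau$) is correct and agrees with the paper's passage from (\ref{et07u}) to (\ref{mu2const1}). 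The problem is the step you yourself call the technical heart: there you only assert the outcome, and the assertion is both unproven and partially false.

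Concretely, you claim that matching $x_1$-coefficients in the $(E_1,E_1)$ equation forces $c_3$ to be constant and $B/c_3$, $D/C$ to be constant, and then you normalize $B=D=0$ by a single shift of $V$ plus a reparametrization. Two objections. First, $c_3$ cannot be forced to be constant: starting from any genuine solution $g=p^2dy_1^2+(p')^2dx_2^2+dx_3^2$, $w=\tau(y_1)p$, the substitution $y_1=\phi(x_1)$ gives $\sqrt{g_{11}}=\phi'(x_1)\,p(x_3)$, i.e.\ $c_3=2\phi'$ arbitrary and non-constant (with $B=D=0$), and this is still a solution satisfying all hypotheses; so no coefficient identity can yield $c_3'=0$, and only reparametrization-invariant conclusions can be extracted. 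Second, and more seriously, constancy of $B/c_3$ and of $D/C$ separately does not suffice for your normalization: the shift $V\mapsto V+s$ kills $B$ only when $s=2B/c_3$ and kills $D$ only when $s=D/C$, so you need the \emph{equality} $2B/c_3=D/C$. This equality is exactly the paper's statement $c_{10}=c_{12}$, and it is where the actual work of the lemma lies: the paper differentiates its equation (\ref{et01}) in $x_1$ and then twice in $x_3$ to show through (\ref{c10c12}) (with a contradiction argument) that $c_{10}$ and $c_{12}$ are constants, and then feeds $w=\tau(p+c_{12})$ into the $(E_1,E_3)$ equation (\ref{m04}) to obtain $(\partial_1\tau)(c_{10}-c_{12})=0$, hence $c_{10}=c_{12}$ since $\partial_1\tau\neq 0$. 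In your sketch the $(E_1,E_3)$ information ($E_1w=A(x_1)$, equivalently $D'/C'=2B/c_3$) is derived and then never used, and the bilinear identity is never analyzed; note also that after clearing denominators the $x_3$-functions occurring there include $V^3$, $V^2V''$ and $V(V')^2$ in addition to your list, and they cannot be linearly independent (linear independence would force $\lambda=0$ via the $V^3$-coefficient), so the ``matching'' necessarily runs through their dependence relations and is not a routine argument. Until this middle step is actually carried out, the reduction to the product form $g=p^2dx_1^2+(p')^2dx_2^2+dx_3^2$, $w=\tau p$ remains unestablished.
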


\begin{proof}By Lemma \ref{LL61z}, we may assume $\alpha \neq 0$. For the metric in Lemma \ref{41d}, $H=\frac{\partial_1g_{33}}{g_{33}\sqrt{g_{11}}}=0$, so $\partial_1g_{33}=0$. Replacing $g_{33}(x_3)dx_3^2$ by $dx_3^2$, the metric can be written as
\begin{equation} \label{32a}
g= g_{11}(x_1, x_3) dx_1^2 +     g_{22}( x_3)dx_2^2+ dx_3^2,
 \end{equation}
with $E_i =\frac{1}{\sqrt{g_{ii}}} \frac{\partial }{\partial x_i} $ for $i=1,2,3$.

 (\ref{ef2}) gives $ \partial_3 \alpha  +\alpha \beta +  \alpha^2=0$. As  $\alpha=\langle \nabla_{E_1}E_3,E_1\rangle = \partial_3  \ln (\sqrt{g_{11}})\neq 0$ and $\beta= -  \partial_3 \ln (\sqrt{g_{22}})$,
$  \frac{\partial_3 \partial_3  \ln (\sqrt{g_{11}})}{\partial_3  \ln (\sqrt{g_{11}})}    -  \partial_3 \ln (\sqrt{g_{22}}) +  \partial_3  \ln (\sqrt{g_{11}})=0$.
 Integrating, $ \partial_3 \sqrt{g_{11}} = c_9(x_1) \sqrt{g_{22}} $ and  $ \sqrt{g_{11}} = c_9(x_1) (\int \sqrt{g_{22}}(x_3) dx_3  + c_{10}(x_1))$ for functions $c_9$ and $c_{10}$.

Replacing $c_9^2(x_1)dx_1^2$ by $dx_1^2$ and  denoting $\int  \sqrt{g_{22}} dx_3$ as $p(x_3)$,
 \begin{equation} \label{ggg3b}
g= \{p(x_3)+ c_{10}(x_1)\}^2dx_1^2 +     {p^{'}(x_3)}^2 dx_2^2+ dx_3^2.
\end{equation}
The curvature components are computed from (\ref{rc4});
\begin{align*}
 & R_{11}=R(E_1, E_1)=-2\frac{p^{''}}{p+c_{10}},\quad R_{22}=R_{33}=-\frac{p^{'''}}{p^{'}}-\frac{p^{''}}{p+c_{10}}\\
 & R=-2\frac{p^{'''}}{p^{'}}-\frac{4p^{''}}{p+c_{10}}
\end{align*}

Use $\alpha=\frac{ p^{'}}{ p+ c_{10}}$, $\beta= - \frac{ p^{''}}{ p^{'}}$ and (\ref{0ca2}) to evaluate the equation {\rm (\ref{hpwdef})} on $(E_i, E_i)$, $i=1,2,3$ and on $(E_1, E_3)$, we obtain
\begin{align}
&\frac{\partial_1}{p+ c_{10}} \left(   \frac{\partial_1 w}{p+ c_{10}}\right)  + \frac{ p^{'}}{ p+ c_{10}} (\partial_3 w)   =   \frac{w}{m} \left(- 2\frac{ p^{''}}{ p+ c_{10}} -\lambda\right)  , \label{m01}\\
&\frac{ p^{''}}{ p^{'}} (\partial_3 w) = \frac{w}{m} \left(-\frac{ p^{'''}}{ p^{'}}- \frac{ p^{''}}{ p+ c_{10}} -\lambda\right)\hspace{0.7cm} \label{m02} \\
&\partial_3 \partial_3 w  =\frac{w}{m} \left(-\frac{ p^{'''}}{ p^{'}}- \frac{ p^{''}}{ p+ c_{10}}-\lambda\right) . \hspace{0.4cm}
\label{m03} \\
& \frac{1 }{p+c_{10}}\partial_1 \partial_3 w - \frac{p^{'}}{(p+c_{10})^2}(\partial_1  w) =0. \hspace{1.1cm}  \label{m04}
\end{align}

(\ref{m02}) and  (\ref{m03}) gives $\partial_3 \partial_3 w  = \frac{ p^{''}}{ p^{'}} \partial_3 w $ so that  $\partial_3 w = {\tau}(x_1) p^{'}(x_3)$ and  $w = {\tau}(x_1)\{ p(x_3) + c_{12}(x_1)\}$ for a function $\tau \neq 0$.
Then (\ref{m03})  yields
\begin{equation} \label{et01}
\frac{mp''}{p+c_{12}}=-\frac{p'''}{p'}-\frac{p''}{p+c_{10}}-\lambda.
 \end{equation}
 Take $\partial_1$ to get $-\frac{m p''(\partial_1c_{12})}{(p+c_{12})^2}=\frac{p''(\partial_1c_{10})}{(p+c_{10})^2}$.
Note that $p^{''} \neq 0$; otherwise $R_{11}=R_{22}=R_{33}=0$. Then
\begin{align}
  -m(\partial_1 c_{12} )(p^2 + 2p c_{10} + c_{10}^2) =  (\partial_1 c_{10})( p^2 + 2p c_{12} + c_{12}^2 ). \label{c10c12}
\end{align}
Take $\partial_3$, divide by $p^{'}$ and take  $\partial_3$ again to get $-m(\partial_1 c_{12} ) =  (\partial_1 c_{10})$. If $\partial_1 c_{12} \neq 0$, then $c_{12}=c_{10}$ by (\ref{c10c12}).
This will give a contradiction.
So, $c_{12}$ and $c_{10}$ are both constants.

Put $w = {\tau}(x_1)\{ p(x_3) + c_{12}\}$ into (\ref{m04}),  we get $ (\partial_1{\tau})( c_{10} -c_{12})=0 $.
 Here $  \partial_1 {\tau} \neq 0$; otherwise, $\partial_1w=0$,  a contradiction. So, $c_{10}=c_{12}$.
 Then we can replace the function $p(x_3)+c_{10}$ by another function, still denoted by $p(x_3)$, so that the metric becomes;
 \begin{equation} \label{ggg3b0}
g= \{p(x_3)\}^2dx_1^2 +     \{p^{'}(x_3)\}^2 dx_2^2+ dx_3^2,
\end{equation}
 with $w = {\tau}(x_1) p(x_3)$.
And we may put $c_{10}= c_{12} =0$ in
(\ref{m01})$\sim$(\ref{m04}).

From (\ref{m02}) we get $\frac{ p^{'''}}{ p^{'}}+  (m+1) \frac{p^{''}}{p}  +\lambda =0.$
Integrating this,
\begin{equation} \label{et07u}
 m(p^{'})^2  +    2  p p^{''}+\lambda p^2=km
\end{equation}
for a constant $k$. Multiply by $p^{m-1} p^{'}$ to get
$ mp^{m-1}(p')^3+2p^mp'p''+\lambda p^{m+1}p'=kmp^{m-1}p'.$ Integrate to get
$p^m(p')^2+\frac{\lambda}{m+2}p^{m+2}=kp^m-\frac{2a}{m}$ for a constant $a$. Thus we obtain (\ref{mu2const1}).
Differentiate (\ref{mu2const1}) to get
\begin{equation} \label{et07w}
 p^{''}  +   \frac{\lambda}{m+2} p   =ap^{-m-1}.
\end{equation}
And from (\ref{m01}), we get $m\frac{\partial_1\partial_1{\tau}}{{\tau}}=-2pp^{''}-\lambda p^2-m(p')^2$. By (\ref{et07u}), $\partial_1\partial_1{\tau}=-k{\tau}$. Integrating
\begin{align}
  ({\tau}')^2=-k({\tau})^2+\frac{\mu}{m-1}
\end{align}
for a constant $\mu$. Note that ${\tau}$ is non-constant; otherwise $\nabla w$ becomes Ricci-eigen.
It is easy to see that this $\mu$ is the same as the constant $\mu$ of Lemma \ref{kklem}.

 If $a=0$, then by (\ref{et07w}) $g$ is an Einstein metric with $\lambda_1=\lambda_2=\frac{2\lambda}{m+2}$. So $a$ is not zero.
\end{proof}

Note that (\ref{mu2const1}) and (\ref{et07w}) are also identical to the Kobayashi's equations (2.2) and (2.1) in \cite{Ko}, if we match  $n \leftrightarrow m+2$, $ \ \frac{R}{n-1} \leftrightarrow  \lambda$.

 \medskip
As in Section 4, we look for conditions which can produce complete  $(\lambda,3+m)$-Einstein manifolds under the hypothesis of Lemma \ref{LL61}. Among Kobayashi's list, the conditions I.1 and I.2 gives only constant solutions for $p$.
The metric $g$ of  Lemma \ref{LL61} can be shown to be incomplete under his (III.1)$\sim$(III.3) conditions, as in Lemma \ref{LL3k}.

Since $a \neq 0$, we have the following list of three possible conditions:

\begin{align*}
  \textrm{ IV.1.}&\quad \textrm{ $a>0$, $\lambda \leq0$,}\\
  \textrm{ IV.3.} &\quad \textrm{ $a<0$, $\lambda <0$, $k\leq \kappa_0$, $p(0)>\rho_0$,}\\
  \textrm{ V.} & \quad \textrm{ $a>0$, $\lambda >0$, $k>\kappa_0$, }
\end{align*}
where  $a\lambda>0$, $\rho_0=\left\{\frac{a(m+2)}{ \lambda}\right\}^{1/(m+2)}$ and $\kappa_0=\frac{\lambda}{m}\left\{\frac{a(m+2)}{ \lambda}\right\}^{2/(m+2)}$.

The above three conditions are treated in Lemmas below.
The proofs of Lemma \ref{5p6} and \ref{5p7} are similar to those of Lemma \ref{26} and \ref{comple22}.

\begin{lemma}  \label{5p6}
If $a >0, \lambda\leq 0$, i.e. if {\rm (IV.1)} is satisfied,
  then
  a positive solution $p$ of (\ref{mu2const1}) is defined on $\mathbb{R}$ and  $p^{'}(s)$ has a unique root. And  the space $(g,w)$ of  Lemma  \ref{LL61}   is defined on $\mathbb{R}^3$ and complete.
\end{lemma}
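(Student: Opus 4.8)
The plan is to mirror the proof of Lemma~\ref{26}, the one genuinely new feature being that here the coefficient $p^2$ of $dx_1^2$ varies with the fibre variable $x_3$, so the behaviour of the warped factor at the cone point will need separate checking. First I would extract a uniform positive lower bound for $p$. Writing $F(p):=\frac{\lambda}{m+2}p^2+\frac{2a}{m}p^{-m}$, the hypotheses $a>0$ and $\lambda\le 0$ make $F$ strictly decreasing on $(0,\infty)$ with $\lim_{p\to 0^+}F(p)=+\infty$; since (\ref{mu2const1}) reads $(p')^2=k-F(p)\ge 0$, there is a constant $p_1>0$ with $p\ge p_1$ throughout.

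Next I would prove that $p$ is defined on all of $\mathbb{R}$ by a comparison argument. Using the second order form (\ref{et07w}), namely $p''=ap^{-m-1}-\frac{\lambda}{m+2}p$, I would compare $p$ with the solution $h$ of the linear equation $h''+\frac{\lambda}{m+2}h=ap_1^{-m-1}$ with $h(0)=p(0)$ and $h'(0)=p'(0)$. Since $p\ge p_1$ gives $(p-h)''+\frac{\lambda}{m+2}(p-h)=a(p^{-m-1}-p_1^{-m-1})\le 0$, factoring the operator $\frac{d^2}{ds^2}+\frac{\lambda}{m+2}$ and integrating (exactly as in Lemma~\ref{26}) yields $p\le h$; as $h$ is a combination of exponentials when $\lambda<0$ and a quadratic polynomial when $\lambda=0$, it is finite on $\mathbb{R}$, so $p$ is trapped in $[p_1,h]$ on every bounded interval and hence extends to a global solution.

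Then I would locate the critical points of $p$. Because $a>0$, $\lambda\le 0$ and $p>0$, equation (\ref{et07w}) gives $p''\ge ap^{-m-1}>0$, so $p$ is strictly convex. The argument of \cite[Lemma~3.4]{MT} reproduced in Lemma~\ref{26} rules out both $p'<0$ everywhere and $p'>0$ everywhere, since on a suitable half-line the uniform bound $p''\ge C>0$ forces $p'$ to change sign; thus $p'$ vanishes, and strict convexity makes the root unique. Normalizing it to $s=0$, the autonomy of (\ref{et07w}) makes $p$ an even function of $x_3$ with $p''(0)>0$.

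Finally I would assemble the space. Regarding $(x_3,x_2)$ as polar coordinates $(r,\theta)$, the degenerating piece $dx_3^2+(p')^2dx_2^2$ closes up smoothly at $x_3=0$ when $x_2$ is given the period $\frac{2\pi}{p''(0)}$, just as in Lemma~\ref{26}. The new point is that $p(x_3)^2dx_1^2$ must also extend smoothly over this origin; this holds because $p$ is smooth and even in $x_3$, so $p^2$ is a smooth radial function on the disk. The result is a smooth warped product $g=p^2dx_1^2+(p')^2dx_2^2+dx_3^2$ on $\mathbb{R}^3$, with smooth positive potential $w=\tau(x_1)p(x_3)$. I expect the main obstacle to be precisely this completeness and smoothness check at the cone point: unlike in Lemma~\ref{26} the $dx_1^2$ coefficient is $p^2$ rather than a constant, so one must confirm both that $p^2$ extends smoothly across the origin of the $(x_2,x_3)$--disk and that the warped metric over the base $\mathbb{R}^2$ with fibre $\mathbb{R}$ remains complete, for which the bound $p\ge p_1>0$ is the decisive ingredient.
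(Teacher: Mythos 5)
Your proposal is correct and follows essentially the same route as the paper: the paper's own ``proof'' of this lemma is just the remark that it is similar to Lemma~\ref{26}, and your argument is precisely that adaptation (lower bound $p\ge p_1$ from the monotone potential $F$, comparison with the linear ODE $h''+\tfrac{\lambda}{m+2}h=ap_1^{-m-1}$ to get global existence, the convexity argument of \cite[Lemma~3.4]{MT} for the unique root of $p'$, and the polar-coordinate closing-up with period $2\pi/p''(0)$). The one point you add beyond what the paper records --- that the warping coefficient $p^2$ of $dx_1^2$, being a smooth even function of $x_3$, extends smoothly across the origin of the $(x_3,x_2)$-disk, with $p\ge p_1>0$ guaranteeing completeness of the warped direction --- is exactly the detail the paper leaves implicit, and it is handled correctly.
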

\begin{lemma} \label{5p7}
 Suppose $a <0, \lambda< 0, p(0)> \rho_0 $.

If $k =\kappa_0$,  then
  a positive solution $p$ of {\rm (\ref{mu2const1})} is defined on $\mathbb{R}$ and  $p^{'}(s)$ does not have a root. And  the space $(g,w)$ of  Lemma  {\rm \ref{LL61}}   is defined on $\mathbb{R}^3$ and complete.

If $k < \kappa_0$,    then
  a positive solution $p$ of {\rm (\ref{mu2const1})} is defined on $\mathbb{R}$ and  $p^{'}(s)$ has a unique root. And  the space $(g,w)$ of  Lemma  {\rm \ref{LL61} }  is defined on $\mathbb{R}^3$ and complete.
\end{lemma}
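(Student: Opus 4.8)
The plan is to follow the proof of Lemma \ref{comple22} almost verbatim, under the dictionary that replaces $\rho$ by $\frac{\lambda}{m+2}$, the term $\frac{2a}{m-1}p^{1-m}$ by $\frac{2a}{m}p^{-m}$, and $\frac{\mu}{m-1}$ by $k$, using the ODEs (\ref{mu2const1}) and (\ref{et07w}) of Lemma \ref{LL61} in place of (\ref{koeq}) and (\ref{koeqdif}). Here $\rho_0=\{a(m+2)/\lambda\}^{1/(m+2)}$ and $\kappa_0=\frac{\lambda}{m}\{a(m+2)/\lambda\}^{2/(m+2)}$, which are well defined with $\rho_0>0$ and $\kappa_0<0$ since $a<0,\lambda<0$ force $a(m+2)/\lambda>0$. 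First I would set $G(p)=k-\frac{2a}{m}p^{-m}-\frac{\lambda}{m+2}p^2$, so that $(p')^2=G(p)$ by (\ref{mu2const1}). The computation $G'(p)=2p^{-m-1}\left(a-\frac{\lambda}{m+2}p^{m+2}\right)$ shows $G'$ vanishes only at $\rho_0$, so $G$ decreases on $(0,\rho_0)$, increases on $(\rho_0,\infty)$, and its minimum is $G(\rho_0)=k-\kappa_0$. Moreover $\rho_0^{m+2}=a(m+2)/\lambda$ gives $a\rho_0^{-m-1}=\frac{\lambda}{m+2}\rho_0$, so by (\ref{et07w}) the constant $p\equiv\rho_0$ is an equilibrium solution.

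Next I would prove $p(s)>\rho_0$. If $k=\kappa_0$ then $G\geq 0$ with equality only at $\rho_0$; should $p(s_0)=\rho_0$ for some $s_0$, then $p'(s_0)=0$ and uniqueness for (\ref{et07w}) at the equilibrium forces $p\equiv\rho_0$, contradicting $p(0)>\rho_0$. If $k<\kappa_0$, I put $B(p)=-\frac{2a}{m}p^{-m}-\frac{\lambda}{m+2}p^2$, so $(p')^2=B(p)+k$; then $B(p)\geq -k>-\kappa_0=B(\rho_0)$ and, since $B$ increases on $[\rho_0,\infty)$ with local inverse $b^{-1}$, one gets $p(s)\geq b^{-1}(-k)>\rho_0$, exactly as in Lemma \ref{comple22}. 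Global existence on $\mathbb{R}$ then follows from the comparison argument in the second paragraph of Lemma \ref{26}: since $p$ is bounded below by $\rho_0>0$ the term $ap^{-m-1}$ is bounded, and comparison with a solution $h$ of the linear equation $h''+\frac{\lambda}{m+2}h=\mathrm{const}$, whose solutions are combinations of $e^{\pm\sqrt{-\lambda/(m+2)}\,s}$ defined on all of $\mathbb{R}$ (as $\lambda<0$), rules out finite-time blow-up.

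For the roots of $p'$ I would argue as follows. When $k=\kappa_0$, a root $p'(s_1)=0$ would give $G(p(s_1))=0$ with $p(s_1)>\rho_0$, contradicting $G>0$ on $(\rho_0,\infty)$; hence $p'$ has no root and $p$ is strictly monotone. When $k<\kappa_0$, the right-hand side $ap^{-m-1}-\frac{\lambda}{m+2}p$ of (\ref{et07w}) is positive and increasing for $p\geq\rho_0$, so along the solution $p''(s)\geq \tau_0:=a\{b^{-1}(-k)\}^{-m-1}-\frac{\lambda}{m+2}b^{-1}(-k)>a\rho_0^{-m-1}-\frac{\lambda}{m+2}\rho_0=0$; the uniqueness argument of Lemma \ref{26} (if $p'$ were everywhere of one sign, $p''\geq\tau_0>0$ would force a sign change) then shows $p'$ has a unique root, which I place at $s=0$, where $p'(0)=0$, $p''(0)\geq\tau_0>0$, and $p$ is even about $0$.

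Finally I would verify completeness of $g=p^2\,dx_1^2+(p')^2\,dx_2^2+dx_3^2$ on $\mathbb{R}^3$. For $k=\kappa_0$, both $p$ and $p'$ are nowhere zero, so $g$ is a doubly warped product over the complete base $(\mathbb{R},dx_3^2)$ with positive warping functions $p,|p'|$ and complete fibers $\mathbb{R}_{x_1},\mathbb{R}_{x_2}$; any finite-length curve escaping to infinity projects to a relatively compact $x_3$-interval on which $p$ and $|p'|$ have positive minima, forcing its fiber components to have finite length too, a contradiction, so $g$ is complete on $\mathbb{R}^3$ and $w=\tau(x_1)p(x_3)$ is smooth and positive. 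For $k<\kappa_0$, I regard $(x_3,x_2)$ as polar coordinates $(r,\theta)$ with $x_2\in[0,2\pi/p''(0)]$; because $p'(0)=0$, $p''(0)>0$ and $p'$ is a smooth odd function of $r$, the metric $dx_3^2+(p')^2\,dx_2^2$ extends smoothly across $r=0$ to a rotationally symmetric metric on $\mathbb{R}^2$, and together with $p^2\,dx_1^2$ gives a smooth metric on $\mathbb{R}^3$; completeness follows from the same warped-product argument, now over the complete base $\mathbb{R}^2_{(r,\theta)}$ with fiber $\mathbb{R}_{x_1}$ warped by $p\geq\rho_0>0$. The ODE estimates above are routine once the dictionary with Lemma \ref{comple22} is fixed; I expect the main obstacle to be the completeness step, specifically checking the $C^\infty$ extension of the metric across the axis $r=0$ in the case $k<\kappa_0$ (which relies on the period $2\pi/p''(0)$ and the evenness of $p$) and making the non-escaping warped-product argument precise.
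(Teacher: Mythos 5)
Your proposal is correct and follows exactly the route the paper intends: the paper gives no separate argument for this lemma, stating only that its proof is similar to those of Lemmas \ref{26} and \ref{comple22}, and your write-up is precisely that adaptation, carried out with the correct dictionary ($\rho \leftrightarrow \frac{\lambda}{m+2}$, $\frac{2a}{m-1}p^{1-m} \leftrightarrow \frac{2a}{m}p^{-m}$, $\frac{\mu}{m-1} \leftrightarrow k$) and the correct critical constants $\rho_0=\{a(m+2)/\lambda\}^{1/(m+2)}$, $\kappa_0=\frac{\lambda}{m}\rho_0^2$ from the paper's Section 5 list. The barrier argument forcing $p>\rho_0$, the comparison argument for global existence, the root analysis of $p'$ via $p''\geq\tau_0>0$, and the completeness discussion (polar-coordinate smooth extension when $k<\kappa_0$, doubly warped product when $k=\kappa_0$) all match the template of Lemmas \ref{26} and \ref{comple22}.
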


\begin{lemma} \label{5p8}
 If $a >0, \lambda> 0, k > \kappa_0$ , i.e. if {\rm (V)} is satisfied,
 then $(g,w)$ of  Lemma {\rm \ref{LL61}} is incomplete.
\end{lemma}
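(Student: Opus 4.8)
The plan is to use the fact that, under (V), the solution $p$ of (\ref{mu2const1}) is \emph{periodic}, which destroys the single polar-coordinate closure that produced completeness in Lemma \ref{5p6} and Lemma \ref{5p7}. Write $F(p):=\frac{\lambda}{m+2}p^2+\frac{2a}{m}p^{-m}$, so that $(p')^2=k-F(p)$ by (\ref{mu2const1}). Since $a>0$ and $\lambda>0$, $F$ is strictly convex on $(0,\infty)$ with $F''>0$, and $F'(p)=0$ exactly at $p=\rho_0=\{a(m+2)/\lambda\}^{1/(m+2)}$, where $F(\rho_0)=\kappa_0$. As $k>\kappa_0$, the equation $F(p)=k$ has exactly two roots $p_-<\rho_0<p_+$, both positive, and $(p')^2\ge 0$ confines $p$ to $[p_-,p_+]$ and vanishes only at the turning values. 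Hence $p$ oscillates between $p_-$ and $p_+$ and $p'$ has a discrete set of zeros, in contrast to the unique root in the complete cases.

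First I would record the behaviour near a turning point $s_0$ (so $p'(s_0)=0$). By (\ref{et07w}), $p''(s_0)=ap(s_0)^{-m-1}-\frac{\lambda}{m+2}p(s_0)=-\tfrac12 F'(p(s_0))\ne 0$, because $p(s_0)\in\{p_-,p_+\}$ differs from the critical point $\rho_0$. Putting $r=x_3-s_0$, the two-dimensional part satisfies
\begin{align*}
(p')^2\,dx_2^2+dx_3^2=dr^2+\bigl(p''(s_0)\bigr)^2\,r^2\,dx_2^2+O(r^3),
\end{align*}
while the warping factor $p^2\,dx_1^2\to p(s_0)^2\,dx_1^2$ stays nondegenerate. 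Thus $g$ extends smoothly across $r=0$ if and only if $x_2$ is an angular coordinate whose circle has length $\frac{2\pi}{|p''(s_0)|}$; for any other $x_2$-range the $x_2$-direction collapses at $r=0$, so $g$ is singular there or $r=0$ becomes a boundary.

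The crux is that the two turning values cannot be closed by the \emph{same} $x_2$-circle, i.e. $|p''(p_-)|\ne|p''(p_+)|$. Since $p''(p_\pm)=-\tfrac12 F'(p_\pm)$ and $F'(p_-)<0<F'(p_+)$, equality of the moduli would mean $F'(p_-)+F'(p_+)=0$. Feeding this into $F(p_-)=F(p_+)=k$ and using $\frac1m(p_-^{-m}-p_+^{-m})=\int_{p_-}^{p_+}t^{-m-1}\,dt$, the two relations reduce to
\begin{align*}
\frac{p_-^{-m-1}+p_+^{-m-1}}{2}=\frac{1}{p_+-p_-}\int_{p_-}^{p_+}t^{-m-1}\,dt,
\end{align*}
which says that the trapezoidal rule is exact for $t\mapsto t^{-m-1}$ on $[p_-,p_+]$. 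But this function is strictly convex on $(0,\infty)$, so its chord lies strictly above its graph and the left side strictly exceeds the right; the equality is impossible. Therefore $F'(p_-)+F'(p_+)\ne0$ and the two cone angles are genuinely distinct. This convexity step is the main obstacle, and it is exactly where $a>0$, $\lambda>0$ -- that is, hypothesis (V) -- enters.

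It remains to conclude. For any admissible $x_2$-range the metric cannot be smooth and complete: if $x_2$ runs over $\mathbb{R}$, or over a circle whose length is not $\frac{2\pi}{|p''(p_\pm)|}$, then at the corresponding turning value, reached at finite $g$-distance $s_+-s_-<\infty$ along an $x_3$-line, the metric degenerates or develops a cone point, so a unit-speed $x_3$-curve has finite length with no smooth limit in the manifold; and by the previous paragraph no single circle length works simultaneously at $p_-$ and $p_+$. Hence $(g,w)$ of Lemma \ref{LL61} is incomplete. One may also argue directly from $k>0$: then $\tau''=-k\tau$ forces $\tau$, hence $w=\tau p$, to vanish at finite $g$-distance along the $x_1$-lines, so the domain $\{w>0\}$ of the structure is already incomplete.
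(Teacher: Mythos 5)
Your main argument is correct and its skeleton is the same as the paper's proof: under (V) the solution $p$ oscillates between the two roots $p_1<p_2$ of $F(p)=k$, smooth closure at a turning point forces $x_2$ to be an angular coordinate of period $2\pi/|p''|$ there (the paper invokes Section 1.4.1 of Petersen for exactly this), so completeness would require the values of $|p''|$ at the two turning levels to agree; the substance of the proof is ruling out that matching. Where you genuinely differ is in how it is ruled out. The paper substitutes $p''(s_i)=ap_i^{-m-1}-\frac{\lambda}{m+2}p_i$ from (\ref{et07w}) and $F(p_i)=k$ from (\ref{mu2const1}) into the matching condition and reduces it by hand to the polynomial identity $m(p_1^{m+1}+p_2^{m+1})=2(p_1^{m}p_2+p_1^{m-1}p_2^2+\cdots+p_1p_2^{m})$, which it then factors as $(p_1-p_2)^2\bigl\{\sum_{k=1}^{m}k(m-k+1)p_1^{k-1}p_2^{m-k}\bigr\}=0$, impossible for $0<p_1<p_2$. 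You arrive at the very same identity (clearing denominators in your trapezoid equation reproduces it verbatim, with $p_-=p_1$, $p_+=p_2$) but dispose of it by recognizing it as exactness of the trapezoidal rule for the strictly convex function $t\mapsto t^{-m-1}$ on $[p_1,p_2]$, which is impossible. This replaces the factorization by a one-line convexity argument and makes transparent where the sign hypotheses $a>0$, $\lambda>0$ of (V) enter.

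One caveat: your closing alternative -- that $k>0$ makes $\tau$ sinusoidal, so $w=\tau p$ vanishes at finite distance and the structure is ``already incomplete'' -- is not valid in this framework and should be discarded. Vanishing of $w$ at finite distance is compatible with completeness here: in the paper's complete case (ii-1) with $k>0$ (Lemma \ref{5p6} together with Subsection \ref{se2}, e.g.\ the Riemannian Schwarzschild example) one has exactly the same sinusoidal $\tau$, and $\{w=0\}$ is merely the boundary of $M$ over which the associated warped product Einstein metric closes up smoothly on $\mathbb{S}^{m+1}\times\mathbb{R}^2$. If that aside were a proof, it would equally ``prove'' those complete examples incomplete. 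Since your main argument never uses the remark, the proof stands without it.
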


\begin{proof} By Proposition 2.6 in \cite{Ko}, a positive solution $p$ of {\rm (\ref{mu2const1})} is on $\mathbb{R}$,  non-constant and  periodic.
The function $G(p):=k- \frac{\lambda}{m+2}p^2-\frac{2a}{m}p^{-m}$ is decreasing on $p > \rho_0 $,  increasing on  $0<p <\rho_0 $ and  $G( \rho_0) = k- k_0 > 0$. For $p >0$,  $G(p)=0$ has exactly two solutions, say $p_1< p_2$.
  As $(p^{'}(s))^2 = G(p(s))$, at any point $s_0$ with $p^{'}(s_0)=0$,  we have $ G(p(s_0))=0$. Then $p(s_0)= p_1$  or
 $p(s_0)= p_2$. These $p_1, p_2$ are the minimum and maximum of $p(s)$.  There are $s_1, s_2 \in \mathbb{R}$ so that
  $p(s_1) = p_1$ and $p(s_2) = p_2$ and $(s_1, s_2)$ contains no root of $p^{'}$.
  We should consider $g= p^2(x_3)dx_1^2 +     (p^{'})^2 dx_2^2+ dx_3^2 $
when $   s_1 \leq  x_3 \leq s_2 $.
If $(g, w)$ is going to be complete, by section 1.4.1 of \cite{Pe}, we need;
$p^{''} (s_1) = -p^{''}(s_2) \neq 0 $. For this, (\ref{et07w})
gives
\begin{eqnarray} \label{a33}
\frac{\lambda}{m+2} (p_1+ p_2)=ap_1^{-m-1} + ap_2^{-m-1}
\end{eqnarray}
  And (\ref{mu2const1}) gives
$\frac{\lambda}{m+2}p_i^2+\frac{2a}{m}p_i^{-m}=k$ for $i=1,2$, from which we have

$\frac{\lambda}{m+2}( p_1+ p_2)  -\frac{2a}{m p_1 p_2} (p_1^{-m+1}+ p_1^{-m+2}p_2^{-1} + \cdots +p_2^{-m+1}) =0.$ Then,

$\frac{\lambda}{m+2} ( p_1+ p_2) p_1^{m+1}p_2^{m+1}  -p_1p_2\frac{2a}{m } (p_1^{m-1}+ p_1^{m-2}p_2 + \cdots +p_2^{m-1}) =0.$

\noindent From (\ref{a33}), $a(p_1^{m+1} + p_2^{m+1})-p_1p_2\frac{2a}{m } (p_1^{m-1}+ p_1^{m-2}p_2 + \cdots +p_2^{m-1}) =0$. So, $m(p_1^{m+1} + p_2^{m+1})-2(p_1^{m}p_2+ p_1^{m-1}p_2^2 + \cdots +p_1 p_2^{m}) =0$.

Factorizing this, we get $(p_1- p_2)^2 \{  \sum_{k=1}^m k(m-k+1) p_1^{k-1} p_2^{m-k}  \}=0$. This is impossible
as we require $p_1, p_2 >0$ and  $p_1 \neq p_2$.
So, $g$ cannot be complete near $x_3= s_1$ and $x_3= s_2$.

\end{proof}

\begin{prop} \label{pr56}
 Let $(M,g,w)$ be a complete $(\lambda,3+m)$-Einstein manifold such that there are exactly two distinct Ricci-eigen values on an open dense subset and that $\nabla w$ is not a Ricci-eigen vector field and  $E_1 \mu_2 = 0$ for
an adapted frame field $\{ E_i\}$.

Then in some coordinates $(x_1,x_2,x_3)$, $(g,w)$ can be as follows:
\begin{align*}
  g= (p(x_3))^2dx_1^2 +     (p^{'})^2 dx_2^2+ dx_3^2,
\end{align*}
where $p(x_3)$ is a non-constant positive solution on $\mathbb{R}$ of
\begin{equation} \label{mu2const1v}
(p')^2+\frac{\lambda}{m+2}p^2+\frac{2a}{m}p^{-m}=k.
\end{equation}
for constants $a \neq 0, k$ under one of the following conditions;

{\rm (ii-1)} $a>0$, $\lambda\leq 0$,

{\rm (ii-2)} $a<0$, $\lambda<0$, $p(0)>\rho_0$, $k\leq \kappa_0$.

\noindent  And  $w = {\tau}(x_1) p(x_3)$, where ${\tau}(x_1)$ is a non-constant positive function satisfying
\begin{equation} \label{mu2const1v5}
 ({\tau}')^2=-k \cdot \tau^2+\frac{\mu}{m-1}.
\end{equation}

Conversely, any space $(g, w)$ as in the above  satisfies $\lambda_1\neq\lambda_2=\lambda_3$,  $E_1\mu_2=0$ and {\rm (\ref{hpwdef})}.

\end{prop}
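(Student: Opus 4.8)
The plan is to assemble this classification from the local normal form already established in Lemma \ref{LL61} together with the completeness analysis carried out in Lemmas \ref{5p6}, \ref{5p7} and \ref{5p8}. First I would invoke Lemma \ref{LL61}: under the standing hypotheses that $\nabla w$ is not Ricci-eigen and $E_1\mu_2 = 0$, there are local coordinates in which $g = p^2 dx_1^2 + (p')^2 dx_2^2 + dx_3^2$ and $w = \tau(x_1) p(x_3)$, where $p$ is a non-constant positive solution of (\ref{mu2const1}) with $a \neq 0$ and $\tau$ is a non-constant positive solution of (\ref{mu2const1v5}). This already fixes the shape of $(g,w)$ and the two defining ODEs; what remains is to determine precisely which values of the constants $a, \lambda, k, \mu$ are compatible with completeness of $(M, g)$.

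Next I would reduce to Kobayashi's parameter list. Since (\ref{mu2const1}) and (\ref{et07w}) coincide with Kobayashi's equations (2.2) and (2.1) under the match $n \leftrightarrow m+2$, $\frac{R}{n-1} \leftrightarrow \lambda$, every positive solution falls into one of his cases (I) through (VI). I would discard the cases that cannot occur here: conditions I.1 and I.2 force $p$ to be constant, contradicting Lemma \ref{LL61}, while conditions III.1--III.3 yield incomplete metrics by the blow-up argument analogous to Lemma \ref{LL3k} (either a sectional curvature or the potential $w$ diverges at finite distance). Because $a \neq 0$, the only surviving possibilities are IV.1, IV.3 and V.

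Then I would read off completeness case by case. Lemma \ref{5p6} shows IV.1 ($a>0$, $\lambda \leq 0$) gives a complete metric on $\mathbb{R}^3$, which is exactly condition (ii-1). Lemma \ref{5p7} shows IV.3 ($a<0$, $\lambda<0$, $p(0)>\rho_0$, $k \leq \kappa_0$, covering both $k=\kappa_0$ and $k<\kappa_0$) gives a complete metric on $\mathbb{R}^3$, which is condition (ii-2). Lemma \ref{5p8} shows that V ($a>0$, $\lambda>0$, $k>\kappa_0$) is always incomplete, so it is excluded. This establishes the forward direction, with $\rho_0$ and $\kappa_0$ as defined just before those lemmas; the logical point is that a complete $M$, being locally of the above form with globally constant parameters, cannot realize an incomplete regime, so its parameters must lie in IV.1 or IV.3.

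Finally, for the converse I would start from $(g,w)$ in the stated form with parameters satisfying (ii-1) or (ii-2). Completeness, positivity of $w=\tau p$, and the domain $\mathbb{R}^3$ follow directly from Lemmas \ref{5p6} and \ref{5p7}, while the converse half of Lemma \ref{LL61} guarantees that such $(g,w)$ is genuinely a $(\lambda,3+m)$-Einstein manifold with $\lambda_1 \neq \lambda_2 = \lambda_3$, $E_1\mu_2 = 0$, and $\nabla w$ not Ricci-eigen. The step requiring the most care, and the main obstacle, is the exhaustiveness and correct translation of Kobayashi's list: I must verify that no admissible parameter regime is overlooked under the chosen match, and that the polar-coordinate smooth-extension step—the mechanism by which the local product metric $p^2 dx_1^2 + (p')^2 dx_2^2 + dx_3^2$ closes up to a complete metric on $\mathbb{R}^3$ at a root of $p'$—has been checked in each retained case, which is precisely what Lemmas \ref{5p6} and \ref{5p7} supply.
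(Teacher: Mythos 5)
Your proposal is correct and follows essentially the same route as the paper: Proposition \ref{pr56} is indeed assembled by combining the local normal form of Lemma \ref{LL61} with the reduction to Kobayashi's parameter list (discarding I.1--I.2 as constant and III.1--III.3 as incomplete via the Lemma \ref{LL3k} argument) and then invoking Lemmas \ref{5p6}, \ref{5p7} and \ref{5p8} to retain exactly the regimes (ii-1) and (ii-2). The converse likewise follows from the converse half of Lemma \ref{LL61} together with the completeness statements of Lemmas \ref{5p6} and \ref{5p7}, just as you describe.
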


\section{When $\frac{\nabla w}{|\nabla w |} = E_1$}
 Now we shall study the case that $ \nabla w$ is a Ricci-eigen vector field.
In this section we assume that $\nabla w$ is parallel to $E_1$ for an adapted frame field $\{   E_i \}$. We may set
  $\frac{\nabla w}{|\nabla w|}=  E_1$. As $\frac{\nabla w}{|\nabla w|}=  E_1$, we have $\nabla_{E_2} w=  \nabla_{E_3} w= 0$.
In the next section we treat the other case that $\nabla w$ is orthogonal to $E_1$.

 This section includes 3-d locally conformally flat (non-Einstein) m-quasi Einstein manifolds, as they have the property \cite{CMMR} that $ \nabla w$ is Ricci-eigen, $\frac{\nabla w}{|\nabla w |} = E_1$ and  $ \lambda_1 \neq   \lambda_2 = \lambda_3   $.
The next lemma can be proved by standard argument;  see \cite{CMMR} or \cite[Lemma 2.3]{Ki}.

\begin{lemma} \label{threesolb1}
Let  $(M^3, g, w)$ be  a $(\lambda,3+m)$-Einstein manifold with  $ \lambda_1 \neq   \lambda_2 = \lambda_3   $ on an open subset of $ \{ \nabla w \neq 0  \}$. Let $c$ be a regular value of $w$ and $\Sigma_c= \{ x | \ w(x) =c  \}$. If $\frac{\nabla w}{|\nabla w|}=  E_1$ for an adapted frame field $\{   E_i \}$, then the followings hold.

{\rm (i)} $R$ and $ |\nabla w|^2$ are constant on a connected component of $\Sigma_c$.

{\rm (ii)} There is a function $s$ locally defined with   $s(x) = \int  \frac{   d w}{|\nabla w|} $, so that

$ \ \ \ \ ds =\frac{   d w}{|\nabla w|}$ and $E_1 = \nabla s$.

{\rm (iii)} $\nabla_{E_1} E_1=0$.

{\rm (iv)} $R({E_1, E_1})$ and  $R({E_2, E_2})$ are constant on a connected component of $\Sigma_c$, and so depend on the local variable  $s$ only.

{\rm (v)}  Near a point in $\Sigma_c$, the metric $g$ can be written as

$\ \ \ g= ds^2 +  \sum_{i,j >  1} g_{ij}(s, x_2, x_3) dx_i \otimes dx_j$, where
    $x_2, x_3$ is a local coordinates system on $\Sigma_c$.

{\rm (vi)} $\nabla_{E_i}E_1=\zeta(s) E_i, \textrm{$i=2,3$ with }\zeta(s)=\frac{w(\lambda_2-\lambda)}{m|\nabla w|}$.

\end{lemma}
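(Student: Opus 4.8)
The plan is to read off all six assertions from the Hessian equation (\ref{hpwdef}) together with the two identities of Lemma \ref{kklem}, working near a regular value $c$ so that $\Sigma_c$ is a smooth hypersurface orthogonal to $E_1=\frac{\nabla w}{|\nabla w|}$ and $\{E_i\}$ is an honest local frame. For (i), I would first substitute the Ricci-eigen condition $Rc(\nabla w,\cdot)=\lambda_1\,g(\nabla w,\cdot)$ into Lemma \ref{kklem}(ii). The right-hand side then collapses to a scalar multiple of $g(\nabla w,\cdot)$, so $\nabla R$ is proportional to $\nabla w$ and hence parallel to $E_1$; consequently $E_2R=E_3R=0$ and $R$ is constant on each connected component of $\Sigma_c$. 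Putting $n=3$ in Lemma \ref{kklem}(i) gives $(m-1)|\nabla w|^2=\mu-\frac{R+(m-3)\lambda}{m}\,w^2$, and since $\mu$ is a constant, $w\equiv c$ on $\Sigma_c$, and $R$ is now known to be constant there, $|\nabla w|^2$ is constant on $\Sigma_c$ as well.

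For (ii), the constancy of $|\nabla w|^2$ on the level sets means that on the regular region $|\nabla w|$ is a function of $w$ alone, so the one-form $\frac{dw}{|\nabla w|}$ is closed; setting $s=\int\frac{dw}{|\nabla w|}$ then gives $ds=\frac{dw}{|\nabla w|}$, and dualizing yields $\nabla s=\frac{\nabla w}{|\nabla w|}=E_1$ with $|\nabla s|\equiv1$. For (iii) I would compute $\nabla_{E_1}E_1$ directly: by (\ref{hpwdef}), for any $X$ we have $\langle\nabla_{\nabla w}\nabla w,X\rangle=\nabla dw(\nabla w,X)=\frac{w}{m}(Rc-\lambda g)(\nabla w,X)$, which is a multiple of $\langle\nabla w,X\rangle$ because $\nabla w$ is Ricci-eigen; hence $\nabla_{\nabla w}\nabla w$ is parallel to $E_1$. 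Rescaling to the unit field $E_1$ and using $\langle\nabla_{E_1}E_1,E_1\rangle=0$ forces $\nabla_{E_1}E_1=0$ (equivalently, $E_1=\nabla s$ has geodesic integral curves since $|\nabla s|\equiv1$).

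For (iv) I would use $\nabla_{E_1}E_1=0$ and $E_1w=|\nabla w|$ to rewrite the Hessian equation on $(E_1,E_1)$ as $\frac{w}{m}(\lambda_1-\lambda)=\nabla dw(E_1,E_1)=E_1(E_1w)=\partial_s|\nabla w|$, whose right-hand side is a function of $s$ alone by (i); since $w$ is a function of $s$, so is $\lambda_1$, and then $\lambda_2=\frac{R-\lambda_1}{2}$ is as well, giving the claim. For (v) I would take coordinates $(x_2,x_3)$ on $\Sigma_c$ and transport them by the geodesic flow of $E_1=\nabla s$, so that $\partial_s=E_1$ and $g_{ss}=1$; differentiating the cross term gives $\partial_s\,g(E_1,\partial_{x_i})=\langle\nabla_{E_1}E_1,\partial_{x_i}\rangle+\langle E_1,\nabla_{\partial_{x_i}}E_1\rangle=0+\tfrac12\partial_{x_i}|E_1|^2=0$ by (iii), and since it vanishes on $\Sigma_c$ it vanishes identically, producing the orthogonal form $g=ds^2+\sum_{i,j>1}g_{ij}\,dx_i\otimes dx_j$. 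For (vi), for $i,j\in\{2,3\}$ I would use $E_jw=0$ to get $\nabla dw(E_i,E_j)=-(\nabla_{E_i}E_j)w=|\nabla w|\,\langle\nabla_{E_i}E_1,E_j\rangle$; equating with $\frac{w}{m}(\lambda_2-\lambda)\delta_{ij}$ and recalling $\langle\nabla_{E_i}E_1,E_1\rangle=0$ yields $\nabla_{E_i}E_1=\zeta(s)E_i$ with $\zeta=\frac{w(\lambda_2-\lambda)}{m|\nabla w|}$, a function of $s$ by (i) and (iv).

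I do not expect a serious obstacle, since the whole lemma is driven by the Ricci-eigen hypothesis collapsing Lemma \ref{kklem}(ii). The two points needing care are the passage in (ii), where one must know that $|\nabla w|$ genuinely descends to a function of $w$ so that $s$ is well defined — this is exactly what the constancy in (i) supplies — and the coordinate construction in (v), where the cross terms $g(E_1,\partial_{x_i})$ must be shown to vanish identically rather than merely initially; both reduce to (iii) and $|E_1|\equiv1$. Throughout one stays near a regular value $c$ so that $\Sigma_c$ is a smooth hypersurface and the frame is well defined.
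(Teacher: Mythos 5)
Your proposal is correct and follows essentially the same route as the paper's proof: every part is read off from the Hessian equation (\ref{hpwdef}) and Lemma \ref{kklem}, using the Ricci-eigen hypothesis to collapse the relevant identities, exactly as the paper does. The only deviations are minor and both valid: for the constancy of $|\nabla w|^2$ you invoke Lemma \ref{kklem}(i), where the paper instead computes $E_i(|\nabla w|^2)=2\nabla dw(E_i,\nabla w)=0$ directly from (\ref{hpwdef}), and for (iv) you use the $(E_1,E_1)$-component of the Hessian equation, where the paper evaluates Lemma \ref{kklem}(ii) along $E_1$ and combines the resulting identity with $R=\lambda_1+2\lambda_2$.
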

\begin{proof}
By assumption, for $i=2,3$,  $R(\nabla w, E_i) =0$ and Lemma \ref{kklem} (ii) gives $E_i(R) =0$. Equation {\rm (\ref{hpwdef})} gives $E_i(|\nabla w|^2) =0$. We can see $d( \frac{dw}{|dw |} )=0$.
$\langle  \nabla_{E_1} E_1 , E_1 \rangle =0$ is trivial. We can get
$\langle  \nabla_{E_1} E_1 , E_i \rangle =\langle  \nabla_{E_1} (\frac{\nabla w}{|\nabla w |} ), E_i \rangle =0$ from  {\rm (\ref{hpwdef})}. (i), (ii) and (iii) are proved.
As $\nabla w$ and the level surfaces of $w$ are perpendicular, we get (v).

 Lemma \ref{kklem} (ii) gives
 $ \   \frac{w}{2(m-1)}E_1 R  = -R(E_1,E_1 )|\nabla w |+ \frac{(n - 1)\lambda - R }{ m-1}  |\nabla w | $.
From this,
$   -\frac{w}{2 |\nabla w |}E_1 R +2\lambda =  m\lambda_1  + 2\lambda_2$, the left hand side of which  is a function of $s$ only, as is $E_1(R) = \frac{dR}{ds}$. Since $R= \lambda_1  + 2\lambda_2$ is a function of $s$ only, we have that
$ \lambda_1$ and $\lambda_2$ both depend on $s$ only. We get (iv).
{\rm (\ref{hpwdef})} can be used to prove (vi).
\end{proof}

\begin{lemma} \label{claim112b3}
Let  $(M,g,w)$ be  a $(\lambda,3+m)$-Einstein manifold.
Suppose that $ \lambda_1  \neq \lambda_2=  \lambda_3$ and  $\frac{\nabla w}{|\nabla w|}=  E_1$ for an adapted frame field $\{ E_i\}$,
on an open subset $U$ of $\{ \nabla w \neq 0  \}$.

\smallskip
Then near any point of $U$, there exists local coordinates $(s, x_2, x_3)$  such that $\nabla s= \frac{\nabla w }{ |\nabla w |}$ and $g$ can be written as
\begin{equation} \label{mtr1a3}
g= ds^2 +      h(s)^2 \tilde{g},
\end{equation}
 where  $h:=h(s)$ is a smooth function and
 $\tilde{g}$ is (a pull-back of) a Riemannian metric of constant curvature on a $2$-dimensional domain with $x_2, x_3$ coordinates.
In particular,  $g$ is locally conformally flat.


\end{lemma}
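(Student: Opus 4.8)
The plan is to extract the warped-product structure from the umbilicity of the level sets of $w$ and then to force constant curvature of the fibre by the condition $\lambda_2=\lambda_3$ together with part (iv) of Lemma \ref{threesolb1}. First I would invoke part (v) of Lemma \ref{threesolb1} to fix coordinates $(s,x_2,x_3)$ with $\nabla s=\frac{\nabla w}{|\nabla w|}$ in which $g=ds^2+\sum_{i,j>1}g_{ij}(s,x_2,x_3)\,dx_i\otimes dx_j$, so that $E_1=\partial_s$ is the unit normal to the level set $\Sigma_s=\{s=\mathrm{const}\}$. Part (vi) says $\nabla_{E_i}E_1=\zeta(s)E_i$ for $i=2,3$ with $\zeta(s)=\frac{w(\lambda_2-\lambda)}{m|\nabla w|}$; since $w$, $|\nabla w|$ and $\lambda_2$ are all constant on each connected component of $\Sigma_s$ by parts (i) and (iv), the function $\zeta$ indeed depends on $s$ alone. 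Using $E_i\langle E_j,E_1\rangle=0$ one then reads off that each $\Sigma_s$ is totally umbilic with second fundamental form $\langle\nabla_{\partial_i}\partial_j,\partial_s\rangle=-\zeta(s)\,g_{ij}$.

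The next step is to interpret this umbilicity as an ODE in $s$. For a metric of the above block form one computes $\langle\nabla_{\partial_i}\partial_j,\partial_s\rangle=\Gamma^s_{ij}=-\tfrac12\partial_s g_{ij}$, so comparing with the previous expression gives $\partial_s g_{ij}=2\zeta(s)\,g_{ij}$ for all $i,j\in\{2,3\}$. Treating $x_2,x_3$ as parameters, this linear system integrates to $g_{ij}(s,x_2,x_3)=h(s)^2\,\tilde g_{ij}(x_2,x_3)$, where $h(s):=\exp\!\big(\int_{s_0}^{s}\zeta(u)\,du\big)$ and $\tilde g_{ij}(x_2,x_3):=g_{ij}(s_0,x_2,x_3)$ is the induced metric on $\Sigma_{s_0}$. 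Hence $g=ds^2+h(s)^2\tilde g$, the desired warped-product form over the $s$-interval with two-dimensional fibre $(\Sigma_{s_0},\tilde g)$.

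It remains to show that $\tilde g$ has constant curvature, which I regard as the crux of the argument. Applying the warped-product Ricci formula to $g=ds^2+h(s)^2\tilde g$ with a two-dimensional fibre, and using that $\widetilde{Rc}=K_{\tilde g}\,\tilde g$ in dimension two, the tangential Ricci eigenvalue is
\begin{equation}
\lambda_2=R(E_2,E_2)=\frac{K_{\tilde g}}{h(s)^2}-\frac{h''}{h}-\frac{(h')^2}{h^2}, \nonumber
\end{equation}
where $K_{\tilde g}=K_{\tilde g}(x_2,x_3)$ is the Gauss curvature of $\tilde g$. By part (iv) of Lemma \ref{threesolb1}, $\lambda_2$ depends only on $s$, and every $h$-term depends only on $s$; therefore $K_{\tilde g}/h(s)^2$ is a function of $s$ alone. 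Since $K_{\tilde g}$ is a function of $(x_2,x_3)$ only while $h^2$ is a function of $s$ only, this forces $K_{\tilde g}$ to be constant, so $\tilde g$ has constant curvature. Finally, a warped product $ds^2+h(s)^2\tilde g$ over an interval whose fibre has constant curvature is locally conformally flat (equivalently, in dimension three its Cotton tensor vanishes), which yields the last assertion. The only delicate point is this constant-curvature step: it is precisely the interplay between the separation of variables in the warped metric and the $s$-dependence of $\lambda_2$ from Lemma \ref{threesolb1}(iv) that excludes any nonconstant fibre curvature.
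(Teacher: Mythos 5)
Your proposal is correct and follows essentially the same route as the paper's proof: both derive the warped-product form $g = ds^2 + h(s)^2\tilde g$ by integrating the umbilicity relation $\partial_s g_{ij} = 2\zeta(s)\,g_{ij}$ obtained from Lemma \ref{threesolb1}(vi), and both then force constancy of the fibre curvature from the fact that the relevant curvature quantities are constant along level sets of $w$. The only difference is cosmetic: the paper extracts the constancy of the fibre's curvature from the Gauss--Codazzi equation together with Lemma \ref{threesolb1}(i),(iv), whereas you use the warped-product Ricci formula $\lambda_2 = K_{\tilde g}/h^2 - h''/h - (h')^2/h^2$ together with Lemma \ref{threesolb1}(iv) and a separation-of-variables argument; both are valid computations of the same intrinsic quantity.
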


 \begin{proof}
For the metric $g$  of Lemma \ref{threesolb1} (v), one easily gets $E_1 =\frac{\partial }{\partial s} $.
We write $\partial_{1}:=\frac{\partial }{\partial s}$ and $\partial_{1}:=\frac{\partial }{\partial x_i}, i=2,3$ .

We consider the second fundamental form $\tilde{ h}$ of a leaf for $E_{23}$ with respect to $E_1$;
$\tilde{ h}  ( u , u ) =  -  \langle \nabla_{u} u ,  E_1\rangle  $. As the leaf is totally umbilic by Lemma  \ref{derdlem} {\rm (ii)}, $\tilde{ h} ( u , u ) =    \eta \cdot g( u , u) $ for some function  $\eta$ and any $u$ tangent to a leaf.
Then, $\tilde{ h} (E_2, E_2 ) = -  \langle \nabla_{E_2} E_2 ,  E_1\rangle = \eta=   \zeta   $,  which is a function of $s$ only by  Lemma \ref{threesolb1} (vi).

For $i, j \in \{ 2,3 \}$,
\begin{eqnarray*}
\zeta  g_{ij}& =\tilde{ h} ( \partial_i , \partial_{j} )  =   -  \langle \nabla_{\partial_i}  \partial_{j} ,   \frac{\partial }{\partial s}\rangle =  -  \langle\sum_k \Gamma^{k}_{i{j}}  \partial_k ,   \frac{\partial }{\partial s}  \rangle \ \ \ \ \ \ \  \\
& =  - \sum_k \langle  \frac{1}{2} g^{kl}( \partial_i g_{lj} +\partial_{j} g_{li} - \partial_l g_{ij} )\partial_k ,  \frac{\partial }{\partial s} \rangle       = \frac{1}{2} \frac{\partial }{\partial s} g_{i{j}}.
\end{eqnarray*}
 So,  $\frac{1}{2} \frac{\partial }{\partial s} g_{i{j}} =   \zeta g_{ij}$. Integrating it, for $i, j \in \{ 2,3 \}$, we get $ g_{ij} = e^{C_{ij}} h(s)^2$. Here the function $h(s)>0$ is independent of $i,j$ and each function $C_{ij}$ depends only on $x_2, x_3$.

 Now  $g$ can be  written as $g= ds^2 +     h(s)^2 \tilde{g} $, where $\tilde{g}$ can be viewed as a Rimannian metric in a domain of $(x_2, x_3)$-plane.

From Gauss-Codazzi equation,
$R^{g} = R^{\tilde{g}} + 2 Ric^{g}(E_1,E_1)+ \|h\|^2 - H^2$.  As all others are constant on a hypersurface of $w$, so is $R^{\tilde{g}}$. Therefore each hypersurface has constant curvature. Thus $\tilde{g}$ has constant curvature and
 $g$ is locally conformally flat.
 \end{proof}
For the metric of (\ref{mtr1a3}), the equation
 (\ref{hpwdef}) is equivalent to;
\begin{align*}
 m \frac{w^{''}}{w}&=-2 \frac{h^{''}}{h}-\lambda ,\\
  m \frac{w^{'}}{w}\frac{h^{'}}{h}&=  -\frac{h^{''}}{h} - (\frac{h^{'}}{h})^2 + \frac{k}{h^2}  -\lambda.
\end{align*}
 where $k$ is the sectional curvature of $\tilde{g}$.

\section{ When $\frac{\nabla w}{|\nabla w |} = E_3$ }

We still assume  $ \nabla w$ is a Ricci-eigen vector field on a domain and  $ \lambda_1=R(E_1, E_1) \neq  \lambda_2  =  \lambda_3$.
We assume that $\nabla w$ is orthogonal to $E_1$. We may choose $E_2, E_3$ so that $ E_3=\frac{\nabla w}{|\nabla w |}$. Then $E_1w=E_2w=0$.
 The next lemma can be proved similarly as Lemma  \ref{threesolb1}.

\begin{lemma} \label{threesolb12}
Let  $(M^3, g, w)$ be  a $(\lambda,3+m)$-Einstein manifold.
 Assume $ \lambda_1=R(E_1, E_1) \neq  \lambda_2  =  \lambda_3$ and $ E_3=\frac{\nabla w}{|\nabla w |}$ on an open subset of $ \{ \nabla w \neq 0  \}$.
 Let $c$ be a regular value of $w$ and $\Sigma_c= \{ x | \ w(x) =c  \}$. Then the followings hold.

{\rm (i)} $R$ and $ |\nabla w|^2$ are constant on a connected component of $\Sigma_c$.

{\rm (ii)} $\nabla_{E_3} E_3=0$.

{\rm (iii)} $\lambda_1$ and  $\lambda_2$ are constant on a connected component of $\Sigma_c$.

\end{lemma}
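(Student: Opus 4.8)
The plan is to mimic the proof of Lemma \ref{threesolb1} line by line, performing the substitution $E_1 \leftrightarrow E_3$ and keeping track of the fact that the distinguished unit field is now $E_3 = \frac{\nabla w}{|\nabla w|}$, which carries the \emph{repeated} eigenvalue $\lambda_2$ rather than the simple one. Two structural facts drive everything. First, since $\{E_i\}$ is a Ricci-eigen frame with $\lambda_2=\lambda_3$, we have $R(E_3,E_i)=0$ for $i=1,2$ and $R(E_3,E_3)=\lambda_2$. Second, because $\nabla w = |\nabla w|\,E_3$ is normal to the regular level set $\Sigma_c$, the surface $\Sigma_c$ is tangent to the distribution spanned by $E_1,E_2$; hence a function is constant on a connected component of $\Sigma_c$ precisely when it is annihilated by both $E_1$ and $E_2$. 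Throughout I use $E_1w=E_2w=0$.

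For (i) I would feed $E_1,E_2$ into Lemma \ref{kklem}(ii). For $i=1,2$ we have $g(\nabla w,E_i)=E_iw=0$ and $R(\nabla w,E_i)=|\nabla w|\,R(E_3,E_i)=0$, so the right-hand side vanishes and $\frac{w}{2(m-1)}E_iR=0$; as $w\neq 0$ this yields $E_iR=0$, i.e. $R$ is constant on components of $\Sigma_c$. For $|\nabla w|^2$ I would differentiate along $E_i$, writing $\tfrac12 E_i|\nabla w|^2 = \nabla dw(E_i,\nabla w) = |\nabla w|\,\frac{w}{m}\bigl(R(E_i,E_3)-\lambda g(E_i,E_3)\bigr)$ by {\rm (\ref{hpwdef})}, which is zero for $i=1,2$. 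This proves (i). For (ii) I would compute $\langle\nabla_{E_3}E_3,E_3\rangle=\tfrac12 E_3|E_3|^2=0$ and, for $i=1,2$, expand $\nabla_{E_3}E_3=\nabla_{E_3}\bigl(\tfrac{\nabla w}{|\nabla w|}\bigr)$: the contribution from differentiating $|\nabla w|^{-1}$ is proportional to $\nabla w=|\nabla w|E_3$ and so is orthogonal to $E_i$, while $\langle\tfrac{1}{|\nabla w|}\nabla_{E_3}\nabla w,E_i\rangle=\tfrac{1}{|\nabla w|}\nabla dw(E_3,E_i)=\tfrac{w}{m|\nabla w|}\bigl(R(E_3,E_i)-\lambda g(E_3,E_i)\bigr)=0$. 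Hence $\nabla_{E_3}E_3=0$.

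For (iii) I would contract Lemma \ref{kklem}(ii) with $\nabla w$ itself. Using $R(\nabla w,\nabla w)=|\nabla w|^2\lambda_2$, $\langle\nabla R,\nabla w\rangle=|\nabla w|\,E_3R$, and $n=3$, then dividing by $|\nabla w|^2$ and clearing $m-1$, I expect to arrive at $-\tfrac{w}{2|\nabla w|}E_3R+2\lambda=\lambda_1+(m+1)\lambda_2$ after substituting $R=\lambda_1+2\lambda_2$. By (i) both $R$ and $|\nabla w|$ are constant on components of $\Sigma_c$, so locally $R$ is a function of $w$ alone and $E_3R=|\nabla w|\,\tfrac{dR}{dw}$ is itself constant on components of $\Sigma_c$; consequently the whole left-hand side, and hence $\lambda_1+(m+1)\lambda_2$, is constant there. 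Subtracting $R=\lambda_1+2\lambda_2$ leaves $(m-1)\lambda_2$ constant on each component of $\Sigma_c$, and since $m>1$ this forces $\lambda_2$, and then $\lambda_1=R-2\lambda_2$, to be constant on each component of $\Sigma_c$, proving (iii).

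The computations are routine once these reductions are in place; the one point that needs care is the claim in (iii) that $E_3R$ is constant along $\Sigma_c$, which is exactly why (i) must be established first and then invoked to regard $R$ and $|\nabla w|$ as functions of $w$ only. Beyond this bookkeeping I anticipate no genuine obstacle, the argument being structurally identical to that of Lemma \ref{threesolb1}.
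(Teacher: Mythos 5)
Your proposal is correct and is essentially the paper's own argument: the paper proves this lemma simply by saying it "can be proved similarly as Lemma \ref{threesolb1}", and your line-by-line adaptation (using Lemma \ref{kklem}(ii) with the repeated eigenvalue $\lambda_2$ at $\nabla w$, equation (\ref{hpwdef}) for $|\nabla w|^2$ and $\nabla_{E_3}E_3$, and the identity $-\frac{w}{2|\nabla w|}E_3R+2\lambda=\lambda_1+(m+1)\lambda_2$ in place of $m\lambda_1+2\lambda_2$) is exactly that intended adaptation. The care you take with constancy of $E_3R$ along level components, via $R=R(w)$ locally, matches the role played by the function $s$ in the paper's proof of Lemma \ref{threesolb1}(iv).
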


\begin{lemma} \label{claim112na2}
Under the same hypothesis as Lemma {\rm \ref{threesolb12}},  the followings hold;

 \medskip
 $\nabla_{E_i}  E_3= \zeta_i E_i$ for $i=1,2$, with $ \zeta_i= \frac{ w}{m} \frac{ R(E_i, E_i) -  \lambda }{  | \nabla w |}  $.

$  \nabla_{E_1}  E_1 = -\zeta_1  E_3, \ $ $  \ \ \ \ \ \ \ \nabla_{E_1}  E_2=0$,  $ \ \ \ \  \nabla_{E_2}  E_1=  0 $.

$ \nabla_{E_2}  E_2 =  -\zeta_2 E_3$,  $ \ \ \ \ \nabla_{E_3}  E_1= 0, \ $ $\ \ \  \nabla_{E_3}  E_2= 0, \ \ \ \  $ $ \nabla_{E_3}  E_3=0$.

$[E_3, E_2]= -\zeta_2 E_2, \ \ \ \ \ \ $  $ \ [E_3, E_1]= -\zeta_1 E_1, \   $  $ \  \ \ \ \ \ [E_2, E_1]=0.$

\medskip
$\zeta_1$ and $ \zeta_2 $ are constant on a connected component of $\Sigma_c$.

\medskip
 The three {\rm 2}-d distributions $E_{ij}$ spanned by $E_i$ and $E_j$, $ i\neq j $ are integrable.
\end{lemma}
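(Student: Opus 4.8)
The plan is to pin down every connection coefficient $\langle \nabla_{E_i} E_j, E_k\rangle$ of the adapted frame, after which the brackets, the integrability of the $E_{ij}$, and the constancy of the $\zeta_i$ all follow formally. There are two sources of information. The Hessian equation (\ref{hpwdef}) sees only the direction $\nabla w = |\nabla w|\,E_3$, so it determines exactly the coefficients that involve $E_3$; the remaining \emph{horizontal} coefficients must be extracted from the Codazzi tensor $\mathcal{C}$ of Lemma \ref{threesolx} via Lemma \ref{derdlem} (i).

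First I would exploit (\ref{hpwdef}). Writing $W := |\nabla w| = E_3 w$ and using $E_1 w = E_2 w = 0$ together with orthonormality (so $\langle \nabla_{E_i} E_j, E_j\rangle = 0$ and $\langle \nabla_{E_i} E_j, E_k\rangle = -\langle \nabla_{E_i} E_k, E_j\rangle$), one has for all $i,j$
\[
\nabla dw(E_i, E_j) = E_i(E_j w) - \langle \nabla_{E_i} E_j, E_3\rangle\, W = \tfrac{w}{m}\big(R(E_i,E_j) - \lambda\,\delta_{ij}\big).
\]
The diagonal pairs give $\langle \nabla_{E_1}E_1,E_3\rangle = -\zeta_1$ and $\langle \nabla_{E_2}E_2,E_3\rangle=-\zeta_2$ with $\zeta_i = \frac{w}{m}\frac{R(E_i,E_i)-\lambda}{W}$; the off-diagonal pairs force $\langle\nabla_{E_1}E_2,E_3\rangle = \langle\nabla_{E_2}E_1,E_3\rangle = \langle\nabla_{E_3}E_1,E_3\rangle=\langle\nabla_{E_3}E_2,E_3\rangle=0$ and $E_1 W = E_2 W = 0$. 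By antisymmetry this already yields $\nabla_{E_1}E_3 = \zeta_1 E_1$, $\nabla_{E_2}E_3 = \zeta_2 E_2$, and $\nabla_{E_3}E_3 = 0$ (re-deriving Lemma \ref{threesolb12} (ii)).

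The harder part is the horizontal coefficients $\langle\nabla_{E_1}E_1,E_2\rangle$, $\langle\nabla_{E_2}E_2,E_1\rangle$ and $\langle\nabla_{E_3}E_1,E_2\rangle$, which (\ref{hpwdef}) cannot detect. Here I would use that $E_1,E_2$ are tangent to the level surface $\Sigma_c$ (because $\nabla w\parallel E_3$), so by Lemma \ref{threesolb12} (i),(iii) and $w\equiv c$ on $\Sigma_c$ the Codazzi eigenvalues satisfy $E_1\mu_i = E_2\mu_i = 0$ for $i=1,2$. Applying Lemma \ref{derdlem} (i) three times then finishes the job: with $v=E_1,\ u_1=u_2=E_2$ one gets $(\mu_2-\mu_1)\langle\nabla_{E_2}E_2,E_1\rangle = E_1\mu_2 = 0$, hence $\langle\nabla_{E_2}E_2,E_1\rangle=0$; with $v=E_2,\ u_1=u_2=E_1$ one gets $\langle\nabla_{E_1}E_1,E_2\rangle=0$; and with $v=E_1,\ u_1=E_3,\ u_2=E_2$ the left side is $E_1\mu_2\cdot g(E_3,E_2)=0$, giving $\langle\nabla_{E_3}E_2,E_1\rangle=0$ and so $\langle\nabla_{E_3}E_1,E_2\rangle=0$. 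Combined with the antisymmetry relations this produces all of $\nabla_{E_1}E_1=-\zeta_1E_3$, $\nabla_{E_2}E_2=-\zeta_2E_3$, $\nabla_{E_1}E_2=\nabla_{E_2}E_1=0$ and $\nabla_{E_3}E_1=\nabla_{E_3}E_2=0$.

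With every covariant derivative in hand, the remaining assertions are immediate: the brackets come from $[E_i,E_j]=\nabla_{E_i}E_j-\nabla_{E_j}E_i$, giving $[E_3,E_2]=-\zeta_2E_2$, $[E_3,E_1]=-\zeta_1E_1$, $[E_2,E_1]=0$; the constancy of $\zeta_1,\zeta_2$ on a component of $\Sigma_c$ follows from their defining formula, since $w\equiv c$, $\lambda$ is a fixed constant, and $|\nabla w|$ and $R(E_i,E_i)$ are constant on $\Sigma_c$ by Lemma \ref{threesolb12} (i),(iii); and the integrability of $E_{12},E_{13},E_{23}$ follows from Frobenius, as each bracket lands back in the corresponding span. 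I expect the only real obstacle to be the three horizontal coefficients, precisely because they are invisible to the potential $w$; the Codazzi structure of $\mathcal{C}$, together with the level-set constancy of its eigenvalues, is exactly what removes this obstruction.
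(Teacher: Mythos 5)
Your proof is correct and follows essentially the same route as the paper: the paper likewise kills the horizontal coefficients $\langle\nabla_{E_1}E_1,E_2\rangle$, $\langle\nabla_{E_2}E_2,E_1\rangle$, $\langle\nabla_{E_3}E_2,E_1\rangle$ by applying Lemma \ref{derdlem} (i) to the Codazzi tensor $\mathcal{C}$, using that its eigenvalues $\mu_j$ are annihilated by $E_1,E_2$ thanks to the level-set constancy in Lemma \ref{threesolb12}, and then reads off the remaining coefficients, brackets, constancy of $\zeta_1,\zeta_2$, and integrability from (\ref{hpwdef}) exactly as you do. The only immaterial difference is bookkeeping: you obtain $\nabla_{E_3}E_3=0$ from the Hessian equation, whereas the paper lists $\langle\nabla_{E_3}E_3,E_1\rangle=0$ among the consequences of the Codazzi argument.
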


\begin{proof}
Recall that for $j=1,2$,  the eigenvalues of our Codazzi tensor ${\mathcal C}$ are $ \mu_j=w^{m+1}   \lambda_j  -\frac{m}{2(m-1)} R w^{m+1} +\frac{1}{ m-1} \lambda  w^{m+1} $. By Lemma \ref{threesolb12}, $E_i (\lambda_j)=0$ and  $E_i (\mu_j)=0$  for $i, j \leq 2$.
By Lemma \ref{derdlem} (i) and $\mu_2=\mu_3$, we have $\langle\nabla_{E_1}  E_1, E_2  \rangle=0$, $\langle\nabla_{E_2}  E_2, E_1  \rangle=\langle\nabla_{E_3}  E_3, E_1  \rangle=0$ and $ \langle\nabla_{E_3}  E_2, E_1\rangle=  \langle\nabla_{E_2}  E_3, E_1\rangle= 0$.

Other formulas can be easily obtained from {\rm (\ref{hpwdef})}.  The constancy of  $\zeta_i $  on a connected component of $\Sigma_c$ is from Lemma \ref{threesolb12}.
Integrability of $E_{ij}$ is from bracket formulas.
\end{proof}

By Lemma \ref{claim112b8},  the metric $g$ can be written as
$g=     g_{11}dx_1^2 + g_{22}dx_2^2   + g_{33}dx_3^2,$
where $g_{ij}$ are functions  of $(x_1, x_2, x_3)$ and  $E_i =  \frac{1}{\sqrt{g_{ii}}}\frac{\partial }{\partial x_i} $ for $i=1,2,3$.
By (\ref{e1f545}), $\nabla_{E_3}E_3=0$ tells us that $\partial_1g_{33}=\partial_2g_{33}=0$. Thus we can
 replace  $g_{33}dx_3^2$  by $dx_3^2$. In this new coordinates  $(x_1, \ x_2, \ x_3)$ we write
 $\partial_i := \frac{\partial}{\partial x_i}$.
From $\langle\nabla_{E_2}  E_2,  E_1\rangle = 0$, we get  $ \partial_1 g_{22}=0$. Similarly,  $ \partial_2 g_{11}=0$. We may write for some functions $p$ and $h$,
\begin{equation} \label{ggg21}
g=   ( p(x_1, x_3))^2 dx_1^2 +(h(x_2, x_3))^2 dx_2^2  + dx_3^2.
\end{equation}
With $E_2 =\frac{1}{h(x_2, x_3)}\partial_2,$ $E_3 =\partial_3$ and $E_1 =\frac{1}{p(x_1, x_3)}\partial_1$, we get
 $ \zeta_1(x_3)=  \langle\nabla_{E_1}E_3, E_1\rangle=\frac{\partial_3g_{11}}{2g_{11}}= \frac{\partial_3 p}{p} $ and $ \zeta_2(x_3)= \langle\nabla_{E_2}E_3, E_2\rangle= \frac{\partial_3h}{h} $. Here $\zeta_i$ depend only on $x_3$, because they  are constant on a connected component of $\Sigma_c$ which is tangent to the span of $\partial_1$ and  $\partial_2$.

Integrating them, we have $h(x_2, x_3) = h_1(x_2) \cdot h_2(x_3) $ and $p(x_1, x_3) = p_1(x_1) \cdot p_2(x_3) $  with positive functions $h_1, h_2, p_1, p_2$.

Replacing $h_1^2 dx_2^2$ by $dx_2^2$  and  $p_1^2 dx_1^2$  by $dx_1^2$,
we may write
 \begin{equation} \label{ggg22}
g= p^2(x_3) dx_1^2    + h^2(x_3)dx_2^2 +d x_3^2,
\end{equation}
with
\begin{equation} \label{ggg216}
\zeta_1(x_3)= \frac{d}{dx_3}( \ln p) \  \  \ \ \ \ \  \   \ \zeta_2(x_3)= \frac{d}{dx_3} ( \ln h).
\end{equation}
The condition $\lambda_2= \lambda_3$ is equivalent to $R_{2112} =R_{3113}$ where
$R_{3113} =- \frac{p^{''}}{p}$ and $R_{2112}= -\frac{p^{'} h^{'}}{ph}$.
We get
\begin{align}
  \frac{p^{''}}{p}= \frac{p^{'} h^{'}}{ph}. \label{relph}
\end{align}
\begin{lemma} \label{cl02}
For the metric in {\rm (\ref{ggg22})}, if $p$ is not a constant, then the metric $g$ becomes
\begin{align*}
  g=(p(x_3))^2dx_1^2+(p'(x_3))^2dx_2^2+dx_3^2,
\end{align*}
where $p$ is a solution of $(p')^2+\frac{\lambda}{m+2}p^2+\frac{2a}{m}p^{-m}=0$ for a constant $a\neq 0$. And the potential function $w=c_1p$ for a constant $c_1>0$. The constant $\mu$ of {\rm (\ref{consteq})} is zero.
Conversely, any $(g, w)$ as above  satisfies $\lambda_1\neq\lambda_2=\lambda_3$,  $\frac{\nabla w}{|\nabla w |} = E_3$, $\mu=0$
 and {\rm (\ref{hpwdef})}.

\smallskip
The metric $g$ is complete precisely when $a>0$, $\lambda<0$.
\end{lemma}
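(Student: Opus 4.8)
The plan is to begin from the normal form (\ref{ggg22}) with (\ref{ggg216}) and the relation (\ref{relph}) already in hand, and then to feed the equation (\ref{hpwdef}), evaluated on the adapted frame, into these data so as to determine $h$ in terms of $p$, the potential $w$ in terms of $p$, and finally an ordinary differential equation for $p$. First I would rewrite (\ref{relph}) as $p''/p'=h'/h$, legitimate since $p$ is non-constant and hence $p'\neq 0$ on the domain in question; integrating gives $h=Cp'$ for a constant $C\neq 0$, and rescaling $x_2$ to absorb $C$ puts the metric in the stated form $g=p^2dx_1^2+(p')^2dx_2^2+dx_3^2$. A direct computation of the sectional curvatures of this metric then yields $\lambda_1=R(E_1,E_1)=-2p''/p$ and $\lambda_2=\lambda_3=-p''/p-p'''/p'$, confirming $\lambda_2=\lambda_3$.

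Next, using the connection and bracket formulas of Lemma \ref{claim112na2} (so $\nabla_{E_3}E_3=0$, $\nabla_{E_i}E_i=-\zeta_iE_3$ with $\zeta_1=p'/p$ and $\zeta_2=p''/p'$, and $E_1w=E_2w=0$, $E_3=\partial_3$), I would evaluate (\ref{hpwdef}) on the pairs $(E_3,E_3)$, $(E_2,E_2)$ and $(E_1,E_1)$. The first two carry the common eigenvalue $\lambda_2$, so comparing them gives $w''/w'=p''/p'$, whence $w'=c_1p'$ and $w=c_1p+c_2$ for constants $c_1\neq 0$ and $c_2$. The crux of the argument, and the step I expect to be the main obstacle, is to force $c_2=0$: I would substitute $w=c_1p+c_2$ into both the $(E_1,E_1)$ and the $(E_3,E_3)$ equations, differentiate the $(E_1,E_1)$ equation once in $x_3$, and eliminate the resulting $p'''$ term using the $(E_3,E_3)$ equation. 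After cancellation everything collapses to $c_2\,p'(2p''/p+\lambda)=0$; since $p'\neq 0$, and since $2p''/p+\lambda\equiv 0$ would make the $(E_1,E_1)$ equation read $(p')^2\equiv 0$, a contradiction, I conclude $c_2=0$. As $w,p>0$ this gives $w=c_1p$ with $c_1>0$.

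With $w=c_1p$ the $(E_1,E_1)$ equation reduces to $m(p')^2+2pp''+\lambda p^2=0$; multiplying by $p^{m-1}p'$ and integrating produces $(p')^2+\frac{\lambda}{m+2}p^2+\frac{2a}{m}p^{-m}=0$ for a constant $a$, and differentiating recovers $p''+\frac{\lambda}{m+2}p=ap^{-m-1}$, which are (\ref{mu2const1}) and (\ref{et07w}) with the constant there equal to $0$. If $a=0$ then $p''/p$ and $p'''/p'$ are constant and equal, so $\lambda_1=\lambda_2$, against the hypothesis; hence $a\neq 0$. Inserting $R=\lambda_1+2\lambda_2$ and $|\nabla w|^2=c_1^2(p')^2$ into Lemma \ref{kklem}(i) and simplifying with the first-order relation above gives $\mu=(m-1)c_1^2\{(p')^2+\frac{\lambda}{m+2}p^2+\frac{2a}{m}p^{-m}\}=0$. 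The converse is the direct check that any such $(g,w)$ has diagonal Ricci tensor with $\lambda_1\neq\lambda_2=\lambda_3$, has $\nabla w$ parallel to $E_3$, and satisfies (\ref{hpwdef}) on each frame pair.

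For completeness I would analyze $(p')^2=-\frac{\lambda}{m+2}p^2-\frac{2a}{m}p^{-m}$ by the signs of $a$ and $\lambda$, in the spirit of Lemmas \ref{26} and \ref{comple22}. When $a>0$ and $\lambda<0$ the right-hand side vanishes at a unique $p_*>0$ with $p''(p_*)>0$, so $p$ runs over $[p_*,\infty)$; the locus $p'=0$ closes up smoothly once $x_2$ is taken as an angular variable of period $2\pi/p''(0)$, and the metric extends to a complete metric on $\mathbb{R}^3$. In each remaining admissible sign case the right-hand side either admits no positive solution, or stays positive as $p\to 0$, where $\lambda_1=-2p''/p\sim -2ap^{-m-2}$ blows up at finite distance, so the metric is incomplete. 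Hence $g$ is complete exactly when $a>0$ and $\lambda<0$.
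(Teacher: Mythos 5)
Your proposal is correct and follows essentially the same route as the paper's proof: the reduction $h=Cp'$ via (\ref{relph}), the comparison of the $(E_2,E_2)$ and $(E_3,E_3)$ components of (\ref{hpwdef}) to get $w=c_1p+c_2$, the differentiate-and-eliminate step yielding $c_2\,p'(2p''+\lambda p)=0$ and hence $c_2=0$, the integration with factor $p^{m-1}p'$ to the first-order ODE, the exclusion of $a=0$, and the computation $\mu=0$ are all exactly the paper's argument. The only cosmetic difference is in the completeness analysis, where the paper maps the ODE to Kobayashi's classification (cases I.1, III.1, III.3, IV.1) and cites its earlier Lemmas for each, while you carry out the equivalent sign analysis directly; both rest on the same two facts (smooth closing at the unique root of $p'$ when $a>0$, $\lambda<0$, and curvature blow-up at finite distance otherwise).
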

\begin{proof}
 If $p$ is not a constant, then we get $p'=Ch$ from (\ref{relph}).
  As $p^{'} = C h$, after replacing $\frac{
x_2}{C}$ by new $x_2$,  the metric of {\rm (\ref{ggg22})} can be written as
\begin{equation} \label{ggg24}
 g= (p(x_3))^2 dx_1^2    + (p^{'}(x_3))^2  dx_2^2 +d x_3^2.
\end{equation}
From now the proof will be similar to that of Lemma \ref{LL61}.
 Ricci curvature components are $R_{11} :=R(E_1, E_1) = -2  \frac{p^{''}}{p}$,
$R_{22}=R_{33}  =- \frac{p^{''}}{p} - \frac{p^{'''}}{p^{'}}  $.  Use Lemma \ref{claim112na2} and (\ref{ggg216}) to get the nontrivial components of  the equation  $\nabla dw=\frac{w}{m}(Rc-\lambda g)$ as below.
\begin{align}
  \frac{p'}{p}w'=\frac{w}{m}(-2\frac{p''}{p}-\lambda)   \ \ \ \ \ \ \ \ \label{wfirst}\\
 \frac{p''}{p'}w'=\frac{w}{m}(-\frac{p''}{p}-\frac{p'''}{p'}-\lambda) \  \ \label{wsecond}\\
 w''=\frac{w}{m}(-\frac{p''}{p}-\frac{p'''}{p'}-\lambda)   \ \ \ \label{wthird}
\end{align}
By (\ref{wsecond}) and (\ref{wthird}), $w''=  \frac{p''}{p'}w'$ and we get $w=c_1 p+b_1$ for some constants $c_1\neq0$ and $b_1$. Then by (\ref{wfirst}),
\begin{align}
 mc_1(p')^2+(c_1p+b_1)(2p''+\lambda p)=0. \label{pdouble}
\end{align}
And (\ref{wthird}) gives
\begin{align}
 c_1 m p^{''}+( c_1 p+b_1 )(\frac{p''}{p}+\frac{p'''}{p'}+\lambda)=0. \label{pdouble7}
\end{align}

Differentiating (\ref{pdouble}) and comparing with (\ref{pdouble7}), we get $b_1p'(2p''+\lambda p)=0$. But if $2p''+\lambda p=0$, then $p'=0$ by (\ref{pdouble}). Thus $b_1=0$.

Multiplying $p^{m-1}p'$ to (\ref{pdouble}) and integrating, $p^m(p')^2+\frac{\lambda}{m+2}p^{m+2}+\frac{2a}{m}=0$ for a constant $a$. And
\begin{align}
  (p')^2+\frac{\lambda}{m+2}p^{2}+\frac{2a}{m}p^{-m}=0. \label{pm}
\end{align}
  Using (\ref{pm}) the Ricci eigenvalues can be computed as $ \lambda_1=m(\frac{p'}{p})^2+\lambda$, $\lambda_2=-\frac{m^2}{2}(\frac{p'}{p})^2-\frac{\lambda}{2}(m-2)$.
Assigning these values to (\ref{consteq}), we get $\mu=0$.

If $a=0$ then one can get $R_{11} =R_{22}$, a contradiction. So, $a \neq 0$.  Note that the above ODE is also identical to the Kobayahis's equation (2.2) in \cite{Ko} if the correspondence is $k \leftrightarrow 0, \ n-2 \leftrightarrow m, \ \frac{R}{n-1} \leftrightarrow \lambda $. As $a \neq 0$ and $k=0$, among Kobayashi's list we get a shorter list of
I.1, III.1, III.3 and IV.1. By a similar argument as in Sections 4,  the conditions  I.1, III.1 and III.3 do not hold a complete space.
As in the proof of Lemma \ref{26} we can show that the IV.1 case  of $a>0$, $\lambda\leq 0$ yields a complete space $(g, w)$. Actually $\lambda<0$ from (\ref{pm}).

\end{proof}

\begin{lemma} \label{L74}
  For the metric in {\rm (\ref{ggg22})}, if $p$ is a constant, then $\lambda=0$ and the metric $g$ becomes
\begin{align*}
  g=dx_1^2+(w'(x_3))^2dx_2^2+dx_3^2,
\end{align*}
and the  potential function $w$ satisfies $w=w(x_3)$ and
\begin{align}
  (w')^2+\frac{2a}{m-1}w^{1-m}=\frac{\mu}{m-1}, \ \ {\rm  for} \ {\rm  a} \ {\rm  constant} \  a\neq0.
\end{align}
Conversely, any $(g, w)$ as above  satisfies $\lambda_1\neq\lambda_2=\lambda_3$,  $\frac{\nabla w}{|\nabla w |} = E_3$ and {\rm (\ref{hpwdef})} with $\lambda=0$.

\medskip
The space $(g, w)$ is complete precisely when $a>0$,  in which case $\mu >0$.
\end{lemma}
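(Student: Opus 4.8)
The plan is to use the hypothesis that $p$ is constant to force a product structure in the $x_1$-direction, which immediately pins down $\lambda$. First I would rescale $x_1$ so that $p\equiv 1$, giving $g=dx_1^2+h^2(x_3)\,dx_2^2+dx_3^2$, a Riemannian product $\mathbb{R}\times(N,\,h^2dx_2^2+dx_3^2)$ with $E_1=\partial_1$. Since the $E_1$-factor is flat, $\lambda_1=R(E_1,E_1)=0$. On the other hand $\zeta_1=\frac{d}{dx_3}(\ln p)=0$ by (\ref{ggg216}), so the identity $\zeta_1=\frac{w}{m}\frac{R(E_1,E_1)-\lambda}{|\nabla w|}$ of Lemma \ref{claim112na2} forces $\lambda_1=\lambda$, whence $\lambda=0$. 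Because $E_1w=E_2w=0$ we also get $w=w(x_3)$.

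Next I would extract the governing equations. Using Lemma \ref{claim112na2} to compute the Hessian, the $(E_3,E_3)$ and $(E_2,E_2)$ components of (\ref{hpwdef}) (with $\lambda=0$) read $w''=-\frac{w}{m}\frac{h''}{h}$ and $\frac{h'}{h}w'=-\frac{w}{m}\frac{h''}{h}$; subtracting gives $w''=\frac{h'}{h}w'$, hence $w'=Ch$ for a nonzero constant $C$. Absorbing $C$ into $x_2$ puts $g$ in the stated form $dx_1^2+(w')^2dx_2^2+dx_3^2$, so that $h=w'$. Writing $R=\lambda_1+2\lambda_2=-2\,w'''/w'$ and substituting into the conserved quantity $\mu$ of Lemma \ref{kklem} (equation (\ref{consteq})) yields $2ww''+(m-1)(w')^2=\mu$; multiplying by $w^{m-2}w'$ and integrating produces exactly $(w')^2+\frac{2a}{m-1}w^{1-m}=\frac{\mu}{m-1}$ for a constant $a$, which upon differentiation is equivalent to $w''=aw^{-m}$. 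If $a=0$ then $w''=0$, forcing $h''=0$ and $\lambda_2=0=\lambda_1$, contradicting $\lambda_1\neq\lambda_2$; hence $a\neq0$. The converse is a direct verification that such a $(g,w)$ has $\lambda_1=0\neq\lambda_2=maw^{-m-1}$, satisfies $\nabla w/|\nabla w|=E_3$, and solves (\ref{hpwdef}) with $\lambda=0$.

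For completeness I would note that $g=dx_1^2+\tilde g$ with $\tilde g=(w')^2dx_2^2+dx_3^2$, so the $x_1$-factor is complete and everything reduces to $\tilde g$; this is precisely the $\rho=0$ instance of (\ref{koeq}), so I would run the argument of Lemma \ref{26}. When $a>0$, existence of a positive solution forces $\mu>0$, since $(w')^2=\frac{1}{m-1}(\mu-2aw^{1-m})\geq0$ fails for small $w$ unless $\mu>0$; one then has $w\geq w_0=(2a/\mu)^{1/(m-1)}>0$, and $w''=aw^{-m}>0$ makes $w$ strictly convex with a single critical point, where $w''(0)=aw_0^{-m}>0$. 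Regarding $(x_3,x_2)$ as polar coordinates and taking the $x_2$-period $2\pi/w''(0)$ caps $\tilde g$ off smoothly to a complete metric on $\mathbb{R}^2$, so $g$ is complete on $\mathbb{R}^3$.

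The hardest part will be ruling out $a<0$. Here $w''=aw^{-m}<0$ makes $w$ strictly concave and positive, so on at least one side $w$ must decrease to $0$; along that side $(w')^2=\frac{1}{m-1}(\mu-2aw^{1-m})\to+\infty$, yet the arclength $\int \frac{dw}{|w'|}\sim\int_0 w^{(m-1)/2}\,dw$ converges, so the boundary $\{w=0\}$ lies at finite distance, while $\lambda_2=maw^{-m-1}\to-\infty$ there. Thus the curvature blows up at finite distance and $g$ cannot be complete. Combining the two cases with $a\neq0$, the space $(g,w)$ is complete precisely when $a>0$, and then $\mu>0$.
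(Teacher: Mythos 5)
Your proof is correct and follows essentially the same route as the paper: $\zeta_1=0$ forces $\lambda=\lambda_1=0$, the two Hessian components of {\rm (\ref{hpwdef})} give $h=cw'$, integration produces the first-order ODE with $a\neq 0$ excluded via $\lambda_1=\lambda_2$, and completeness for $a>0$ is settled by the polar-coordinate capping as in Lemma \ref{26}. The only departures are cosmetic: you obtain the first integral directly from (\ref{consteq}) (where the paper integrates $mw'w''+ww'''=0$ to $w^mw''=a$ and then identifies the integration constant with $\mu$), and you rule out $a<0$ by a self-contained concavity/curvature blow-up argument where the paper instead cites Kobayashi's case list and the argument of Lemma \ref{LL3k}.
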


\begin{proof}
 If $p$ is a constant, by (\ref{ggg216}) $\zeta_1=0$. And $R_{11}=0$. By $\zeta_1=\frac{w}{m|\nabla w|}(\lambda_1-\lambda)$, we get $\lambda=0$. From (\ref{ggg22}), $g$ can be regarded as a product metric $g=dx_1^2+\tilde{g}$ where $\tilde{g}=h(x_3)^2dx_2^2 +dx_3^2$.

From Lemma \ref{claim112na2} and (\ref{ggg216}),  the nontrivial components of  the equation  $\nabla dw=\frac{w}{m}(Rc-\lambda g)$ are as below.
\begin{align}
  w''=-\frac{w}{m}\frac{h^{''}}{h} \  \ \label{wsecond4}\\
  \frac{h'}{h}w'=-\frac{w}{m}\frac{h^{''}}{h} \label{wthird4}
\end{align}
 From the above, $w''=
  \frac{h'}{h}w'$ and we get $h= c w^{'}$ for a nonzero constant $c$. From (\ref{wsecond4}) $mw^{'} w^{''} + w w^{'''}=0$, which can be integrated to $w^m w^{''} = a$  for a constant $a.$  Integrating this, for a constant $\mu_0$ we get $(w')^2+\frac{2a}{m-1}w^{1-m}=\frac{\mu_0}{m-1}.$ From this and $R_{22}=R_{33} = -\frac{w^{'''}}{w^{'}}$, one can see that $\mu_0$ is the constant $\mu$ of (\ref{consteq}). So we write;
 \begin{align} \label{ww2}
  (w')^2+\frac{2a}{m-1}w^{1-m}=\frac{\mu}{m-1}.
\end{align}
And if $a=0$, then $R_{11}=R_{22}$, a contraduction. So, $a \neq 0$.

After replacing $c x_2$ by new $x_2$,
the metric $g$ can written as  $g=dx_1^2+w'(x_3)^2dx_2^2+dx_3^2$.

(\ref{ww2}) is  identical to the Kobayashi's equation (2.2) in \cite{Ko} if the correspondence is $k \leftrightarrow \frac{\mu}{m-1}, n-2 \leftrightarrow m-1, R \leftrightarrow 0 $. As $a \neq 0$ and $R=0$, among Kobayashi's list we get a shorter list of
III.1 and  IV.1. As in Lemma \ref{LL3k}, the condition III.1 cannot hold a complete space.

By a similar argument as in Lemma \ref{26} we can show that the IV.1 case  of $a>0$ yields a complete space $(g, w)$.
In this case, $\mu>0$ from (\ref{ww2}).
\end{proof}

We combine Lemma \ref{cl02} and \ref{L74} and state
\begin{prop} \label{cl025}
Let  $(M,g,w)$ be  a $(\lambda,3+m)$-Einstein manifold.
 Assume $ E_3=\frac{\nabla w}{|\nabla w |}$ for an adapted frame field $\{E_i\}$ on an open subset of $ \{ \nabla w \neq 0  \}$.

Then there exist coordinates $(x_1, x_2, x_3)$ locally in which the space $(g, w)$ takes one of the following forms;

\medskip
{\rm (i-3)} $g=dx_1^2+(w'(x_3))^2dx_2^2+dx_3^2$ and the potential function $w=w(x_3)$ is a non-constant positive solution of  $(w')^2+\frac{2a}{m-1}w^{1-m}=\frac{\mu}{m-1}$ for a constant $a\neq0$. The constant $\lambda=0$.
 The space $(g,w)$ is complete precisely when $a>0$. In this complete case $\mu >0$.

\medskip
{\rm (ii-3)} $g=(p(x_3))^2dx_1^2+(p'(x_3))^2dx_2^2+dx_3^2$
where $p:=p(x_3)$ is a non-constant positive solution of $(p')^2+\frac{\lambda}{m+2}p^2+\frac{2a}{m}p^{-m}=0$ for a constant $a\neq0$. And $w=c_1p$ for a constant $c_1>0$. And $\mu=0$.  The space $(g,w)$ is complete precisely when $a>0$, $\lambda < 0$.

 Conversely, any space $(g, w)$ in {\rm (i-3)} or {\rm (ii-3)}   satisfies $\lambda_1=R(E_1, E_1) \neq\lambda_2=\lambda_3$,  $\frac{\nabla w}{|\nabla w |} = E_3$ and {\rm (\ref{hpwdef})}.

\end{prop}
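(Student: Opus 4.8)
The plan is to present this proposition as the consolidation of the two preceding lemmas, Lemma \ref{cl02} and Lemma \ref{L74}, whose union of hypotheses already covers the present situation. First I would invoke the structural analysis carried out just before the statement: under $E_3=\frac{\nabla w}{|\nabla w|}$, Lemma \ref{threesolb12} and Lemma \ref{claim112na2} fix the Levi-Civita connection in the adapted frame, show that the three $2$-dimensional distributions $E_{ij}$ are integrable, and make $\zeta_1,\zeta_2$ constant on each level set $\Sigma_c$, hence functions of $x_3$ alone. Feeding integrability into Lemma \ref{claim112b8} and performing the rescalings that lead to (\ref{ggg22}) brings the metric to the normal form $g=p^2(x_3)\,dx_1^2+h^2(x_3)\,dx_2^2+dx_3^2$ with $\zeta_1=(\ln p)'$ and $\zeta_2=(\ln h)'$ as in (\ref{ggg216}). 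The eigenvalue coincidence $\lambda_2=\lambda_3$ is then equivalent to $R_{2112}=R_{3113}$, which is exactly the relation (\ref{relph}).

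Next I would split on whether the warping function $p$ in the $dx_1^2$ slot is constant, since this dichotomy is precisely what separates Lemma \ref{L74} from Lemma \ref{cl02}. If $p$ is non-constant, (\ref{relph}) integrates to $p'=Ch$, so after absorbing the constant into $x_2$ the metric becomes $g=p^2\,dx_1^2+(p')^2\,dx_2^2+dx_3^2$; Lemma \ref{cl02} then evaluates (\ref{hpwdef}) on the adapted frame to produce $w=c_1p$, the ODE $(p')^2+\frac{\lambda}{m+2}p^2+\frac{2a}{m}p^{-m}=0$ with $a\neq0$, and $\mu=0$, which is case (ii-3). If instead $p$ is constant, then $\zeta_1=0$ and $R_{11}=0$, so the identity $\zeta_1=\frac{w}{m|\nabla w|}(\lambda_1-\lambda)$ forces $\lambda=0$; the metric degenerates to the product $g=dx_1^2+h^2(x_3)\,dx_2^2+dx_3^2$, and Lemma \ref{L74} yields $h=cw'$, the ODE $(w')^2+\frac{2a}{m-1}w^{1-m}=\frac{\mu}{m-1}$ with $a\neq0$, which is case (i-3). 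In each case the completeness assertion comes from matching the scalar ODE to Kobayashi's equation (2.2) in \cite{Ko} and rerunning the argument of Lemma \ref{26}: the shortened sublist is \{I.1, III.1, III.3, IV.1\} for (ii-3) and \{III.1, IV.1\} for (i-3), and only the IV.1 branch survives, giving $a>0$ (and additionally $\lambda<0$ for (ii-3)).

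The converse is immediate, since each explicit pair $(g,w)$ listed was already checked inside the converse parts of Lemma \ref{cl02} and Lemma \ref{L74} to satisfy $\lambda_1\neq\lambda_2=\lambda_3$, $\frac{\nabla w}{|\nabla w|}=E_3$, and (\ref{hpwdef}). There is no genuine obstacle beyond bookkeeping: the substantive work---reducing the tensor equation (\ref{hpwdef}) to the scalar ODEs and extracting completeness from Kobayashi's list---has already been done in the two lemmas. The only points needing care are to verify that the constant versus non-constant dichotomy for $p$ exhausts all possibilities and that the labels (i-3), (ii-3) are assigned so as to line up with the corresponding cases in Theorem \ref{maint1} and Theorem \ref{maint2}.
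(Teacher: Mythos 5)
Your proposal is correct and follows essentially the same route as the paper: the paper states Proposition \ref{cl025} immediately after the sentence ``We combine Lemma \ref{cl02} and \ref{L74}'', so its proof is precisely the consolidation you describe --- the normal form (\ref{ggg22}) with (\ref{ggg216}) and (\ref{relph}) from Lemmas \ref{threesolb12}, \ref{claim112na2} and \ref{claim112b8}, the dichotomy on whether $p$ is constant, and the Kobayashi-list completeness analysis (with exactly the sublists $\{$I.1, III.1, III.3, IV.1$\}$ and $\{$III.1, IV.1$\}$ you cite, only IV.1 surviving). Your two points of care --- exhaustiveness of the dichotomy and the labeling of (i-3)/(ii-3) --- are also handled the same way in the paper, so nothing is missing.
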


\section{Warped product Einstein manifolds and proofs of theorems}

In this section we explain the warped product Einstein manifolds associated to the complete $(\lambda,3+m)$-Einstein manifolds  with $\lambda_1\neq \lambda_2 =\lambda_3$.
Recall that two dimensional complete  $(\lambda,2+m)$-Einstein manifolds and their  warped product Einstein manifolds are descibed in 9.118 of Besse \cite{Be}. Some four dimensional work can be found in \cite{SJ}.
By comparing these, we find that all these works are closely related to  Kobayashi's static spaces.

We discuss the complete warped product Einstein manifolds out of Proposition \ref{pr47}, \ref{pr56} and \ref{cl025}.

\subsection{Complete spaces  out of  Proposition \ref{pr47} } \label{se1}

A complete  $(\lambda,3+m)$-Einstein manifold $(g,w)$ in Proposition \ref{pr47} is of the form

  $g=dx_1^2+(\eta(x_1))^2 (p'(x_3))^2dx_2^2+(\eta(x_1))^2dx_3^2$ and $w= \eta \cdot p$ satisfying  (\ref{koeq2}) and (\ref{koeq2v}). We have the associated warped product Einstein metric as follows; for a complete Einstein manifold $(F, g_F)$ with $Rc_{g_F} =  \mu g_F$,
\begin{equation} \label{440}
g_E  = dx_1^2+(\eta(x_1))^2 (p'(x_3)^2dx_2^2+dx_3^2+ p^2 g_F).
\end{equation}
We observe that  $g_0:=(p'(x_3))^2dx_2^2+dx_3^2+ p^2 g_F$ is an Einstein metric with $Rc_{g_0} =  \rho  (m+1) g_0$.
In fact, the metric $dx_3^2+ p^2 g_F$ satisfying (\ref{koeq2}) is a static space with static potential function $p^{'}$ by Kobayashi's study \cite{Ko}, so $g_0$ is Einstein.
 Now $\eta$ satisfy (\ref{koeq2v}), so one can reassure that
 $g_E  = dx_1^2+\eta^2g_0$ is  an Einstein metric with $Rc_{g_E} =  \lambda  g_E$.

\medskip

To have a complete space, we are under one of the following cases;

{\rm (i-1)}  $\lambda<0$, $a>0$, $\rho  \leq 0$

{\rm (i-2)} $\lambda<0$, $a<0$, $\rho <0$, $\mu\leq (m+1)\rho (\frac{a}{ \rho })^{\frac{2}{m+1}}$, $p(0)>(\frac{a}{ \rho })^{\frac{1}{m+1}}$.

\medskip

By the proof of Lemma \ref{26}, under {\rm (i-1)} $g_E$ can be smoothly defined on $\mathbb{R}^3\times F=\{ (x_1, x_2, x_3, z) \ |  \  x_1 \in \mathbb{R}, x_2 \in [0, \frac{2 \pi}{p{''}(0)}], x_3 \geq 0, z \in F   \}$ where $(x_3, x_2)$ is regarded as the polar coordinates of a plane.   The condition {\rm (i-1)} does not contain a restriction on the sign of $\mu$.

By the proof of Lemma \ref{comple22}, under {\rm (i-2)} with $\mu< \kappa_0=(m+1)\rho (\frac{a}{ \rho })^{2/(m+1)}$,  $g_E$ is on $\mathbb{R}^3\times F$ where $(x_3, x_2)$, $ x_3 \geq 0$, is the plane-polar coordinates. But under {\rm (i-2)} with $\mu=\kappa_0$, $g_E$ is on $\mathbb{R}^3\times F$ where $(x_1, x_2, x_3)$ is the standard coordinates of $\mathbb{R}^3$.

\subsection{Complete spaces  out of  Proposition \ref{pr56} } \label{se2}

A complete  $(\lambda,3+m)$-Einstein manifold $(g,w)$ in Proposition \ref{pr56} is of the form
  $g =  p^2(x_3)dx_1^2 +     (p^{'})^2 dx_2^2+ dx_3^2$ and  $w = {\tau}(x_1) p(x_3)$,
 satisfying (\ref{mu2const1v}) and
(\ref{mu2const1v5}).
We have the associated warped product Einstein metric as follows; for a complete Einstein manifold $(F, g_F)$ with $Rc_{g_F} =  \mu g_F$,
\begin{equation} \label{440b}
g_E  = (p(x_3))^2(dx_1^2 + {\tau(x_1)}^2 g_F)+     (p^{'})^2 dx_2^2+ dx_3^2.
\end{equation}
The metric $\tilde{g}:=dx_1^2 + {\tau}^2 g_F$ is Einstein with $Rc_{\tilde{g}}= km\tilde{g}$ because of (\ref{mu2const1v5}).
 Then by (\ref{mu2const1v}),   $g_0:= p^2(x_3)(dx_1^2 + {\tau}^2 g_F)+ dx_3^2$ is a static space with  potential function $p^{'}$; see  \cite{Ko}.
 So one can reassure that
 $g_E  = g_0 +     (p^{'})^2 dx_2^2$ is  an Einstein metric with $Rc_{g_E} =  \lambda  g_E$.

 \medskip
To have a complete space, we are under one of the following cases;

(ii-1) $a>0$, $\lambda\leq 0$

(ii-2) $a<0$, $\lambda<0$, $p(0)>\left\{\frac{a(m+2)}{ \lambda}\right\}^{\frac{1}{m+1}}$, $k\leq \kappa_0= \frac{\lambda}{m}\left\{\frac{a(m+2)}{ \lambda}\right\}^{\frac{2}{m+1}}.$

\medskip
We shall see that depending on conditions, the underlying manifold of $g_E$ can vary:  $\mathbb{R}^{2} \times  \mathbb{S}^{m+1}$, $\mathbb{R}^{m+3}$ or $\mathbb{R}^3 \times F $.

Under {\rm (ii-1)}, as the above {\rm (i-1)} case, $(x_3, x_2), x_3 \geq 0,$ is regarded as the plane-polar coordinates;

\medskip
$\bullet \ $   If {\rm (ii-1)} holds and $k>0$, then $\mu >0$ and  we can write

 $g_E=p^2(dx_1^2 +\frac{\mu \sin^2( \sqrt{k} x_1 )}{(m-1)k}g_F)+     (p^{'})^2 dx_2^2+ dx_3^2$
with $Ric_{g_E} \leq 0$.  $g_F$ should be a round spherical metric and then $dx_1^2 +\frac{\mu \sin^2( \sqrt{k} x_1 )}{(m-1)k}g_F$ should have  positive constant curvature. So, $g_E$ is defined on $\mathbb{S}^{m+1} \times  \mathbb{R}^{2}$.
In particular, we have the Riemannian Schwarzschild Ricci flat metric when $Ric_{g_E}=0$.

\medskip

$\bullet \ $    If {\rm (ii-1)} holds and $k=0$, then $\mu >0$ and

 $g_E=p^2(dx_1^2 + \frac{\mu}{m-1}x_1^2g_F)  +  (p^{'})^2 dx_2^2+ dx_3^2$
with $Ric_{g_E} < 0$.  $g_F$ should be a round spherical metric and then   $dx_1^2 + \frac{\mu}{m-1}x_1^2g_F$ should have  zero constant curvature. So, $g_E$ is defined on $\mathbb{R}^{m+3}$.

\medskip

$\bullet \ $    If {\rm (ii-1)} holds and  $k<0$, then $\lambda<0$.
Depending on the sign of  $\mu$, we may write as below.
\begin{equation} \label{651}
g_E = p^2(dx_1^2+
\frac{\mu \sinh^2(\sqrt{-k}x_1)}{(m-1)(-k)}g_F) +     (p^{'})^2 dx_2^2+ dx_3^2  \ {\rm on} \ \mathbb{R}^{m+3}   \ {\rm if} \ \mu> 0.
\end{equation}
where $g_F$ should be a round spherical metric with $Rc_{g_F}  = -(m -1) k g_F$.
 \begin{equation} \label{652}
g_E = p^2(dx_1^2+e^{\pm 2\sqrt{-k} x_1 }g_F) +     (p^{'})^2 dx_2^2+ dx_3^2  \ {\rm on} \ \mathbb{R}^3\times F, \ {\rm if} \ \mu= 0.
\end{equation}
  \begin{equation} \label{653}
g_E = p^2(dx_1^2+ \frac{\mu \cosh^2(\sqrt{-k}x_1)}{(m-1)k}g_F) +     (p^{'})^2 dx_2^2+ dx_3^2 \ {\rm on} \ \mathbb{R}^3\times F, \  {\rm if} \ \mu< 0.
\end{equation}

$\bullet \ $   If {\rm (ii-2)} holds, then as $k<0$, ${\tau}$ can be defined on $\mathbb{R}$ with any sign of $\mu$. $Ric_{g_E} < 0$. The description of $g_E$ would be similar to (\ref{651})$\sim$(\ref{653}) although the behaviour of $p$ and roles of $x_2, x_3$ can be different.

\subsection{Complete spaces  out of  Proposition \ref{cl025} } \label{se3}
$   $

\medskip
$\bullet$  A complete  $(\lambda,3+m)$-Einstein manifold $(g,w)$ in Proposition \ref{cl025} {\rm (i-3)}  is of the form
 $g=dx_1^2+(w'(x_3))^2dx_2^2+dx_3^2$ with $(w')^2+\frac{2a}{m-1}w^{1-m}=\frac{\mu}{m-1}$ with $a>0$.
Here $\lambda=0$ and $\mu >0$.
Then $g_E=dx_1^2+w'(x_3)^2dx_2^2+dx_3^2 + w(x_3)^2 g_F$ on $\mathbb{R}^3\times F$ with $Ric_F>0$ and $Ric_{g_E}= 0$.
This class contains $\mathbb{R} \times Sc$, where $Sc$ denotes the Riemannian Schwarzschild metric.
The case 9.118 (a) of \cite{Be} corresponds to this class.

\medskip
$\bullet$ A complete  $(\lambda,3+m)$-Einstein manifold $(g,w)$ in Proposition \ref{cl025} {\rm (ii-3)}  is of the form  $g=(p(x_3))^2dx_1^2+(p'(x_3))^2dx_2^2+dx_3^2$
with $(p')^2+\frac{\lambda}{m+2}p^2+\frac{2a}{m}p^{-m}=0$ when $a>0$, $\lambda < 0$. And $w=c_1p$ for a constant $c_1>0$. And $\mu=0$.
Then $g_E=(p(x_3))^2 (dx_1^2 + g_F )+(p'(x_3))^2dx_2^2 +dx_3^2$ on $\mathbb{R}^3\times F$  with $Ric_F=0$.  $Ric_{g_E} < 0$. This can be viewed as the special case $k=\mu=0$ of (\ref{440b}).

\subsection{Proofs of Theorems}

Here we finish the proofs of  Theorem \ref{maint1} and \ref{maint2}   by summarizing most of the above discussions.

\smallskip
{\bf Proof of Theorem \ref{maint1}:}
 Lemma \ref{e1mu2not0} and Proposition \ref{cl025} (i-3) can be put together to yield Theorem \ref{maint1} (1), while
Lemma \ref{LL61} and Proposition \ref{cl025} (ii-3)
yield (2). Lemma \ref{claim112b3} gives (3).  We returned to m-QE manifolds via the transformation $w= e^{-\frac{f}{m}}$.

\medskip

{\bf Proof of Theorem \ref{maint2}:}
Now we put together the discussions in  subsections \ref{se1}$\sim$\ref{se3}.
Subsection \ref{se1} yields  (i-1) and (i-2) in Theorem \ref{maint2} .
The first case in  \ref{se3} yields (i-3).  And  \ref{se2} gives (ii-1) and (ii-2).
The second case in \ref{se3} is just absorbed into (ii-1).

\medskip
 Under some conditions on constants $\lambda, \rho, a,\mu,k$,
 one may get compact singular spaces of the form (1) or (2) in Theorem \ref{maint1}, but not smooth ones.
Meanwhile, concerning (3) of  Theorem \ref{maint1}, we recall
 B\"{o}hm's remarkable collection of compact warped product Einstein manifolds \cite{Bo} of the form {\rm (iii)} in  Theorem \ref{maint2}.


\begin{thebibliography}{99}

\bibitem{BRS} Barros, A.,  Ribeiro, E., Jr., Silva Filho, J.: Uniqueness of quasi-Einstein metrics on 3-dimensional homogeneous manifolds. Differential Geom. Appl.  35, 60-73 (2014)

\bibitem{Be}  Besse, A.L.: Einstein manifolds. Ergebnisse der Mathematik, 3 Folge, Band 10, Springer-Verlag (1987)

\bibitem{Bo} B\"{o}hm, C.: Inhomogeneous Einstein metrics on low-dimensional spheres and other low-dimensional spaces.
Invent. math. 134, 145-176 (1998)


\bibitem{CSW}  Case, J.,  Shu, Y.-J., Wei, G.: Rigidity of quasi-Einstein metrics. Differential Geom. Appl. 29, no. 1,  93-100 (2011)


\bibitem{Cat}  Catino, G.: A note on four dimensional (anti-)self-dual quasi-Einstein manifolds. Differential Geom. Appl. 30, 660-664 (2012)

\bibitem{CMM} Catino, G.,  Mantegazza, C.,  Mazzieri,  L.:   A note on Codazzi tensors.  Math. Ann. 362, No. 1-2, 629-638 (2015)


\bibitem{CMMR} Catino, G.,  Mantegazza, C.,  Mazzieri, L.,  Rimoldi, M.: Locally conformally flat quasi-Einstein manifolds. J. Reine Angew. Math. 675,   181-189 (2013)

\bibitem{CH}  Chen, Q.,  He,  C.: On Bach-flat warped product Einstein manifolds. Pacfic J. Math. 265, no.2, 313-326 (2013)

\bibitem{De}  Derdzi\'{n}ski, A.:  Classification of Certain Compact Riemannian Manifolds
with Harmonic Curvature and nonparallel Ricci Tensor. Math. Z. 172, no. 3,  273-280 (1980)


\bibitem{DS}  Derdzi\'{n}ski, A.,  Shen, C.L.:  Codazzi Tensor Fields, Curvature and Pontryagin Forms. Proc. London Math. Soc. (3) 47, no. 1, 15-26 (1983)


\bibitem{HPW} He,  C.,  Petersen, P.,  Wylie, W.:  On the classification of warped product Einstein metrics.
Comm. Anal. Geom. 20, no. 2, 271-311 (2012)


\bibitem{Ki}  Kim, J.: On a classification of 4-d gradient Ricci solitons with harmonic Weyl curvature. J. Geom. Anal. 27, no. 2,  986-1012 (2017)


\bibitem{KK}  Kim, D.S.,  Kim,  Y.H.: Compact Einstein warped product spaces with nonpositive scalar curvature. Proc. Amer. Math. Soc. 131, no. 8, 2573-2576 (2003)

\bibitem{Ko}  Kobayashi, O.:  A differential equation arising from scalar curvature function. J. Math. Soc. Japan 34, no. 4, 665-675 (1982)

\bibitem{MT} Miao, P., Tam, L. F.:  Einstein and conformally flat critical metrics of the volume functional.
Trans. Amer. Math. Soc.  363,  no. 6, 2907-2937 (2011)

\bibitem{Pa}  Palais, R. S. (with Derdzinski, A.,  Terng, C. L.): Warped products and Einstein Manifolds, Proceedings of Diﬀerential Geometry Meeting, M\"{u}nster (1982), 44-47



\bibitem{Pe} Petersen, P.: Riemannian Geometry, Graduate Texts in Mathematics 171, Springer-Verlag New York (2006)


\bibitem{RR}  Ranieri, M.,  Ribeiro, E., Jr.: Bach-flat noncompact steady quasi-Einstein manifolds,
Arch. Math. (Basel) 108, no. 5,  507-519 (2017)


\bibitem{SJ} Shin, J.:
On the classification of 4-dimensional ($m, \rho$)-quasi-Einstein manifolds with harmonic Weyl curvature.
Ann. Global Anal. Geom.  51,  no. 4, 379-399  (2017)

\bibitem{Wa1} Wang, L. F.:  Rigid properties of quasi-Einstein metrics.
Proc. Amer. Math. Soc.  139,  no. 10, 3679-3689 (2011)


\end{thebibliography}
\end{document}